\newcommand{\Cc}{\mathbb{C}}
\newcommand{\Zz}{\mathbb{Z}}
\newcommand{\Pp}{\mathbb{P}}
\newcommand{\Qq}{\mathbb{Q}}
\newcommand{\Fp}{\mathbb{F}}
\def\sums{\mathop{\sum \Bigl.^{*}}\limits}
\newcommand{\ii}{\mathrm{i}} 
\newcommand{\dd}{\mathrm{d}} 
\newcommand{\ee}{\mathrm{e}} 
\DeclareMathOperator{\GL}{GL}
\DeclareMathOperator{\Frob}{Frob}
\DeclareMathOperator{\Gal}{Gal}
\DeclareMathOperator{\Aut}{Aut}
\DeclareMathOperator{\Sym}{Sym}
\DeclareMathOperator{\rank}{rank}
\theoremstyle{plain}
\newtheorem{theorem}{Theorem}[section]
\newtheorem{corollary}[theorem]{Corollary}
\newtheorem{proposition}[theorem]{Proposition}
\newtheorem{lemma}[theorem]{Lemma}
\theoremstyle{definition}
\newtheorem{definition}[theorem]{Definition}
\newtheorem{remark}[theorem]{Remark}
\begin{document}
\title{Prime number races for elliptic curves\\ over function fields}
\author{Byungchul Cha, Daniel Fiorilli, and Florent Jouve}
\address{Department of Mathematics and Computer Science, Muhlenberg College, 2400 Chew st., Allentown, PA 18104, USA}
\email{cha@muhlenberg.edu}
\address{Department of Mathematics and Statistics, University of Ottawa, 
585 King Edward Ave, Ottawa, Ontario, K1N 6N5, Canada}
\email{daniel.fiorilli@uottawa.ca}
\address{D\'epartement de Math\'ematiques\\ B\^atiment 425\\ Facult\'e des Sciences d'Orsay\\ Universit\'e Paris-Sud 11\\
F-91405 Orsay Cedex, France}
\email{florent.jouve@math.u-psud.fr}


\begin{abstract}
We study the prime number race for elliptic curves over the function field of a proper, smooth and geometrically connected curve over a finite field. This constitutes a function field analogue of prior work by Mazur, Sarnak and the second author. In this geometric setting we can prove unconditional results whose counterparts in the number field case are conditional on a Riemann Hypothesis and a linear independence hypothesis 
on the zeros of the implied $L$-functions.
Notably we show that in certain natural families of elliptic curves, the bias generically dissipates
 as the conductor grows. This is achieved by proving a central limit theorem and combining it 
 with generic linear independence results that will appear in a separate paper.
Also we study in detail a particular family of elliptic curves that have been considered by Ulmer. 
In contrast to the generic case we show that the race exhibits very diverse outcomes, some of which are believed to be impossible in the number field setting. Such behaviors are possible in the function field case because the zeros of Hasse-Weil $L$-functions for those elliptic curves can be proven to be highly dependent among themselves, which is a very non generic situation. 
 \end{abstract}

\maketitle

\section{Introduction and statement of the main results}
\subsection{Background}

It was first noticed by Chebyshev that primes are biased in their distribution modulo $4$, in that there seems to be more primes of the form $4n+3$ than of the form $4n+1$ in initial intervals of the integers. A number of papers have been written on this phenomenon and its generalizations, and it is now known that such a bias appears in many number theoretical contexts, such as primes in arithmetic progressions, Frobenius elements in conjugacy classes of the Galois group of extensions of number fields, Fourier coefficients of modular forms, prime polynomials in residue classes over $\mathbb F_q(t)$, and so on. 

In their seminal paper \cite{RS94}, Rubinstein and Sarnak have given a framework to study questions of this type. One of the features of their work is the quantification of the so-called \emph{Chebyshev bias} in terms of an associated measure which is expressed using an explicit formula as a function of the nontrivial zeros of the involved $L$-functions.

In the case of Chebyshev's original question, Rubinstein and Sarnak determined that the logarithmic density\footnote{The logarithmic density of a set $S\subset \mathbb N$ is defined by $\delta(S):=\displaystyle\lim_{N\rightarrow \infty} \frac 1{\log N} \sum_{\substack{ n\leq N \\ n\in S}} \frac 1n$, if this limit exists.} of the set of $x$ for which $\pi(x;4,3)>\pi(x;4,1)$ exists and is given by $\delta(4;3,1) \approx 0.9959$. (Here, $\pi(x; q, a)$ is the count of primes $\le x$ that are congruent to $a$ modulo $q$.) Their results are conditional on the Generalized Riemann Hypothesis (GRH), and on the assumption (the Linear Independence hypothesis, or LI in short) that the multiset of (the ordinates of) all nontrivial zeros of the involved $L$-functions is linearly independent over $\mathbb Q$. One might think that the modulus $4$ is not exceptional here, and that there should exist other moduli $q$ and residue classes $a$ and $b$ modulo $q$ such that $\delta(q;a,b)$, the logarithmic density of the set of $x\geq 1$ for which $\pi(x;q,a)>\pi(x;q,b)$, is also very close to $1$. It turns out that as Rubinstein and Sarnak have shown, $\delta(q;a,b)$ approaches $\frac 12$ as $q\rightarrow \infty$, hence races of large moduli are very moderately biased. One can also quantify the rate of convergence here, showing for example as in \cite{FM13} that whenever $\delta(q;a,b)\neq\frac 12$, we have $|\delta(q;a,b)-1/2|=q^{-\frac 12+o(1)}$.

In the recent paper \cite{Fio12}, the second author  considered the more general race between two subsets $A$ and $B$ of the invertible reduced residues modulo $q$. It turns out that when studying the inequality $ \pi(x;q,A)> \pi(x;q,B) $ with
\[ \pi(x;q,A) := \sum_{a \in A} \pi(x;q,a), \]
things can become dramatically different from the previous case where only two residue classes were involved. Indeed, one can show under GRH and a multiplicity assumption on the zeros of $L(s,\chi)$ that there exist sequences of moduli $\{q_k\}$ and subsets $\{A_k\}$ and $\{B_k\}$ such that the associated lower and upper densities $\underline{\overline{\delta}} (q_k;A_k,B_k)$ get arbitrarily close to $1$. 
(Note also that it is known $\overline{\delta}(q;A,B) < 1$ for any $q, A, B$.) In other words, there exist `highly biased prime number races'. Under the additional assumption that LI holds, it is also proven that in order to obtain highly biased prime number races, the moduli $q_k$ need to have many prime factors, and hence highly biased prime number races are very rare.
Most races are very moderately biased, in the sense that $\delta(q;A,B)$ is usually very close to $\frac 12$.

In the context of elliptic curves, Mazur \cite{Maz08} introduced the race between the primes for which $a_p(E)$, the trace of the Frobenius at a prime $p$, is positive, against those for which $a_p(E)$ is negative. Sarnak's framework\footnote{Granville independently worked out the link between these types of prime number races and the distribution of the zeros of Hasse-Weil $L$-functions, and explained it to the second author.} in \cite{Sar07} to study this question turned out to be very effective, and explained this race very well in terms of the zeros (and potential poles) of $L(\Sym^n\!E,s)$, the symmetric power $L$-functions attached to $E$, conditional on a Riemann Hypothesis and LI. Sarnak also remarked that one can study a related race by focusing on the sign of the summatory function of $a_p(E)/\sqrt p$ using the zeros of $L(E,s)$ alone. For this race, Sarnak uncovered the influence of the analytic rank of $E$ on the bias.

Building on Sarnak's work, the second author studied in \cite{Fio13} the following question: is it possible to find highly biased prime number races in the context of elliptic curves over $\mathbb Q$, or are all races of this type only moderately biased? It turns out that conditionally on a Riemann Hypothesis and the assumption that 
the multiplicity of nonreal zeros of $L(E,s)$ is uniformly bounded (which is referred to as a bounded multiplicity assumption), the key to finding such races is to find curves $E$ whose analytic rank is significantly larger than $\sqrt{\log N_E}$, where $N_E$ is the conductor of $E$. Interestingly, the two existing conjectures (stated in~\cite{FGH07} and~\cite{Ulm02}) on the growth of the rank of elliptic curves over $\mathbb Q$ both imply the existence of the aforementioned curves. Note also that elliptic curves of large rank are extremely rare. It is widely believed that $100\%$ of the elliptic curves over $\mathbb Q$ have rank either $0$ or $1$, depending on the root number of $L(E,s)$. One can show, as is explained in \cite{Sar07}, that the bias for such curves dissipates as $N_E \rightarrow \infty$. Hence, highly biased elliptic curve prime number races over $\Qq$ are very rare. 

Coming back to the original Chebychev bias, but in the function field setting, the first author showed in~\cite{Cha08} that the framework of Rubinstein and Sarnak can be replicated in the one-variable polynomial ring $\mathbb{F}_q[t]$ over a finite field $\mathbb{F}_q$ of $q$ elements. One of the advantages in this setting is that much more is known about the (inverse) zeros of $L$-functions. First of all, the Riemann Hypothesis is known to be true in this context. Also, one can explicitly calculate the zeros of relevant $L$-functions in some specific cases and prove the function field version of LI (see the definition of Grand Simplicity Hypothesis in \cite{Cha08} and also Definition \ref{LIDefinition} below). 

The goal of the current paper is to present an unconditional 
analysis of the Chebychev bias 
for elliptic curves $E$ over a function field $K$ (in particular it can be seen as a partial $2$-dimensional generalization of~\cite{Cha08}). More precisely we study the sign of the summatory function 
of the (normalized) trace $a_v(E)/q^{\deg v/2}$ of the Frobenius at $v$, where $v$ runs over 
the places of the function field 
of a smooth proper geometrically connected curve over a finite field $\mathbb F_q$. In Section~\ref{sec:Race} 
we present the analogue of the work of Sarnak~\cite{Sar07} in our geometric setting; as in \emph{loc.~cit.}, our analysis is more 
general than what is needed to study bias phenomena in the distribution of the traces of Frobenius. 
Indeed it involves an arbitrary smooth function of the angles $\theta_v$ of the local Frobenius traces, and 
as such zeros of higher symmetric power $L$-functions come into play. 
Section~\ref{sec:Ulmer} is devoted to the study of Chebychev's bias for Ulmer's family of elliptic curves 
(defined by~\eqref{eq:UlmerCurve}). For this particular family, the Hasse--Weil $L$-function is completely explicit and can be described in an 
elementary fashion (involving e.g. multiplicative orders modulo the divisors of the parameter $d$). Finally 
in Section~\ref{sec:CLT} we prove a central limit theorem which, in conjunction with the generic linear independence results proven in~\cite{CFJ}, allows us to deduce that most elliptic curve prime number races 
over function fields are very moderately biased (see Theorem~\ref{thm:generic bias}).

\subsection*{Notations} 
Throughout the paper $p$ denotes a prime number and $\ell$ is a prime number 
different from $p$. We fix a finite field $\mathbb{F}_q$, where $q$ is a power of $p$, and a proper, smooth and geometrically connected curve
$C/\mathbb{F}_q$, with function field $K=\mathbb{F}_q(C)$. At each place $v$ of $K$ we have the residue field $k_v$ 
 which is the unique extension of $\mathbb F_q$ 
(in a fixed algebraic closure) of degree $\deg(v)$. We fix a separable closure $K_s$ of $K$ and we let 
$G_K:= \Gal(K_s/K)$ be the absolute Galois group of $K$. Finally, $E/K$ is an elliptic curve with nonconstant $j$-invariant and its analytic rank will be denoted by $\rank(E/K)$.

\subsection{Main results}\label{section:mainresults}

For a closed point $v$ of $C$ at which $E/K$ has good reduction, we let $a_v$ be the trace of Frobenius at $v$.
 We study the average behavior of $a_v/q^{\deg(v)/2} = 2 \cos \theta_v$. Note that the results of Section \ref{sec:Race} apply to the more general context of any smooth function of $\theta_v$, but we will focus on $2\cos \theta_v$ for now. We are interested in the limiting distribution arising from
\begin{equation}
 T_E(X):=-\frac X{q^{X/2}} \sum_{\substack{\deg(v) \leq X\\ v \text{ good}} } 2 \cos \theta_v = -\frac{X}{q^{X/2}}\sum_{\substack{\deg(v) \leq X\\ v \text{ good}}} \frac{a_v}{q^{\deg(v)/2}}. \label{eq:defofT(X)}
\end{equation} 
The quantity $T_E(X)$ oscillates, and usually takes both positive and negative values. To measure how long $T_E(X)$ stays positive or negative, that is, to measure its bias, we define 
\[
\underline{\overline{\delta}}(E) := \overline{\underline{\lim}}_{M\rightarrow \infty} \frac 1M \sum_{\substack{ X\leq M \\ T_E(X)> 0 }} 1.  
\]
Note that the equality between $\underline{\delta}(E)$ and $\overline{\delta}(E)$ is immediate when $T_E(X)$ is a periodic function of $X$, or when it is periodic up to an error of $o_{X\rightarrow \infty}(1)$. Further, since $E/K$ has nonconstant 
$j$-invariant, we write its Hasse-Weil $L$-function as
\[
L(E/K, T) = \prod_{j = 1}^{N_{E/K}} (1 - q \ee^{\ii \theta_j}T).
\]
Here, $N_{E/K}$ is given by the formula
\[
N_{E/K} = 4(g_C - 1) + \deg(\mathfrak{n}_{E/K}),
\]
where $g_C$ is the genus of $C$ and $\mathfrak{n}_{E/K}$ is the conductor of $E/K$. 

Our first theorem, which will be obtained by combining Corollary \ref{thm:LimitingDistribution}, Corollary \ref{thm:LimitDistForT}, 
and Theorem \ref{CentralLimitTheorem}, provides a general description of the prime number race for elliptic curves in function fields.

\begin{definition}\label{def:limdist}
We say that a function $S:[0,\pi]\rightarrow \mathbb{R}$ has a \emph{limiting distribution} if 
there exists a Borel measure $\mu$ on $\mathbb{R}$ such that for any bounded Lipcshitz continuous function $f:\mathbb{R} \longrightarrow \mathbb{R}$ we have
\begin{equation}\label{Lipschitz}
\lim_{M\to\infty}\frac1M\sum_{X=1}^M f\left(S(X)\right) = \int_{\mathbb{R}} f(t) \mathrm{d}\mu_V(t).
\end{equation}
\end{definition}

In case $S$ is chosen to be the function $T_E$ of~\eqref{eq:defofT(X)} we prove the following result.

\begin{theorem}\label{FirstTheorem} We keep the notation as above.
\begin{enumerate}
\item[(i)] The function $T_E(X)$ has a limiting distribution. Denoting by $X_E$ the associated random variable, its mean and variance are given by
\[
\mathbb{E}[X_E] =
\frac{\sqrt q}{\sqrt q-1}
\left(
\rank(E/K) - \frac12
\right),
\]
and
\[
\mathbb{V}[X_E] =
\frac14\left(\frac{\sqrt q}{\sqrt q+1}\right)^2 + 
\sums_{\theta_j\neq 0}
\frac{m(\theta_j)^2}{|1-q^{-1/2}\mathrm{e}^{-\mathrm{i}\theta_j}|^2}.
\]
Here, $m(\theta_j)$ is the multiplicity of $\theta_j$, and the starred-summation $(\sum^*)$ means that it runs over all $\theta_j\neq0$ counted without multiplicity.
\item[(ii)] Let $\{ E/K \}$ be a family of elliptic curves of unbounded conductor satisfying LI
(see Definition~\ref{LIDefinition}) and such that
$
\rank(E/K)=o(\sqrt{N_{E/K}})
$
as $N_{E/K}\to\infty$. Then, the random variable 
$$\sqrt{\frac{q-1}{q}} X_E/\sqrt{N_{E/K}} $$
converges in distribution to the standard Gaussian as $N_{E/K} \to\infty$, and as a consequence we have that $\delta(E) \rightarrow \tfrac 12$.
\end{enumerate}
\end{theorem}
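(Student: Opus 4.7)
The plan is to combine the limiting distribution from part~(i) with the central limit theorem proven in Section~\ref{sec:CLT} (Theorem~\ref{CentralLimitTheorem}), and then to translate the Gaussian convergence into the dissipation of bias $\delta(E)\to \tfrac 12$.

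By part~(i), $X_E$ admits an explicit decomposition in terms of the inverse zeros of $L(E/K,T)$. Under LI, Kronecker--Weyl equidistribution makes the phases $X\theta_j\bmod 2\pi$ jointly uniform over pairs of conjugate nontrivial zeros, so $X_E$ equals a deterministic drift $\mathbb{E}[X_E] = O(\rank(E/K))$ plus a sum of independent, uniformly bounded random variables $Y_j = 2\,\mathrm{Re}\bigl[e^{\mathrm{i}Z_j}/(1-q^{-1/2}e^{-\mathrm{i}\theta_j})\bigr]$ satisfying $|Y_j|\le 2/(1-q^{-1/2})$. The crucial step for the CLT is the variance asymptotic $\mathbb{V}[X_E] = \frac{q}{q-1}N_{E/K}+o(N_{E/K})$: expanding
\[
\frac{1}{|1-q^{-1/2}e^{-\mathrm{i}\theta_j}|^2} = \sum_{n,m\geq 0} q^{-(n+m)/2} e^{\mathrm{i}(m-n)\theta_j}
\]
and swapping summations, the diagonal $m=n$ produces $\frac{q}{q-1}N_{E/K}$, while the off-diagonal terms reduce to $\sum_j e^{\mathrm{i}k\theta_j}$ for $k\neq 0$, which by the explicit formula are controlled by the Weil-bounded Frobenius traces $a_v$ and contribute only $O(1)$.

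Since the summands $Y_j$ are uniformly bounded and $\mathbb{V}[X_E]\to\infty$, the Lindeberg--Feller condition holds trivially, so $(X_E-\mathbb{E}[X_E])/\sqrt{\mathbb{V}[X_E]}\to N(0,1)$ in distribution. Because $\mathbb{E}[X_E] = O(\rank(E/K)) = o(\sqrt{N_{E/K}})$ by hypothesis, the drift is negligible compared to the standard deviation, and Slutsky's lemma gives the equivalent normalization $\sqrt{(q-1)/q}\,X_E/\sqrt{N_{E/K}}\to N(0,1)$. Finally, by the limiting-distribution interpretation $\delta(E)=\mathbb{P}(X_E>0)$, and since $0$ is a continuity point of the standard Gaussian CDF, convergence in distribution forces $\delta(E)\to \mathbb{P}(Z>0)=\tfrac 12$. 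The principal technical obstacle is the variance asymptotic, which requires controlling the off-diagonal power sums $\sum_j e^{\mathrm{i}k\theta_j}$ via the explicit formula uniformly in the family; this content is essentially the substance of Theorem~\ref{CentralLimitTheorem}.
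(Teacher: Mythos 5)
Your proposal addresses only part~(ii), taking part~(i) as a black box; the paper proves~(i) separately, using the decomposition of Corollary~\ref{thm:LimitDistForT} together with the discrete equidistribution estimate~\eqref{eq:EquiDistributionCircle} to evaluate the mean and the variance. For part~(ii) your architecture is sound and parallels the paper's: under LI, split $X_E$ into a small drift plus a sum of independent uniformly bounded summands, establish the variance asymptotic $\mathbb{V}[X_E]\sim\frac{q}{q-1}N_{E/K}$, apply a CLT, and absorb the drift via Slutsky. Replacing the paper's pointwise convergence of characteristic functions (Lévy continuity) by Lindeberg--Feller is a perfectly legitimate and slightly more elementary route for the qualitative conclusion; Theorem~\ref{CentralLimitTheorem} is quantitative, but that is not needed for part~(ii).

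There is, however, a real gap at the step you yourself call the principal technical obstacle. You assert that, after extracting the diagonal from
\[
\frac{1}{|1-q^{-1/2}\ee^{-\ii\theta_j}|^2}=\sum_{n,m\geq 0}q^{-(n+m)/2}\ee^{\ii(m-n)\theta_j},
\]
the off-diagonal power sums $\sum_j\ee^{\ii k\theta_j}$ are ``controlled by the Weil-bounded Frobenius traces $a_v$ and contribute only $O(1)$.'' Neither half of that claim holds. Fed through the explicit formula (Proposition~\ref{thm:Trace}), the Weil bound $|a_v|\le 2q^{\deg v/2}$ only gives $|\sum_j\ee^{\ii k\theta_j}|\ll q^{k/2}$, so $\sum_{k\ne 0}q^{-|k|/2}\bigl|\sum_j\ee^{\ii k\theta_j}\bigr|$ diverges; the trivial bound $|\sum_j\ee^{\ii k\theta_j}|\le N_{E/K}$ makes the off-diagonal $O(N_{E/K})$, the same order as the diagonal, so no asymptotic follows. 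Unrolling your geometric series, the off-diagonal contribution is (up to $O(\rank(E/K))$) a constant multiple of $q^{-3/2}\frac{L'}{L}(E/K,q^{-3/2})$, and applying the Weil bound to the corresponding prime sum $\sum_v \deg(v)\,a_v(E)/q_v^{3/2}$ gives exactly the divergent $\sum_d 1/d$. The paper closes this with Lemma~\ref{lem:sumoverzeros}: it evaluates $\frac{L'}{L}$ at $T=q^{-1/2}$, uses the functional equation~\eqref{FE} to extract the main term $N_{E/K}$ and transfer to $T=q^{-3/2}$, and then invokes Lemma~\ref{Littlewood}, a Littlewood-type bound obtained by splitting the prime sum at $\deg v\approx 2\log_q N_{E/K}$, bounding the short range trivially, and using the RH cancellation of Theorem~\ref{thm:WienerIkehara} on the tail. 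The resulting error is $O(\log N_{E/K}+\rank(E/K))$ rather than $O(1)$ --- still $o(N_{E/K})$, so the conclusion stands, but not via the argument you propose. Without this ingredient the normalization $\sqrt{(q-1)/q}\,X_E/\sqrt{N_{E/K}}$ in your statement is not justified.
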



This theorem says that the prime number race for elliptic curves can be generally described by its rank and the multiplicities of the zeros of $L(E/K, T)$. Indeed, up to a constant the variance is given by the square of the $2$-norm of the vector of multiplicities of the $\theta_j\neq 0$.   
Further, if we assume LI,
then the bias in the race dissipates as $N_{E/K}$ gets large, unless the rank grows faster than $\sqrt{N_{E/K}}$. These results are in line with corresponding number field counterparts in \cite{Sar07} and \cite{Fio13}. However, our results are much more unconditional than in the number field setting. This is mainly because the necessary analytic properties of $L(E/K, T)$ are established in the  function field setting. We also note that the confirmation of LI is completely conjectural in the number field case. However over function fields, it is possible to prove LI in some cases. In fact, in a separate paper \cite{CFJ}, we prove LI among certain families of elliptic curves generically by establishing quantitive bounds for the number of elliptic curves in the families satisfying LI. Let us briefly recall the construction of one of the main 
families studied in \emph{loc.~cit.}

Fix an elliptic curve $E/K$. The family we consider is a family 
of \emph{quadratic twists} of $E/K$. 
For ease of exposition let us recall the necessary definitions only in the case where $C=\Pp^1$, in which case $K$ is simply the rational function field $\Fp_q(t)$. 
Suppose that $E/K$ is given
by the Weierstrass equation $y^2=x^3+ax+b$, where $a,b\in\mathcal \Fp_q[t]$. For each $f\in K^\times$ we consider
$$
E_f\colon y^2=x^3+f^2ax+f^3b\,
$$
which is a Weierstrass equation for an elliptic curve over $K$. 
A quadratic twist 
of $E/K$ is an elliptic curve $E_f/K$ such that $f$ is not a square in $K$. 
Note that $E_f$ is isomorphic to $E_g$ over $K$ if  
and only if there exists $c\in K^\times$ such that $f=gc^2$.
 

 Let $\mathcal E\rightarrow \mathbb P^1$ be the minimal Weierstrass model (i.e.~the identity component of the N\'eron model of $E$) 
 corresponding to $E/K$.
Let us assume that $\mathcal E\rightarrow \mathbb P^1$ has at least one fiber 
of multiplicative reduction and fix a nonzero element $m\in \mathcal \Fp_q[t]$ 
which vanishes at at least one point of the locus $M$ of multiplicative reduction of 
$\mathcal E\rightarrow \mathbb P^1$.
The ``twisting family'' we consider was first introduced by Katz. It is the $(d+1)$-dimensional 
affine variety for 
which the $\Fp$-rational points are:
\begin{equation}\label{twisting-space}
\mathcal F_d(\Fp)=\{f\in \Fp[t]\colon f\text{ squarefree},\,\deg f=d,\, {\rm gcd} (f,m)=1\}\,,
\end{equation}
for any algebraic extension $\Fp\supseteq \Fp_q$, and
where $d\geq 1$ is an integer.
A remarkable fact proven by Katz is that if $f\in\mathcal F_d(\Fp_{q^n})$ then the conductor of $E_f$ only depends
 on $d$ and $q$. In particular we can let $n\rightarrow\infty$ without affecting the value of the common conductor of the twists $E_f$.


As was already mentioned, the above construction of quadratic twists can be done over any function field $K = \mathbb F_q(C)$, where $C/\Fp_q$ is any smooth geometrically connected proper curve. In particular \eqref{twisting-space} can be defined in this more general context; it will then consist of elements of $\mathcal O$, the integer ring of the compositum $\mathbb F K$.
 
The conjunction of the results of~\cite{CFJ} with Theorem \ref{CentralLimitTheorem}, which is a stronger (quantitative) version of Theorem \ref{FirstTheorem}, gives the following result which holds for any function field $K=\mathbb F_q(C)$, with $C$ as above.

\begin{theorem}
\label{thm:generic bias}
With notation as above, there exists an absolute constant $c$ such that the proportion of parameters $f\in\mathcal F_d(\mathbb F_{q^n})$ for which the inequality   
$$ \left|\delta(E_f)  - \frac 12 \right| \leq \frac{c}{\sqrt{d}} $$
fails to hold is
$
\ll_{d,E/\Fp_q(C)} n\log q/q^{nc_E d^{-2}}\,,
$
where the positive constant $c_E$ depends only on the base curve $E$.

\end{theorem}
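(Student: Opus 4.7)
The plan is to combine the quantitative central limit theorem (Theorem~\ref{CentralLimitTheorem}) with the generic linear independence results for quadratic twists proven in~\cite{CFJ}. A key preliminary observation, due to Katz and recalled above, is that the conductor $N_{E_f/K}$ of $E_f$ depends only on $d$ and $q$ as $f$ ranges over $\mathcal{F}_d(\mathbb{F}_{q^n})$, and that this common conductor grows linearly in $d$.

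First I would partition $\mathcal{F}_d(\mathbb{F}_{q^n})$ into a ``good'' subset, consisting of those parameters $f$ for which $E_f/K$ satisfies LI, and its complement. The quantitative generic LI theorem of~\cite{CFJ} yields precisely the bound $\ll_{d,E/\mathbb{F}_q(C)} n\log q/q^{nc_E d^{-2}}$ on the proportion of $f$ in the complement. It therefore suffices to establish $|\delta(E_f) - 1/2|\leq c/\sqrt{d}$ for all $f$ in the good subset, for some absolute constant $c$.

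For $f$ in the good subset, LI forces all nonzero ordinates $\theta_j$ of $L(E_f/K,T)$ to have multiplicity one, and constrains the multiplicity at the central point so that $\rank(E_f/K)$ is uniformly bounded. The explicit formulas of Theorem~\ref{FirstTheorem}(i) then yield, uniformly in $q\geq 2$, the estimates $|\mathbb{E}[X_{E_f}]|\ll 1$ (from the rank bound together with the bounded factor $\sqrt q/(\sqrt q-1)$) and $\mathbb{V}[X_{E_f}]\gg N_{E_f/K}\gg d$ (each summand being bounded below by a positive constant independent of $q$). Writing $\delta(E_f) = \mathbb{P}[X_{E_f}>0]$ and standardizing, the quantitative Gaussian approximation provided by Theorem~\ref{CentralLimitTheorem} gives
$$
\delta(E_f) = 1 - \Phi\!\left(-\frac{\mathbb{E}[X_{E_f}]}{\sqrt{\mathbb{V}[X_{E_f}]}}\right) + O\!\left(\frac{1}{\sqrt d}\right),
$$
where $\Phi$ is the standard Gaussian cumulative distribution function. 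Combining with the bound $|\mathbb{E}[X_{E_f}]|/\sqrt{\mathbb{V}[X_{E_f}]}\ll 1/\sqrt d$ yields the claimed inequality.

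The main obstacle is establishing the uniform $O(1/\sqrt d)$ Berry--Esseen type error in the quantitative CLT of Section~\ref{sec:CLT}: the uniformity in $q$ and in the individual twist $E_f$ is delicate, and motivates the care taken in setting up the central limit theorem in quantitative form. The remaining ingredients --- the conductor formula, the control of ranks and multiplicities under LI, and the linear lower bound on the variance --- are comparatively routine once the CLT is in place.
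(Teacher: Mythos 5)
Your overall strategy matches the paper's: split $\mathcal{F}_d(\mathbb{F}_{q^n})$ according to whether the generic result of \cite{CFJ} applies, use that the conductor is common to all twists and grows linearly in $d$, and then invoke the quantitative CLT (via Corollary~\ref{cor:LIMeansNoBias}) to get the $O(1/\sqrt{d})$ bound on $|\delta(E_f)-1/2|$.

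However, there is one genuine gap. You claim that ``LI \dots constrains the multiplicity at the central point so that $\rank(E_f/K)$ is uniformly bounded.'' Under Definition~\ref{LIDefinition}, LI is a statement about the multiset $\{\theta_j/\pi \colon 0<\theta_j\leq\pi\}\cup\{1\}$ for the \emph{non-forced} zeros, and it explicitly excludes $\theta_j=0$; it therefore says nothing about the multiplicity of the zero at $T=q^{-1}$, i.e.\ about the analytic rank. Indeed, Corollary~\ref{cor:LIMeansNoBias} is stated with a \emph{separate} hypothesis $\rank(E/K)=o(\sqrt{N_{E/K}})$ precisely because LI does not imply it. Without a rank bound, the formula $\mathbb{E}[X_E]=\frac{\sqrt q}{\sqrt q-1}(\rank(E/K)-\frac12)$ from Theorem~\ref{FirstTheorem}(i) leaves $\mathbb{E}[X_E]/\sqrt{\mathbb{V}[X_E]}$ uncontrolled and the conclusion fails.

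The paper closes this gap by working not with LI itself but with the stronger notion from \cite{CFJ}: triviality of the multiplicative $\mathbb{Z}$-module $\mathrm{Rel}\bigl((\gamma_j(f))\bigr)$ of relations among the non-forced inverse zeros. Triviality of $\mathrm{Rel}$ implies LI (any nontrivial $\mathbb{Q}$-linear relation among the $\theta_j/\pi$ and $1$ exponentiates to a nontrivial element of $\mathrm{Rel}$) \emph{and} implies $\rank(E_f)\leq 1$ (if $\rank\geq 2$, some non-forced $\theta_j$ equals $0$, which is itself a nontrivial relation). The quantitative bound from \cite[Th.~2.3]{CFJ} is on the proportion of $f$ with $\mathrm{Rel}$ nontrivial, and it is this proportion (not ``LI fails'') that is $\ll n\log q/q^{nc_Ed^{-2}}$. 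So your partition should be by triviality of $\mathrm{Rel}$, and the rank bound should be derived from that, not from LI. The rest of your argument --- the bound $d\leq N_f\ll_E d$, the lower bound $\mathbb{V}[X_{E_f}]\gg N_{E_f/K}$ under simple nonzero zeros, and the Berry--Esseen estimate --- then goes through as in the paper.
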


 Since the statement of Theorem~\ref{thm:generic bias} involves 
 many different parameters, let us make several comments on 
 the way we think one should interpret it. First one fixes the piece of data $E/K$ (in particular the field of constants $\Fp_q$ of $K=\Fp_q(C)$ has fixed cardinality $q$). One should then pick $d$ large so that the first inequality means that $\delta(E_f)$ is very close to $1/2$. Then we use the remark preceding the statement of
  Theorem~\ref{thm:generic bias} to choose $n$ large so that the proportion of 
  curves excluded by the second inequality gets very close to $0$. This way 
Theorem~\ref{thm:generic bias} can be seen as a result asserting that ``generically'' prime number races for elliptic curves over function fields are very moderately biased.  

\begin{remark}\label{rem:LegendreTwists}
One may ask for the detailed description of a concrete example (i.e.~an example where one starts with a concrete base elliptic curve $E/K$) where the unspecified constants appearing in Theorem~\ref{thm:generic bias} can be made more explicit. 
Let us consider the case where $K$ is the rational function field $\Fp_q(t)$ and
 $E/K$ is the Legendre elliptic curve given by:
 $$
 y^2=x(x-1)(x-t)\,.
  $$
 The curve $E/K$ has multiplicative reduction precisely at $t$ and $t-1$ so that one 
 may take $m(t)=t(t-1)$ to define the twisting space $\mathcal F_d$.

Fix $d\geq 2$ and $\tilde{f}\in \mathcal F_{d-1}(\Fp_{q})$. 
Let $n\geq 1$ be an integer. We restrict to twists of $E$ by the $\Fp_{q^n}$-points of the open affine
 curve $U_{\tilde{f}}$ with geometric points:
$$
U_{\tilde{f}}(\overline{\Fp_q})=\{c\in\overline{\Fp_q}\colon 
(c-t)\tilde{f}(t)\in{\mathcal F_d}(\overline{\Fp_q})\}=\{c\in\overline{\Fp_q}\setminus\{0,1\}\colon 
\tilde{f}(c)\neq 0\}\,.
$$
If $c\in U_{\tilde{f}}(\Fp_{q^n})$ we denote by $E_c$ 
 the quadratic twist of $E$ by $f$ where $f(t)=(c-t)\tilde{f}(t)$. For 
 $f\in\mathcal F_d(\Fp_{q^n})$ the conductor of $E_f/K$ is 
 $2d$ (resp.~$2d-1$) if $d$ is even (resp.~if $d$ is odd). (See~\cite[Cor.~$2.2$]{CFJ} and the references therein.) Combining the arguments we develop in the proof of Theorem~\ref{thm:generic bias} (see Section~\ref{sec:CLT}) with~\cite[Cor.~$2.2$]{CFJ} we deduce that there exists an absolute constant $c_0$ such that
 $\left|\delta(E_c)  - 1/2 \right| \leq c_0d^{-1/2}$ except 
 for a proportion $\ll_{\tilde{f}} d^2n\log q/q^{n/(24d^2)}$ (where the implied constant depends only on $\tilde{f}$ and thus is independent of $n$) of exceptions in $\Fp_{q^n}$. Thus for big enough $d$ and $q$ and for the choice 
 $n=d^2$ we get densities $\delta(E_c)$ very close to $1/2$ up to a proportion of exceptions $c\in\Fp_{q^n}$ very close to $0$.
\end{remark}


One can say that Theorem~\ref{thm:generic bias} presents an orderly picture regarding the prime number races for elliptic curves in general---their bias dissipates as the conductor gets large. In contrast, our next finding shows that the races can exhibit very diverse outcomes when we look into the behaviors of $\delta(E)$ as $E$ varies in a specific family of elliptic curves. We specialize to the family Ulmer considered in \cite{Ulm02} and uncover many different and surprising prime number races. Interestingly, many of the outcomes we discover are believed to be impossible in the number field case. The reason why Ulmer's family shows such diverse results is that LI is proven to be strongly violated in this family. 

Following \cite{Ulm02}, we let $E_d$ be the elliptic curve over $\mathbb F_q[t]$ given by the Weierstrass equation
\begin{equation}\label{eq:UlmerCurve}
 y^2+xy = x^3 - t^d\,,
\end{equation}
where $d$ and $p$ are chosen so that $d\mid p^n+1$ for some $n\geq 1$. We will be interested in the associated quantity defined by \eqref{eq:defofT(X)} which we will denote by $T_d(X)$.
Our main tool for studying $T_d(X)$ will be the explicit formula given in Proposition \ref{prop:explicitformulaS_d}, from which we directly deduce that $\underline{\delta}(E_d)=\overline{\delta}(E_d)$ i.e.~the density $\delta(E_d)$ exists.

Let us first state a result asserting that an extreme bias may occur ($T_d(X)$ either taking 
mostly positive or negative values) for suitable choices of parameters.

\begin{theorem}\label{th:extremebias} For the family $\{E_d/\mathbb F_q(t)\}$, one has the following 
cases of extreme bias.
\begin{enumerate}
\item[(i)] Suppose that $q\geq 3$, and assume that either
\begin{itemize}
\item $d$ is divisible by $2$ and $q\equiv 1 \bmod 4$, or
\item $d$ is divisible by $3$.
\end{itemize}
Then, $T_d(X)>0$ for all large enough $X$, and thus $\delta(E_d)=1$.
\item[(ii)] If $q=p^k$ with $p$ large enough and $d=p^n+1$ for some $1\leq n \leq e^{q^{\frac 12}/2}$ with $n \equiv 0 \bmod k$, then $T_d(X)>0$ for all large enough $X$, and thus $\delta(E_d)=1$.
\item[(iii)] Fix $\epsilon>0$. There exists primes $d\geq 3$ and $p$ such that $p$ is a primitive root modulo $d$, and such that if we pick $q=p^{\frac{d-1}2 +1}$, then the associated curve $E_d$ has analytic rank  $1$ (resp.~$2$) if 
$(d-1)/2$ is even (resp.~odd) and
$$ 0 < \delta(E_d) < \epsilon\,.  $$
\end{enumerate}
\end{theorem}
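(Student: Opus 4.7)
The proof of the three parts rests on the explicit formula of Proposition \ref{prop:explicitformulaS_d} combined with Ulmer's completely explicit description of the zeros of $L(E_d/\mathbb{F}_q(t),T)$, which factors through characters of $\mu_d(\overline{\mathbb{F}_q})$ and encodes the angles $\theta_j$ in terms of multiplicative orders modulo divisors of $d$ together with associated Jacobi-sum arguments. Schematically the formula reads
\[
T_d(X) = \frac{\sqrt q}{\sqrt q-1}\Bigl(\rank(E_d/\mathbb{F}_q(t))-\tfrac12\Bigr) + \sum_{\theta_j\neq 0} m(\theta_j)\,F(\theta_j,X) + o(1),
\]
with $F(\theta,X)$ an oscillating function of mean zero and amplitude $O(|1-q^{-1/2}e^{-i\theta}|^{-1})$. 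Each part reduces to a quantitative comparison between the constant rank term and the oscillating sum, which can be carried out because Ulmer's angles come from a finite set of explicitly described algebraic numbers.

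For part (i), Ulmer's rank formula furnishes an explicit positive lower bound on $\rank(E_d/\mathbb{F}_q(t))$ when $2\mid d$ and $q\equiv 1\bmod 4$, or when $3\mid d$, produced by isogeny/torsion contributions from the corresponding small divisors of $d$ (the $\theta=0$ contributions in Ulmer's sum). One then checks, using Ulmer's orbit structure and the assumption $q\ge 3$ to keep denominators away from zero, that the rank term strictly exceeds $\sum_{\theta_j\neq 0}m(\theta_j)/|1-q^{-1/2}e^{-i\theta_j}|$. For part (ii), the choice $d=p^n+1$ pushes $\rank(E_d/\mathbb{F}_q(t))$ to a positive proportion of $d$ by Ulmer's rank formula, while the upper bound $n\le \ee^{q^{1/2}/2}$ keeps the oscillating amplitude at $o(\rank)$; the divisibility $k\mid n$ guarantees that $\mathbb{F}_q$ contains the relevant roots of unity so that the local factors are really defined over $\mathbb{F}_q$. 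In both cases we conclude $T_d(X)>0$ for all large $X$, whence $\delta(E_d)=1$.

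Part (iii) is opposite in spirit. Taking $d$ prime and $p$ a primitive root modulo $d$ with $q=p^{(d-1)/2+1}$ forces the Galois orbits of characters of $\mu_d$ in Ulmer's factorization to be maximal, so that the zeros of $L(E_d/K,T)$ collapse into only one or two orbits of multiplicity proportional to $d$; this is an extreme violation of LI, built into the arithmetic of $p$ modulo $d$. The rank is then exactly $1$ or $2$ according to the parity of $(d-1)/2$, giving a small positive constant term, while the oscillating part becomes a short trigonometric polynomial whose Jacobi-sum coefficients produce a spike profile: a small proportion of $X$ at which $T_d(X)$ attains a large positive value, balanced by a majority of $X$ at which $T_d(X)$ is slightly negative. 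Computing $\delta(E_d)$ directly from the limiting distribution on one period yields $\delta(E_d)$ comparable to (rank term)$/$(peak amplitude), which tends to $0$ as $d\to\infty$, while strict positivity $\delta(E_d)>0$ follows from equidistribution of the dominant phase $\theta_\star X\bmod 2\pi$ together with the positivity of the mean.

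The principal obstacle is the phase analysis in (iii): a spike profile could a priori give $\delta(E_d)\approx\epsilon$ or $\delta(E_d)\approx 1-\epsilon$ depending on whether the rare peaks of the oscillation are positive or negative, so one must use the precise Jacobi-sum data in Ulmer's formulas---not merely the multiplicities and amplitudes---to certify that the spikes indeed fall on the positive side. Parts (i) and (ii) are cleaner but still require tight bounds on $\sum_{\theta_j\neq 0}m(\theta_j)/|1-q^{-1/2}e^{-i\theta_j}|$ extracted from the arithmetic of orders modulo divisors of $d$, which is where the condition $q\ge 3$ and the cap on $n$ enter in a non-trivial way.
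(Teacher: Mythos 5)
Your overall strategy -- compare the mean term against the amplitude of a mean-zero oscillating term and check which one dominates -- does not match the paper's proof and, more importantly, would fail here. In the schematic formula you wrote, the quantity $\sum_{\theta_j\ne 0} m(\theta_j)/|1-q^{-1/2}\ee^{-\ii\theta_j}|$ is comparable to the degree $N_{E/K}$, while the rank term is only $\rank(E_d/K)$, which for Ulmer's curves is typically much smaller (e.g.\ in part (i), the hypotheses only force $\epsilon_d\ge 1$, so the rank can be of size $\sum_e\phi(e)/o_e(q) \ll d$ while $N_{E/K}\sim 2d$). So the rank term never ``strictly exceeds'' that sum, and no argument of the form mean $>$ amplitude can close the proof. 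What actually makes (i) and (ii) go through is a different structural fact that your proposal does not exploit: because the zeros of $L(E_d/K,T)$ are at rational multiples of $2\pi$, the oscillating piece re-sums into the \emph{manifestly nonnegative} periodic expression of Proposition~\ref{prop:explicitformulaS_d},
\[
\sum_{\substack{e\mid d\\ e\nmid 6}} \phi(e)\,\frac{q^{-(X\bmod o_e(q))/2}}{1-q^{-o_e(q)/2}} \;\ge\; 0.
\]
With this in hand, (i) is trivial (the $\epsilon_d$ term alone already beats $c_{\pm}(X)$) and (ii) follows by dropping all $e\ne d$ terms and bounding the remaining one from below over a full period, using $o_d(q)\mid 2n/k$; the cap $n\le \ee^{q^{1/2}/2}$ enters only to control $\phi(d)/(d-1) \gg 1/\log\log d$, not to bound any oscillation.

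For (iii) you have the right overall picture (rare positive spikes against a mostly negative background), but two issues remain. First, your concern that the spike ``could fall on the negative side'' and that one needs Jacobi-sum data to decide is unfounded: in Proposition~\ref{prop:explicitformulaS_d} the spike occurs at $X\equiv 0$ modulo the period, where every term is at its maximum, so it is automatically positive; this is exactly Corollary~\ref{corollary lower bound for delta}, which gives $\delta(E_d)\ge 1/(2n)>0$ unconditionally. (Jacobi sums never appear in the explicit formula.) Second, and more seriously, your proposal contains no argument that the required pairs $(p,d)$ with $p$ a primitive root modulo an arbitrarily large prime $d$ actually exist; the paper invokes Goldfeld's theorem on Artin's conjecture on average (via Corollary~\ref{cor:Goldfeld}), and without some such input part (iii) is only conditional. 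You should also note that equidistribution of phases is not needed: $T_d^{\rm per}(X)$ is literally periodic, so $\delta(E_d)$ is computed by counting residues on one period, not by any ergodic argument.
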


\begin{remark}
One might wonder whether it is possible to plainly have $\delta(E_d)=0$. We will show in Corollary \ref{corollary lower bound for delta} that for $d\geq 7$ this is impossible, since we always have $\delta(E_d)> 1/2n $, where $n$ is the least positive integer such that 
$d\mid p^n+1$.
\end{remark}

The first phenomenon we uncover in Theorem \ref{th:extremebias}(i) is the existence of elliptic curves $E_d$ for which $\delta(E_d)=1$. This is quite surprising since one can show for an elliptic curve $E$ over $\mathbb Q$ that under the Riemann Hypothesis for $L(E,s)$, we always have $\delta(E)<1$.~(This follows from the analysis in  \cite[Th.~1.2]{RS94}.) In the second point of the statement (i), the integer $d$ is divisible by $3$, which creates `extra rank' for $E_d$ (see Proposition \ref{thm:UlmerLfunction}). However (ii) shows the existence of infinitely many $d$ not necessarily divisible by $3$ for which $\delta(E_d)=1$.



Part (iii) of the theorem highlights a remarkable feature of Ulmer's family. Indeed there are curves 
within the family $E_d/\mathbb F_q(t)$ of analytic rank $\geq 1$ for which $\delta(E_d)<\frac 12$. This is surprising since in the case of elliptic curves over $\mathbb Q$, Sarnak showed\footnote{When comparing our results with those of Sarnak, one should keep in mind that we are considering the race of opposite sign, that is we are considering the summatory function of $-a_v /q^{\deg v/2}$, and Sarnak is considering the summatory function of $a_p/\sqrt p$.} under a Riemann Hypothesis and a Linear Independence hypothesis that whenever the analytic rank of $E$ is greater than or equal to $1$, we have $\delta(E)> \frac 12$. By (iii) we can find prime number races which are arbitrarily biased towards negative values. Interestingly, the involved curves $E_d$ have rank at most $2$, and a high bias is quite unexpected for such curves. Indeed as was remarked by Sarnak \cite{Sar07}, one can show under GRH and LI that for elliptic curves of rank at most $2$, the density $\delta(E)$ approaches $\frac 12$ as $E$ runs over a family satisfying $N_E\rightarrow\infty$ (see~\cite[Proof of Th.~1.5]{Fio13} for a similar result with a weaker hypothesis).




Next we turn to subfamilies of $E_d/\mathbb F_q(t)$ with behavior very different to the above examples. Precisely the following statement shows the existence of curves for which there is no bias at all, in other words $\delta(E_d)=\tfrac 12$. Note that this is believed to be impossible for an elliptic curve $E/\mathbb Q$, since Sarnak \cite{Sar07} has shown under GRH and LI that $\delta(E)\neq \tfrac 12$.

 

\begin{theorem}\label{th:moderatebias}
For the family $\{E_d/\mathbb F_q(t)\}$, one has the following 
cases where $T_d(X)$ is completely unbiased. Fix $p\equiv 3 \bmod 4$ and let $d \geq 5$ be a divisor of $p^2+1$. Pick $q=p^{4k+1}$ with $k\geq 1$. Then the analytic rank of $E_d$ is either $(d-1)/4$ or $(d-2)/4$ depending on whether $d$ is congruent to $1$ or $2$ modulo $4$, and we have 
$$ \delta(E_d)=\frac 12\,. $$
\end{theorem}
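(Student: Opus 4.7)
My plan is to derive both assertions of the theorem from the explicit formula of Proposition~\ref{prop:explicitformulaS_d}, combined with Ulmer's explicit factorization of $L(E_{d}/K,T)$ recorded in Proposition~\ref{thm:UlmerLfunction}.

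First I translate the hypotheses into orbit data. For any divisor $e\mid d$ with $e>2$, the assumption $d\mid p^{2}+1$ gives $p^{2}\equiv-1\pmod{e}$; combined with $q=p^{4k+1}$, one obtains $q\equiv p\pmod{e}$ and $q^{2}\equiv-1\pmod{e}$, so the multiplicative order of $q$ modulo $e$ equals exactly $4$. Hence every nontrivial Frobenius orbit of primitive characters of order $e$ under $\chi\mapsto\chi^{q}$ has cardinality four, and the identification $\chi^{-1}=\chi^{q^{2}}$ shows that each such orbit is closed under inversion. Reading off from Proposition~\ref{thm:UlmerLfunction} the orbits whose Jacobi sum produces a trivial inverse root and summing over $e\mid d$ with $e>2$ gives the count $\sum_{e\mid d,\,e>2}\phi(e)/4$, which equals $(d-1)/4$ or $(d-2)/4$ according as $d\equiv 1$ or $d\equiv 2\pmod 4$, thereby recovering the rank formula.

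For the density statement $\delta(E_{d})=\tfrac12$, the key point is that each size-four orbit $O$ produces a quadruple of nonzero angles of the form $\{\Theta_{O}+\ell\pi/2:\ell=0,1,2,3\}$, with the base angle $\Theta_{O}$ determined by the phase of the associated Jacobi sum $J_{O}$. Inserting this $4$-fold structure into the closed form of Proposition~\ref{prop:explicitformulaS_d} and grouping according to $X\bmod 4$, each orbit contributes a term whose $X$-dependence factors as
\begin{equation*}
q^{-(X\bmod 4)/2}\;\ee^{\,\ii(4k+1)(\arg J_{O})\lfloor X/4\rfloor}
\end{equation*}
times an $X$-independent amplitude; the factor $(4k+1)\arg J_{O}$ is itself controlled via the Hasse--Davenport identity relating the Jacobi sum over $\mathbb{F}_{q^{4}}$ to the $(4k{+}1)$-th power of its analogue over $\mathbb{F}_{p^{4}}$. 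Using the hypothesis $p\equiv 3\pmod 4$ together with Stickelberger-type congruences to pin down the phases $\arg J_{O}$ modulo $\pi$, I would then construct a density-preserving involution $X\mapsto X'$ of $\mathbb{Z}_{\geq 1}$ under which $T_{d}(X')=-T_{d}(X)+o(1)$ as $X\to\infty$, which immediately forces $\underline\delta(E_{d})=\overline\delta(E_{d})=\tfrac12$.

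The main obstacle will be to make this involution rigorous. One must keep careful track of every constant appearing in the explicit formula---the rank contribution, the bad-place terms, and the boundary contribution analogous to the $-\tfrac12$ in Theorem~\ref{FirstTheorem}---control the signs produced by the Hasse--Davenport lift, and verify that the oscillatory part of $T_{d}(X)$ places no mass at $0$ in its limiting distribution, so that neither $\underline\delta(E_{d})$ nor $\overline\delta(E_{d})$ can strictly exceed $\tfrac12$.
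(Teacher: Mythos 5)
Your preliminary steps — checking that $o_e(q)=4$ for every divisor $e\mid d$ with $e\nmid 6$ and deducing the rank formula — are fine and agree with the paper's argument. However, your plan for the density $\delta(E_d)=\tfrac12$ goes badly off track, because you are trying to recompute information that Proposition~\ref{thm:UlmerLfunction} already hands you in closed form. In that proposition the factor attached to $e$ is literally $\bigl(1-(qT)^{o_e(q)}\bigr)^{\phi(e)/o_e(q)}$, so with $o_e(q)=4$ the nonzero angles are exactly the fourth roots of unity $0,\pi/2,\pi,3\pi/2$, each with a known multiplicity. There is no residual base phase $\Theta_O$ to be determined: whatever Jacobi sums occur in Ulmer's derivation have already been evaluated and absorbed, so invoking Hasse--Davenport lifts and Stickelberger congruences to ``pin down the phases'' is solving a problem that does not exist and would never terminate in the form you describe. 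Relatedly, your formula $q^{-(X\bmod 4)/2}\,\ee^{\ii(4k+1)(\arg J_O)\lfloor X/4\rfloor}$ has no counterpart in the paper's explicit formula; Proposition~\ref{prop:explicitformulaS_d} already shows that $T_d(X)$ is (up to $o(1)$) an explicit real $4$-periodic function with no oscillatory factor left over.

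The second gap is that you never actually construct the involution $X\mapsto X'$ satisfying $T_d(X')=-T_d(X)+o(1)$, and such a symmetry is not what the paper establishes. The paper's argument is much more elementary: setting $\epsilon_d=0$ and $o_e(q)=4$ in Proposition~\ref{prop:explicitformulaS_d} yields the $4$-periodic main term
\[
T_d^{\rm per}(X)=-c_\pm(X)+\Bigl(\sum_{\substack{e\mid d\\ e\nmid 6}}\phi(e)\Bigr)\frac{q^{-(X\bmod 4)/2}}{1-q^{-2}},
\]
and one simply checks the sign in each residue class: for $X\equiv 0,1\pmod 4$ the positive term dominates (since $\sum\phi(e)\geq 3$ and $c_\pm(X)\leq q/(q-1)$), while for $X\equiv 2,3\pmod 4$ the term $q^{-1}$ or $q^{-3/2}$ is too small to overcome $c_\pm(X)$ because $d\leq p^2+1\ll q^{2/5}$. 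Two classes are positive, two are negative, hence $\delta(E_d)=\tfrac12$ directly. You should replace the Jacobi-sum machinery and the unconstructed involution with this straightforward four-case sign verification.
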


Another reason why Theorem \ref{th:moderatebias} is surprising is that for elliptic curves over $\mathbb Q$, the key to producing highly biased races is to find elliptic curves $E$ for which the analytic rank is considerably larger than $\sqrt{\log N_E}$, where $N_E$ is the conductor of $E$ (see \cite[Th.~1.2]{Fio13}). However, if we pick $d=p^2+1$, then many of the curves in Theorem \ref{th:moderatebias} have very high rank, quite close to the Brumer--Mestre bound when $k$ is not too large\footnote{For elliptic curves over the rational function field $\mathbb F_q(t)$, Brumer's analogue of Mestre's bound \cite[Proposition 6.9]{Br92} states that rank$(E/K)\ll \deg (\mathfrak n_d) / \log_q \deg (\mathfrak n_d) $.}. Indeed Proposition \ref{thm:UlmerLfunction} states that the rank of $E_d$ is the left-most member of the series 
of inequalities:  
$$ \epsilon_d + \sum_{\substack{ e\mid d \\ e\nmid 6}} \frac{\phi(e)}{o_e(q)} \geq \epsilon_d+ \sum_{\substack{ e\mid d \\ e\nmid 6}} \frac{\phi(e)}{4} \geq \frac{p^2-1}{4}\,. $$
(Here we use that $q^{4} \equiv p^{4} \equiv 1 \bmod d$; the integer $\epsilon_d\in \{0,1,2,3\}$ is defined in Proposition \ref{thm:UlmerLfunction}.) This is considerably larger than $\sqrt{\text{deg}(\mathfrak n_d)}$ (which is the analog of $\sqrt{\log N_E}$), as this last quantity is given by $\sqrt{p^2+O(1)}$ (see \cite[\S10.2]{Ulm02}). In such a situation one should expect 
$\delta(E)$ to be very close to $1$, in light of \cite[Theorem 1.2]{Fio13}.  However Hypothesis BM of  \cite{Fio13}, which states that the multiplicities of the nonreal zeros of the $L$-functions associated to elliptic curves over $\mathbb Q$ are uniformly bounded, is strongly violated\footnote{This fact can actually be checked directly using Proposition \ref{thm:UlmerLfunction}.} for the elliptic curves $E_d$. This explains why no such extreme bias occurs.




  


Our final result shows that for any fixed $m\geq 1$, there are many curves for which $\delta(E_d)$ is very close to $(2m)^{-1}$. Those are races whose bias is moderate, but does not dissipate as the conductor grows.  This result is motivated by the second part of Theorem 1.1 of \cite{Fio12} where the author shows under GRH and LI that, in the context of primes in arithmetic progressions, the set of all densities $\delta(q;NR,R)$ is dense in $[\frac 12,1]$. 

\begin{theorem}
\label{thm:limitingpoints}
Define the set of all possible densities coming from Ulmer curves:
$$ S:=\{ \delta(E_d) :  d \mid p^n+1, p\geq 3, n\geq 1; q=p^k, k\geq 1 \}. $$
Then for every $m\geq 1$ there exists elements of $S$ that are arbitrarily close to $1/(2m)$, that is:
$$\{1\}\cup\{ 1/(2m)  : m\geq 1\} \subset \overline{S}. $$

\end{theorem}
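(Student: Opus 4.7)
The plan is to exploit the explicit formula for $T_d(X)$ given in Proposition~\ref{prop:explicitformulaS_d} together with the explicit description of the zeros of $L(E_d/K, T)$ underlying Section~\ref{sec:Ulmer}. The crucial structural point is that when $d \mid p^n + 1$, every nonzero angle $\theta_j$ is forced to be a rational multiple of $\pi/n$, so $T_d(X)$ is a trigonometric polynomial in the integer variable $X$ of period dividing $2n$, up to an error $o(1)$. Consequently $\delta(E_d)$ is always a rational number of the form $k/P$, where $P \mid 2n$ and $k = \#\{X \bmod P : T_d(X) > 0\}$.

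The cases corresponding to $m = 1$ are already settled: $\delta = 1 \in S$ by Theorem~\ref{th:extremebias}(i) and $\delta = 1/2 \in S$ by Theorem~\ref{th:moderatebias}. The remaining task is, for each $m \geq 2$, to construct a sequence of parameters $(p_\ell, d_\ell, q_\ell)$ so that $\delta(E_{d_\ell}) \to 1/(2m)$ as $\ell \to \infty$. To do this, I would decompose $T_d(X) = A_d + R_d(X) + o(1)$, where $A_d = \frac{\sqrt q}{\sqrt q - 1}(\rank(E_d/K) - \frac12)$ is the constant term coming from the rank and the contribution of the trivial zero at $\theta = 0$ (cf.~Theorem~\ref{FirstTheorem}(i)), and $R_d$ is a mean-zero trigonometric polynomial determined by the nonzero angles.

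The strategy is then to tune parameters so that $R_d$ is dominated by a single low-frequency term $B_d \cos(\pi X / n_d + \phi_d)$. In that case $T_d > 0$ asymptotically on a fraction $\frac{1}{\pi}\arccos(-A_d/B_d)$ of residues, and it suffices to arrange that $A_d/B_d \to -\cos(\pi/(2m))$. The rank of $E_d$ is given explicitly in Proposition~\ref{thm:UlmerLfunction} in terms of divisors of $d$ and multiplicative orders $o_e(q)$; varying $p, q, d$ provides enough flexibility to approximate any given target ratio. An alternative, more direct variant is to take $n_\ell$ divisible by $m$ and arrange that $T_{d_\ell}$ is positive on essentially $n_\ell/m$ residues out of $2n_\ell$; this is consistent with the remark following Theorem~\ref{th:extremebias}, which shows that $1/(2n)$ is the smallest positive density achievable and suggests that the spectrum of achievable densities is dense in $[0,1/2]$.

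The main obstacle will be the fine-tuning required to ensure, on the one hand, that the single chosen frequency truly dominates the other contributions in $R_d$, and, on the other hand, that the achievable values of $A_d/B_d$ accumulate near $-\cos(\pi/(2m))$. The first point should follow from a careful choice of subfamily — for instance, restricting to $d$ prime or a prime power, where Proposition~\ref{thm:UlmerLfunction} yields a particularly transparent structure — while the second point should follow from a Dirichlet-type density argument on the arithmetic data (multiplicative orders, divisor counts) that enter $A_d$ and $B_d$ through the explicit formula.
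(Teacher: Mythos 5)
Your starting framework is right: Proposition~\ref{prop:explicitformulaS_d} shows that $T_d(X)$ is, up to $o(1)$, a periodic function of period dividing $2n$, so $\delta(E_d)$ is a rational number determined by counting positive values over one period, and the cases $\delta=1$ and $\delta=\tfrac12$ are dispatched by Theorems~\ref{th:extremebias} and~\ref{th:moderatebias} exactly as you say. However, your primary mechanism for $m\geq 2$ is based on a misconception about the shape of the periodic part. You model $T_d(X)$ as $A_d + B_d\cos(\pi X/n_d+\phi_d)$ and then invoke the $\arccos(-A_d/B_d)/\pi$ formula for the proportion of positive residues. Proposition~\ref{prop:explicitformulaS_d} shows that the oscillating part of $T_d$ is not a cosine wave at all: it is a sum of terms of the form $\phi(e)\,q^{-(X\bmod o_e(q))/2}/(1-q^{-o_e(q)/2})$, each of which is an exponentially decaying spike concentrated near $X\equiv 0 \bmod o_e(q)$. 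Equivalently, the Fourier spectrum of the periodic part contains \emph{all} the frequencies $2\pi j/o_e(q)$ with coefficients of roughly comparable size, so there is no regime in which a single low frequency dominates. The $\arccos$ formula is therefore never the correct model here, and the plan of tuning $A_d/B_d\to -\cos(\pi/(2m))$ cannot be carried out within this family.

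The paper's actual mechanism is the one you gesture at in your ``alternative, more direct variant,'' but with a different dial: rather than making $n$ divisible by $m$, it controls the density through the exponent $k$ in $q=p^k$. Concretely, the paper restricts to the subfamily $d=(p^n+1)/(p+1)$ with $n$ prime, $n\nmid p+1$, and $q=p^k$ with $k$ even and coprime to $n$; the point of this choice (Proposition~\ref{prop:limitpoints}) is that $\epsilon_d=0$ and every $e\mid d$ with $e\nmid 6$ satisfies $o_e(q)=n$, so the entire sum collapses to a single spike $(d-1)\,q^{-(X\bmod n)/2}/(1-q^{-n/2})$ of period $n$. Since $d-1\approx p^{n-1}$ and $q^{-(X\bmod n)/2}=p^{-k(X\bmod n)/2}$, this spike exceeds $c_\pm(X)\approx 1$ precisely when $X\bmod n$ lies in an interval of length roughly $2n/k$, giving $\delta(E_d)\approx 1/k$. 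Setting $k=2m$ and letting $p,n\to\infty$ then pinches $\delta(E_d)$ to $1/(2m)$. So the missing idea in your proposal is twofold: (1) realize that the positivity set is governed by the \emph{width of an exponential spike}, not by a cosine threshold, and (2) find a subfamily of $d$ for which all the relevant multiplicative orders coincide, so that the spike has a single period and its width $\sim 2/k$ can be dialed by choosing $q=p^{2m}$. Restricting to $d$ prime (your suggestion) does also give a single period $o_d(q)$, but you then lack independent control of $o_d(q)$, $d$, and $q$ simultaneously; the choice $d=(p^n+1)/(p+1)$ is what provides the needed flexibility.
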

The corresponding statement for elliptic curves over $\mathbb Q$ is plainly false, as the only curves having $\delta(E)< \frac 12$ are curves of rank $0$, and for these curves $\delta(E)$ approaches $\tfrac 12$ as $N_E$ tends to infinity. Moreover, it is unclear whether one should expect to have any limit points in $(\tfrac 12,1)$, given our limited knowledge on ranks. Indeed, to obtain a limit point $\eta \in (\frac 12,1)$ for the set of all $\delta(E)$ with $E$ running over the curves over $\mathbb Q$, one would need\footnote{Under the Riemann Hypothesis and LI
for the functions $L(E,s)$, this is an equivalence.} an infinite sequence of curves of analytic rank equal to $(\kappa+o(1))\sqrt{\log N_E}$, where $\kappa$ is the unique real solution to the equation 
\[ \eta = \frac 1{\sqrt{2\pi}}\int_{-\kappa}^{\infty} e^{-x^2/2}\, \mathrm{d}x. \]

\section{Limiting distributions associated with elliptic 
curves\\ over function fields}\label{sec:Race}

\subsection{Recollection on $L$-functions}\label{sec:Notations}

We keep the notation as in~\S\ref{section:mainresults}.
For a closed point $v$ of $C$, we let $a_v(E)$ be the integer defined by
\[ a_v(E) :=  q_v + 1 - \#{E}^{\mathrm{ns}}_v(k_v).\]
Here, $\#{E}^{\mathrm{ns}}_v(k_v)$ is the number of $k_v$-rational points on the nonsingular locus of the reduction $E_v$ of $E$ at $v$. Also, if $E$ has a good reduction at $v$, it is well-known that $a_v(E)$ is the trace of the $k_v$-Frobenius map on the $\ell$-adic Tate module of $E_v/k_v$. Moreover, if we let $\alpha_v$ and $\beta_v$ be its eigenvalues, then
\[
|\iota(\alpha_v)| = |\iota(\beta_v)| = {q_v}^{1/2} = q^{\deg(v)/2},
\]
for any embedding $\iota$ of $\overline{\mathbb{Q}}_{\ell}$ into the field of complex numbers (for simplicity we will omit $\iota$ from now on; its use, where needed, will be implicit). Therefore, after we fix one such embedding, there exists a unique $\theta_v$ in $[0, \pi]$ for each $v$ of good reduction such that
\[
\alpha_v = \overline{\beta_v}=q^{\deg(v)/2}\ee^{\ii\theta_v}\,.
\]

Let us define precisely what are the $L$-functions that naturally come into play in our study. We will follow the definition of~\cite[\S3.1.7]{Ulm05} to define the $L$-function $L(\rho, K, T)$ for any continuous, absolutely irreducible $\ell$-adic representation 
\[
\rho:G_K \longrightarrow \GL(V)
\]
of the absolute Galois group $G_K$ in some finite dimensional $\mathbb{Q}_{\ell}$-vector space $V$. For each $v$, we choose a decomposition group $D_v \subset G(K)$ and we let $I_v$ and $\Frob_v$ be the corresponding inertia group and the geometric Frobenius conjugacy class. Then, the $L$-function $L(\rho, K, T)$ is defined by the formal product
\begin{equation}\label{def:LFunction}
L(\rho, K, T) = \prod_v \det
\left(
1 - \rho(\Frob_v) T^{\deg v} \big| V^{\rho(I_v)}
\right)^{-1}\,,
\end{equation}
where $V^{\rho(I_v)}$ is the subspace of inertia invariants of $V$.

Of interest to us is the continuous $\ell$-adic representation
\[
\rho_{\ell,E/K} : G_K \longrightarrow \Aut(V_{\ell}(E)),
\]
arising from the Galois action on $V_\ell(E) := T_{\ell}(E) \otimes \mathbb{Q}_\ell$, where $T_\ell(E)$ is the $\ell$-adic Tate module of $E/K$. Because of a well-known independence of $\ell$ property, (namely $(\rho_{\ell,E/K})_\ell$ forms a compatible system of representations), the $L$-function $L(\rho_{\ell, E/K},K,T)$ will be denoted simply $L(E/K,T)$ in the sequel.
Its local factors are given explicitly as follows (see e.g.~\cite[Lecture $1$]{Ulm11}):
\begin{equation}\label{LocalFactor}
L(E/K, T) = 
\prod_{v \text{ good}}
(1 - a_v(E) T^{\deg(v)} + q_v T^{2\deg(v)})^{-1}
\cdot
\prod_{v \text{ bad}}
(1 - a_v(E) T^{\deg(v)})^{-1}.
\end{equation} 
Here, when $v$ is a prime of bad reduction, we have $a_v(E)=1, -1$ or $0$, depending on the reduction type of $E$ at $v$ being split multiplicative, nonsplit multiplicative or additive, respectively. Also, for each $m\ge1$, we form 
\[
\Sym^m(\rho_{\ell,E/K}): G_K \longrightarrow \Aut(\Sym^m(V_{\ell}(E))),
\]
by taking the $m$-th symmetric power of $\rho_{\ell,E/K}$. Again, by independence of $\ell$, we can and we will write $L((\Sym^mE)/K, T)$ for the $L$-function associated with $\Sym^m(\rho_{\ell, E/K})$. The local factors of $L((\Sym^mE)/K, T)$ can be described as follows. If $E/K$ has good reduction at $v$, then its local factor at $v$ is
\begin{equation}\label{eq:LocalSym}
\prod_{j=0}^m(1 - {\alpha_v}^{m-j}{\beta_v}^jT^{\deg(v)} )^{-1},
\end{equation}
whereas, for a ramified prime $v$, its local factor is
\begin{equation}\label{eq:LocalSymBad}
(1 - a_v(E)^mT^{\deg(v)} )^{-1},
\end{equation}
with, again, $a_v(E)=1, -1,$ or $0$, depending on the reduction type of $E/K$ at $v$ as before. 

Recall that $E/K$ is assumed to have nonconstant $j$-invariant. As a result, $L((\Sym^mE)/K, T)$ is a polynomial in $T$ (see~\cite[ \S3.1.7]{Ulm05}, as well as the introduction of \cite{Kat02}). More precisely, $L((\Sym^mE)/K, T) \in 1 + T\mathbb{Z}[T]$. We define $\nu_m$ to be the degree of $L((\Sym^mE)/K, T)$ and write 
\begin{equation}\label{eq:LfunctionEm}
L((\Sym^mE)/K, T) = \prod_{j=1}^{\nu_m}(1-\gamma_{m,j}T),
\end{equation}
for some complex numbers $\gamma_{m, j}$. For $m=1$, we also use the notation $N_{E/K}:= \nu_1 = \deg(L(E/K, T))$. Deligne's purity result~\cite[\S3.2.3]{Del80} implies that $\gamma_{m,j}$ is of absolute value $q^{(m+1)/2}$ under any complex embedding of $\overline{\mathbb{Q}_{\ell}}$. 
Therefore, we can define the angles $\theta_{m, j}$ by the equation
\begin{equation}\label{eq:InverseZeroForAllM}
\gamma_{m, j} = q^{(m+1)/2}\ee^{\ii \theta_{m, j}},
\end{equation}
for all $j = 1, \dots, \nu_m$ and for each $m\ge1$. Note that $\nu_m$ can be given explicitly by the formula (see~\cite[\S3.1.7]{Ulm05})
\begin{equation} \label{eq:NuM}
\nu_m = (2g_C - 2)(m+1) + \deg(\mathfrak{n}_m).
\end{equation}
Here, $g_C$ is the genus of $C/k$ and $\mathfrak{n}_m$ is the global Artin conductor of $\Sym^m(\rho_{\ell,E/K})$. We will need the following lemma, which says that $\nu_m$ grows at most linearly with $m$. 

\begin{lemma}\label{BoundArtinConductor}
There exists a positive constant $C_{E/K}$ which depends only on $E/K$ such that
\[
\nu_m \le C_{E/K}\cdot m
\]
for all $m\ge 1$.
\end{lemma}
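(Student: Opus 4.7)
The plan is to reduce the estimate to a local bound at each place of bad reduction, using the explicit formula
\[
\nu_m = (2g_C - 2)(m+1) + \deg(\mathfrak{n}_m)
\]
already recorded in \eqref{eq:NuM}. Since $(2g_C-2)(m+1) \le 2(g_C+1) m$ for $m\ge 1$, the whole problem is to show that
\[
\deg(\mathfrak{n}_m) \ll_{E/K} m.
\]

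First I would decompose the global conductor as a sum of local contributions
\[
\deg(\mathfrak{n}_m) = \sum_{v \in S} \deg(v)\, f_v\bigl(\Sym^m \rho_{\ell, E/K}\bigr),
\]
where $S$ is the (finite) set of closed points of $C$ where $E/K$ has bad reduction. This reduces the task to bounding each local conductor exponent $f_v(\Sym^m \rho_{\ell,E/K})$ linearly in $m$, uniformly over $v\in S$.

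Next I would split $f_v = \bigl(\dim V_m - \dim V_m^{I_v}\bigr) + \mathrm{Sw}_v(\Sym^m \rho_{\ell, E/K})$, where $V_m:=\Sym^m(V_\ell(E))$ has dimension $m+1$. The tame part is automatically bounded by $\dim V_m = m+1$. For the wild part, I would use two classical facts about elliptic curves: the image of the inertia group $I_v$ in $\GL(V_\ell(E))$ is a compact subgroup whose wild inertia quotient $P_v$ acts through a finite group (because $\ell\neq p$ and by the criterion of N\'eron--Ogg--Shafarevich together with Grothendieck's $\ell$-adic monodromy theorem for elliptic curves), and the breaks (Swan slopes) of $V_\ell(E)$ at $v$ are finite rationals $s_v^{(1)},\dots,s_v^{(r_v)}$ depending only on $E/K$ and $v$. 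Then $\Sym^m$ of a representation with these breaks has all its breaks bounded by $\max_i s_v^{(i)}$, and its Swan conductor is bounded by $(m+1)\cdot \max_i s_v^{(i)}$. Combining this with the tame bound gives
\[
f_v(\Sym^m \rho_{\ell, E/K}) \le (m+1)\bigl(1 + s_v^{\max}\bigr),
\]
and summing over the finite set $S$ produces the required linear bound with $C_{E/K}$ depending only on the finite data $(g_C, \#S, \deg(v), s_v^{\max})_{v\in S}$.

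The main obstacle is the wild Swan estimate: one has to justify that taking a symmetric power does not inflate the slopes beyond the original ones, which is a standard property of the upper-numbering filtration on inertia but still needs to be invoked carefully (e.g.\ via the characterization of Swan conductor through the action of the breaks of $I_v^{(\lambda)}$). Everything else is elementary bookkeeping once this compatibility is in hand.
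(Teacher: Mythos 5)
Your reduction to bounding the local conductor exponent $f_v(\Sym^m\rho_{\ell,E/K})$ at each bad place (via the Euler-characteristic formula \eqref{eq:NuM}) is exactly what the paper does, and both arguments ultimately rest on the same two facts: $\dim\Sym^m V_\ell(E)=m+1$ grows linearly, while the ramification at $v$ is bounded independently of $m$ because, with $\ell\neq p$, the wild inertia $P_v$ acts on $V_\ell(E)$ through a finite quotient. Where you diverge is in how that second fact is packaged. The paper invokes Serre's lower-numbering formula
\[
f_v(V)=\sum_{i\ge 0}\frac{g_i}{g_0}\,\mathrm{codim}\,V^{G_i},
\]
truncates the sum at the first $i_\infty$ with $V^{G_i}=V$, and bounds each of the at most $i_\infty+1$ terms trivially by $\dim V=m+1$; the point is that $i_\infty$ depends only on $E/K$ and $v$, because the $G_i$ have trivial image in $\GL(V_\ell(E))$ (and hence on every symmetric power) once $i$ is large. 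You instead split $f_v$ into its tame and Swan parts, bound the tame part by $\dim V$, and bound the Swan part by $(m+1)\cdot s_v^{\max}$ using the upper-numbering break compatibility for symmetric (or tensor) powers. Both are correct. Your route uses marginally heavier machinery (breaks, upper numbering) but makes the ``$\Sym^m$ does not deepen the ramification'' principle structurally explicit; the paper's route is more elementary in that it cites a single classical formula and bounds the whole sum at once, but it leaves the $m$-independence of $i_\infty$ slightly implicit. The mathematical content is the same.
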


\begin{proof}
Suppose that $E/K$ has bad reduction at $v$ and let $G: = D_v$ be the decomposition group at $v$. Thanks to the Euler characteristic formula \eqref{eq:NuM}, it is enough to show that the exponent $f_v(V)$ of the \emph{local} Artin conductor of $V:=\Sym^m(\rho_{\ell, E/K})$ at $v$ is bounded by a constant times $\deg(V) = m+1$. From Corollary $1$ of Proposition $2$ in~\cite[Chapter VI, \S2]{Ser79},
\[
f_v(V) = \sum_{i\ge 0} \frac{g_i}{g_0} \mathrm{codim}V^{G_i},
\]
where $G_i$ is the $i$-th ramification group of $G$ and $g_i=\# G_i$. Let $i_{\infty}$ be the smallest integer for which $V^{G_i}=V$  for all $i\ge i_{\infty}$, so that the above sum is a finite sum of $i_{\infty}$ terms. Then,
\[
f_v(V) \le\sum_{i=0}^{i_{\infty}} \mathrm{codim}V^{G_i} \le \sum_{i=0}^{i_{\infty}} \dim{V}  = (m+1) (i_{\infty} + 1).
\]
This completes the proof.
\end{proof}


\subsection{An explicit formula}\label{section:explicitformula}

Throughout this section we fix an elliptic curve $E$ over $K$. For readibility we do not indicate the dependency on $E$ of the objects we introduce
 whenever it is clear from context. 

Following Sarnak \cite{Sar07}, we will consider a function $V:[0, \pi] \longrightarrow \mathbb{R}$ and study the limiting distribution arising from 
\[
T_V(X):= \frac{X}{q^{X/2}}\sum_{\substack{\deg(v) \le X \\ v\text{ good}}} V(\theta_v)
\]
as $X\to\infty$. This is done by computing the Fourier expansion of $V$ using the functions 
\[
U_m(\theta) := \frac{\sin(m+1)\theta}{\sin\theta},
\]
for $m\ge0$. Indeed the family $\{ U_m\}_{m=0}^{\infty}$ forms an orthonormal basis of $L^2([0, \pi])$ with respect to the inner product
\[
\langle V_1, V_2\rangle := \frac2{\pi}\int_0^{\pi}V_1(\theta)V_2(\theta)\sin^2\theta
\, \dd\theta.
\]

Obviously computing quantities $T_{U_m}(X)$ is crucial and suffices to understand more generally $T_V(X)$ for functions $V$
that coincide with their Fourier expansion with respect to the family $\{U_m\}_{m\geq 0}$. We now proceed to computing explicitly 
$T_{U_m}(X)$ for $m\geq 1$.


The following result can be seen as a so-called ``explicit formula'' that relates a summation over primes to a summation 
over zeros of a certain $L$-function. It will be of crucial importance in the proof of the theorems stated in~\S\ref{section:mainresults}.

\begin{theorem}\label{thm:WienerIkehara}
Let $m\geq 1$ and $N\geq 1$ be integers. Let $\mathcal{E}(N) := 1$ if $N$ is even, and $\mathcal{E}(N):=0$ if $N$ is odd. Then one has:
\[
\frac{N}{q^{N/2}}\sum_{\deg(v) = N}U_m(\theta_v)
=
(-1)^{m+1}\mathcal{E}(N) - \sum_{j = 1}^{\nu_m}\ee^{\ii N \, \theta_{m, j}} + O_E(m^2 q^{-N/6})\,,
\]
\end{theorem}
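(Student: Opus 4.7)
The plan is to derive the explicit formula by comparing the two expressions for $\log L((\Sym^m E)/K, T)$ supplied by the Euler product \eqref{def:LFunction} (i.e.~\eqref{eq:LocalSym}--\eqref{eq:LocalSymBad}) and the polynomial factorization \eqref{eq:LfunctionEm}. Using the eigenvalue identity $\sum_{j=0}^m(\alpha_v^{m-j}\beta_v^j)^k = q^{km\deg v/2}U_m(k\theta_v)$ at good $v$, one expands
\[
\log L((\Sym^m E)/K, T) = \sum_{v\text{ good}}\sum_{k\geq 1}\frac{q^{km\deg v/2} U_m(k\theta_v)}{k}T^{k\deg v} + \sum_{v\text{ bad}}\sum_{k\geq 1}\frac{a_v^{km}}{k}T^{k\deg v},
\]
and matches coefficients of $T^N$ against $-\sum_{j,n}\gamma_{m,j}^n T^n/n$. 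Substituting $\gamma_{m,j}=q^{(m+1)/2}\ee^{\ii\theta_{m,j}}$ and isolating the $k=1$ good-reduction contribution yields the exact identity
\[
\frac{N}{q^{N/2}}\sum_{\substack{\deg v=N\\ v\text{ good}}}U_m(\theta_v) = -\sum_{j=1}^{\nu_m} \ee^{\ii N\theta_{m,j}} - R_2(N) - R_{\geq 3}(N) - R_{\text{bad}}(N),
\]
where $R_k(N)$ is the contribution from good $v$ of degree $N/k$.

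Next I would bound $R_{\geq 3}$ and $R_{\text{bad}}$ by crude estimates. Using $|U_m|\leq m+1$ and the Weil bound $\#\{v:\deg v = d\}\leq q^d/d + O(q^{d/2}/d)$, the $k$-th summand in $R_{\geq 3}$ is at most $(m+1)q^{-N(1/2 - 1/k)}$; summing over divisors $k\geq 3$ of $N$ gives $O(m\, q^{-N/6})$, absorbable in the stated error after accounting for the divisor function $\tau(N)\ll_\epsilon N^\epsilon$. Since the set of bad places is finite (depending on $E$) and $|a_v|\leq 1$ there, $|R_{\text{bad}}|\ll_E q^{-N(m+1)/2}$, completely negligible.

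The crux is the $k=2$ term, which should supply the discrete contribution $(-1)^{m+1}\mathcal{E}(N)$ and is vacuous when $N$ is odd. When $N=2M$ is even, my plan is to exploit the Chebyshev-type identity
\[
U_m(2\theta) = \sum_{j=0}^m (-1)^{m-j}U_{2j}(\theta),
\]
proved inductively from the three-term recursion $U_{n+1}(\theta)=2\cos\theta\,U_n(\theta)-U_{n-1}(\theta)$. The $j=0$ piece contributes $(-1)^m\pi_{\text{good}}(M) = (-1)^m q^M/M + O(q^{M/2}/M)$ by the function field prime number theorem. For $j\geq 1$ I apply the very identity derived in the first paragraph at level $n=2j$ and bound every non-main term trivially, together with $\nu_{2j}\ll j$ from Lemma \ref{BoundArtinConductor}, to obtain $\left|\sum_{\deg v=M,\text{good}}U_{2j}(\theta_v)\right|\ll j\, q^{M/2}/M$. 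Summing over $1\leq j\leq m$ and renormalizing by $M/q^M$ yields $R_2(N) = (-1)^m + O(m^2 q^{-M/2}) = (-1)^m + O(m^2 q^{-N/4})$, which is absorbed into the claimed error $O(m^2 q^{-N/6})$.

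The main expected obstacle is this $k=2$ step: one must identify the precise sign $(-1)^{m+1}$ rather than merely bound it, since a naive triangle-inequality bound on $R_2$ only gives $O(m)$. The Chebyshev reorganization and the recursive use of the explicit formula at higher symmetric powers are what allow the main term to be pinned down; the factor $m^2$ in the final error is the natural cost of summing $m$ crude bounds, one for each level $n=2j$.
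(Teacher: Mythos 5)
Your proposal is correct and follows essentially the same route as the paper's own proof: the same decomposition over divisors $d\mid N$ after taking logarithmic derivatives, the same trivial bound giving $O(m\,q^{-N/6})$ for the $d\leq N/3$ contribution, the same Chebyshev-type identity $U_m(2\theta)=\sum_{j=0}^m(-1)^{m-j}U_{2j}(\theta)$ (the paper's Lemma~\ref{thm:FourierExpansion}), and the same recursive use of the crude bound $\sum_{\deg v=M}U_k(\theta_v)\ll_E k\,q^{M/2}/M$ (the paper's Corollary~\ref{thm:SolvedTrace}, which rests on Lemma~\ref{BoundArtinConductor}) to extract the $(-1)^m$ from the $d=N/2$ term while relegating the rest to $O(m^2 q^{-N/4})$.
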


To prove the theorem, we need some preliminary results.

\begin{proposition}\label{thm:Trace}
With notation as in Theorem~\ref{thm:WienerIkehara}, one has:
\begin{equation}\label{Prop22}
-
\sum_{j=1}^{\nu_m}\ee^{\ii \, N \theta_{m,j}}
=
q^{-N/2}
\sum_{d|N}
d
\sum_{\substack{\deg(v)=d \\ v\, \rm{ good}}}
U_m(\textstyle\frac Nd\theta_v) + O_E(q^{-(m+1)N/2})\,,
\end{equation}
\end{proposition}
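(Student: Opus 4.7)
The plan is to extract the identity by computing the formal logarithm of $L((\Sym^mE)/K,T)$ in two different ways and equating coefficients of $T^N$. On the factored side \eqref{eq:LfunctionEm}, together with \eqref{eq:InverseZeroForAllM}, one has
\[
[T^N] \log L((\Sym^mE)/K,T) = -\frac{1}{N}\sum_{j=1}^{\nu_m}\gamma_{m,j}^N = -\frac{q^{N(m+1)/2}}{N}\sum_{j=1}^{\nu_m}\ee^{\ii N\theta_{m,j}}.
\]
Multiplying by $-Nq^{-N(m+1)/2}$ recovers the left-hand side of \eqref{Prop22}.

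For the Euler product side \eqref{def:LFunction}, at a good place $v$ the local factor \eqref{eq:LocalSym} contributes
\[
\sum_{k\ge1}\frac{T^{k\deg v}}{k}\sum_{j=0}^{m}\alpha_v^{k(m-j)}\beta_v^{kj}
\]
to $\log L$. Since $\alpha_v\beta_v = q^{\deg v}$ and $\alpha_v = q^{\deg v/2}\ee^{\ii\theta_v}$, a direct geometric-series computation reduces the inner sum to $q^{km\deg v/2}\,U_m(k\theta_v)$ via the elementary identity $\sum_{j=0}^m\ee^{\ii k(m-2j)\theta}=\sin((m+1)k\theta)/\sin(k\theta)$. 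Grouping the terms with $k\deg v = N$ gives
\[
[T^N](\log L)_{\mathrm{good}} = \frac{q^{Nm/2}}{N}\sum_{d\mid N}d\sum_{\substack{\deg v = d\\ v\text{ good}}}U_m(\tfrac{N}{d}\theta_v),
\]
which after multiplication by $-Nq^{-N(m+1)/2}$ matches the main term on the right-hand side of \eqref{Prop22}.

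It then remains to estimate the bad-place contribution. From \eqref{eq:LocalSymBad}, with $a_v(E)\in\{-1,0,1\}$,
\[
[T^N](\log L)_{\mathrm{bad}} = \frac{1}{N}\sum_{d\mid N}d\sum_{\substack{\deg v = d\\ v\text{ bad}}}a_v(E)^{mN/d},
\]
which the triangle inequality bounds by $N^{-1}\sum_{v\text{ bad}}\deg v$. Because $E/K$ has only finitely many places of bad reduction, this quantity is $O_E(N^{-1})$; multiplying by $-Nq^{-N(m+1)/2}$ yields the announced error term $O_E(q^{-(m+1)N/2})$. The argument presents no real obstacle: it is essentially bookkeeping in formal power series, pinned together by the trigonometric identity that converts the trace of $\Sym^m\Frob_v^k$ into the value $q^{km\deg v/2}\,U_m(k\theta_v)$, while the contribution of bad places is uniformly controlled by the degree of the conductor.
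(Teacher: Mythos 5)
Your proof is correct and follows essentially the same route as the paper: the paper expands $T\,L'/L$ and compares the coefficient of $T^N$ between the Euler-product form and the factored form, while you compare the coefficient of $T^N$ in $\log L$; these differ only by the bookkeeping factor $N$. One small slip: the correct normalizing multiplier is $Nq^{-N(m+1)/2}$ (not $-Nq^{-N(m+1)/2}$), since $[T^N]\log L = -\tfrac{1}{N}\sum_j\gamma_{m,j}^N$ already carries the minus sign needed to produce $-\sum_j\ee^{\ii N\theta_{m,j}}$; you inserted an extra $-1$ on both sides of the equation so the final identity is still correct, but as written the two intermediate sentences each claim the wrong sign.
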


\begin{proof}
We take the log derivative of $L((\Sym^mE)/K, T)$ using the Euler factors \eqref{eq:LocalSym} and \eqref{eq:LocalSymBad} and obtain
\begin{equation}\label{LogDerivative}
T\frac{L'}{L}((\Sym^mE)/K, T) 
= 
\sum_{d=1}^{\infty}
\sum_{\substack{\deg(v) = d \\ v \text{ good}}}
\sum_{j=0}^m
\frac{d\,{\alpha_v}^{m-j}{\beta_v}^jT^d}{1- {\alpha_v}^{m-j}{\beta_v}^jT^d}
+
\sum_{d=1}^{\infty}
\sum_{\substack{\deg(v) = d \\ v \text{ bad}}}
\frac{d\,{a_v}^mT^d}{1-{a_v}^mT^d}\,.
\end{equation}

 To simplify the summation over good primes $v$, we rewrite $U_m$ as
\begin{equation}\label{eq:UmExp}
U_m(\theta) = \sum_{j=0}^m \ee^{\ii(m-2j)\theta}\,.
\end{equation}
This gives, for any positive integer $k\ge1$, 
\begin{align*}
U_m(k\theta_v) &= 
\sum_{j=0}^m\ee^{\ii k(m-2j)\theta_v}  
= 
{q_v}^{-km/2} \sum_{j=0}^m \left({q_v}^{k(m-j)/2}\ee^{\ii k(m-j)\theta_v}\right) 
\left({q_v}^{kj/2} \ee^{-\ii kj\theta_v}\right)  \\
&=(q^{m/2})^{-dk}\sum_{j=0}^m {\alpha_v}^{k(m-j)}{\beta_v}^{kj}\,.
\end{align*}
Thus
\begin{equation}\label{eq:LocalTrace}
(q^{m/2})^{dk}U_m(k\theta_v) = \sum_{j=0}^m {\alpha_v}^{k(m-j)}{\beta_v}^{kj}\,.
\end{equation}
The summation over good primes $v$ in~\eqref{LogDerivative} becomes 
\begin{align}\label{LocalGood}
\sum_{d=1}^{\infty}
\sum_{\substack{\deg(v) = d \\ v \text{ good}}}
\sum_{j=0}^m
\frac{d\,{\alpha_v}^{m-j}{\beta_v}^jT^d}{1- {\alpha_v}^{m-j}{\beta_v}^jT^d}
&=
\sum_{d, k=1}^{\infty}
\sum_{\substack{\deg(v) = d \\ v \text{ good}}}
d
\sum_{j=0}^m
{\alpha_v}^{k(m-j)}{\beta_v}^{kj}
T^{dk} \notag \\
&=
\sum_{N=1}^{\infty}
\left(
(q^{m/2})^N
\sum_{d|N}
d
\sum_{\substack{\deg(v) = d \\ v \text{ good}}}
U_m(\textstyle\frac Nd\theta_v)
\right)
T^N,
\end{align}
where the last equality follows from \eqref{eq:LocalTrace}.
Similarly the contribution of bad primes $v$ to~\eqref{LogDerivative}  is 
\begin{align}\label{LocalBad}
\sum_{d=1}^{\infty}
\sum_{\substack{\deg(v) = d \\ v \text{ bad}}}
\frac{d\,{a_v}^mT^d}{1-{a_v}^mT^d} &=
\sum_{N=1}^{\infty}
\left(
\sum_{d|N}
d
\sum_{\substack{\deg(v) = d \\ v \text{ bad}}}
{a_v}^{mN/d}
\right)
T^N\,.
\end{align}

Since $a_v = 0$ or $\pm1$ and there are only finitely many bad primes $v$ we see that the coefficient of $T^N$ above is bounded by a constant depending on $E/K$. Using this and \eqref{LocalGood}, we simplify \eqref{LogDerivative} as follows:
\begin{equation}\label{LogDerivativeLocal}
T\frac{L'}{L}((\Sym^mE)/K, T)
= 
\sum_{N=1}^{\infty}
\left(
(q^{m/2})^N
\sum_{d|N}
d
\sum_{\substack{\deg(v) = d \\ v \text{ good}}}
U_m(\textstyle\frac Nd\theta_v)
+O_E(1)
\right)
T^N\,.
\end{equation}

On the other hand, taking the log derivative of \eqref{eq:LfunctionEm} and comparing the $N$-th coefficient of $T^N$ with \eqref{LogDerivativeLocal}, we conclude that
\[
-\sum_{j=1}^{\nu_m}{\gamma_{m, j}}^N
=
(q^{m/2})^N
\sum_{d|N}
d
\sum_{\substack{\deg(v)=d \\ v \text{ good}}}
U_m(\textstyle\frac Nd\theta_v) + O_E(1)\,.
\]

Dividing out both sides by $q^{(m+1)N/2}$ this finishes the proof of Proposition \ref{thm:Trace}.
\end{proof}

\begin{corollary}\label{thm:SolvedTrace}
With notation as in Theorem~\ref{thm:WienerIkehara}, one has:
\[
\sum_{\deg(v)=N} U_m(\theta_v) 
\ll_E m\frac{q^{N/2}}{N}\,.
\]
\end{corollary}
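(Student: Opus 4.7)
The plan is to extract the bound directly from the explicit formula of Proposition \ref{thm:Trace} by isolating the ``main term'' corresponding to $d = N$ on the right-hand side and bounding everything else trivially. After multiplying \eqref{Prop22} by $q^{N/2}$ and solving for the $d = N$ contribution, I would write
\[
N\sum_{\substack{\deg(v)=N \\ v\text{ good}}}U_m(\theta_v)
= -q^{N/2}\sum_{j=1}^{\nu_m}\ee^{\ii N \theta_{m,j}}
- \sum_{\substack{d\mid N \\ d<N}} d\sum_{\substack{\deg(v)=d \\ v\text{ good}}}U_m(\textstyle\frac{N}{d}\theta_v)
+ O_E(q^{-(m-1)N/2}).
\]

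Next I would bound the two sums on the right one at a time. For the spectral sum, each $|\ee^{\ii N\theta_{m,j}}|=1$, so its modulus is at most $\nu_m$, and Lemma \ref{BoundArtinConductor} gives $\nu_m \le C_{E/K}\cdot m$; this contributes $\ll_E m\,q^{N/2}$. For the diagonal-off terms, I would use the trivial bound $|U_m(\theta)|\le m+1$ coming from \eqref{eq:UmExp} together with the elementary estimate $\#\{v:\deg(v)=d\}\le q^{d}/d + O(q^{d/2}/d)$ (the prime polynomial theorem for the curve $C$, which follows from the Riemann hypothesis over $\mathbb{F}_q$). Since every proper divisor $d$ of $N$ satisfies $d\le N/2$, a geometric-series estimate yields
\[
\sum_{\substack{d\mid N \\ d<N}}d\cdot(m+1)\cdot\#\{v:\deg(v)=d\}
\ll (m+1)\sum_{d\le N/2} q^{d} \ll m\,q^{N/2}.
\]

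Combining these two estimates, dividing by $N$, and absorbing the $O_E(q^{-(m-1)N/2}/N)$ error term (negligible for $m\ge 1$), one obtains
\[
\sum_{\substack{\deg(v)=N \\ v\text{ good}}}U_m(\theta_v) \ll_E m\,\frac{q^{N/2}}{N}.
\]
Finally I would add the contribution of the finitely many bad places of degree $N$, which is bounded by $(m+1)$ times a constant depending only on $E/K$ and is absorbed into the stated bound.

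I do not expect any genuine obstacle in this argument: the explicit formula of Proposition \ref{thm:Trace} has already done the heavy lifting, and the proof is essentially an exercise in inverting the M\"obius-type divisor sum with trivial estimates. The only mild subtlety is making sure the linear-in-$m$ bound $\nu_m\ll_E m$ from Lemma \ref{BoundArtinConductor} is what produces the final $m$ in the corollary; in particular one sees that the spectral side and the off-diagonal side contribute the same order of magnitude $m\,q^{N/2}$, which is sharp given the trivial bounds used.
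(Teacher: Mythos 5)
Your proposal matches the paper's proof in all essentials: both isolate the $d=N$ term from Proposition~\ref{thm:Trace}, bound the spectral sum $\sum_j \ee^{\ii N\theta_{m,j}}$ via Lemma~\ref{BoundArtinConductor}, and handle the proper divisors $d\le N/2$ with the trivial bound $|U_m|\le m+1$ together with the place-counting estimate~\eqref{PrimeNumberTheorem}. One small arithmetic slip: after multiplying~\eqref{Prop22} by $q^{N/2}$ the error term should be $O_E(q^{-mN/2})$ rather than $O_E(q^{-(m-1)N/2})$, but since either is $O_E(1)\ll mq^{N/2}$ this does not affect the argument.
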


\begin{proof}
In \eqref{Prop22}, we split out the term $d=N$ to get
\begin{equation}\label{eq:SolvedTrace}
\frac{N}{q^{N/2}}\sum_{\deg(v)=N} U_m(\theta_v) 
=
- \sum_{j=1}^{\nu_m}\ee^{\ii \, N \theta_{m,j}}
-
q^{-N/2}
\sum_{\substack{d|N \\ d\le N/2}}
d
\sum_{\substack{\deg(v)=d \\ v \text{ good}}}
U_m(\textstyle\frac Nd\theta_v) + O_E(q^{-N/2})\,.
\end{equation}

Then, it is enough to show that the right side is bounded by $(m+1)$ times an absolute constant. From Lemma \ref{BoundArtinConductor}, we have
\begin{equation}\label{eq:1}
\left|
-\sum_{j=1}^{\nu_m}\ee^{\ii \, N \theta_{m,j}} 
\right|
\le C_{E/K}m\,.
\end{equation}

To bound the rest of the summation in the right side of \eqref{eq:SolvedTrace}, we use the trivial bound 
\begin{equation}\label{UmTrivialBound}
|U_m(\theta)| \le m+1,
\end{equation}
which is immediate from \eqref{eq:UmExp}, and the count of places of fixed degree in $K=\mathbb{F}_q(C)$ (see e.g~\cite[Prop.~$6.3$]{Br92})
which yields the estimate
\begin{equation}\label{PrimeNumberTheorem}
\Big|\big(\sum_{\deg(v) = d}1\big) -\frac{q^d}d\Big|\leq \frac{2g_C+1}{1-q^{-1}}q^{d/2}\,.
\end{equation}

The second term on the right hand side of \eqref{eq:SolvedTrace} is then bounded above in absolute value by
$$ (m+1) q^{-N/2}
\sum_{\substack{d|N \\ d\le N/2}}
d
\sum_{\substack{\deg(v)=d \\ v \text{ good}}} 1 \ll_C m q^{-N/2}\sum_{\substack{d|N \\ d\le N/2}}q^d \ll m.  $$

The proof follows.
%
%
\end{proof}

 We are now ready to prove Theorem~\ref{thm:WienerIkehara}.

\begin{proof}[Proof of Theorem \ref{thm:WienerIkehara}]
We begin with \eqref{eq:SolvedTrace} and separate out the term $d=N/2$, which exists only when $N$ is even. (Recall, by definition, $\mathcal{E}(N) = 1$ if $N$ is even and $0$ otherwise.)
\begin{align}\label{eq:UmTotal}
\frac{N}{q^{N/2}} \sum_{\deg(v)=N} U_m(\theta_v) 
=&
- \sum_{j=1}^{\nu_m}\ee^{\ii \, N \theta_{m,j}} 
-
\frac1{q^{N/2}}
\sum_{\substack{d|N \\ d\le N/2}}
d
\sum_{\deg(v)=d}
U_m(\textstyle\frac Nd\theta_v) + O_E(q^{-N/2}) \\
=&
- \sum_{j=1}^{\nu_m}\ee^{\ii \, N \theta_{m,j}} 
-
\mathcal{E}(N)
\frac{N/2}{q^{N/2}}
\sum_{\deg(v)=N/2}
U_m(2\theta_v) \nonumber\\
  &-
\frac1{q^{N/2}}
\sum_{\substack{d|N \\ d\le N/3}}
d
\sum_{\substack{\deg(v)=d \\ v \text{ good}}}
U_m(\textstyle\frac Nd\theta_v) + O_E(q^{-N/2}) \,.\nonumber
\end{align}

First, we handle the terms with $d\le N/3$. From \eqref{UmTrivialBound} and \eqref{PrimeNumberTheorem} again, 
\begin{align}\label{eq:DLessThanNOver3}
\left|\frac1{q^{N/2}}
\sum_{\substack{d|N \\ d\le N/3}}
d
\sum_{\substack{\deg(v)=d \\ v \text{ good}}}
U_m(\textstyle\frac Nd\theta_v) 
\right|
&\le
\displaystyle
\frac{m+1}{q^{N/2}}
\sum_{\substack{d|N \\ d\le N/3}}
d
\left(
\frac{q^d}{d} + O_C\left(\frac{q^{d/2}}{d}\right)
\right) \\ 
&\ll 
m q^{-N/6}\,.\nonumber 
\end{align}
Next, to estimate $\sum_{\deg(v)=N/2}U_m(2\theta_v)$, we consider the following Fourier expansion of $V(\theta):= U_m(2\theta)$ with respect to the orthonormal basis $\{U_k\}_{k=0}^{\infty}$
\[
V(\theta) 
= 
\sum_{k=0}^\infty \langle V, U_k \rangle U_k(\theta).
\]

One has:
\begin{align*}
\sum_{\deg(v)=N/2} 
V(\theta_v) 
&=
\sum_{k=0}^\infty \langle V, U_k \rangle
\sum_{\deg(v)=N/2} 
U_k(\theta_v) \\
&= \langle V, U_0 \rangle 
\sum_{\deg(v)=N/2} 
1
+
\sum_{k=1}^\infty \langle V, U_k \rangle
\sum_{\deg(v)=N/2} 
U_k(\theta_v)  \\
&=
\langle V, U_0 \rangle
\left( \frac{q^{N/2}}{N/2} + O_C(q^{N/4}/N) \right)
+
\sum_{k=1}^\infty \langle V, U_k \rangle
\sum_{\deg(v)=N/2}
U_k(\theta_v)  \,.
\end{align*}
Note that Lemma \ref{thm:FourierExpansion} below provides the Fourier coefficients of $V$ explicitly. In particular,
\begin{equation}\label{eq:U0}
\langle V, U_0 \rangle
= (-1)^m.
\end{equation}

Moreover, Corollary \ref{thm:SolvedTrace} gives
\[
\sum_{\deg(v)=N/2}
U_k(\theta_v)  
\ll_E 
k\frac{q^{N/4}}{N}\,.
\]
 Again, by Lemma \ref{thm:FourierExpansion}, we can determine $\langle V, U_k \rangle$ for all $k\ge1$. We deduce
\begin{align}
\sum_{k=1}^\infty \langle V, U_k \rangle
\sum_{\deg(v)=N/2}
U_k(\theta_v)  
&\ll_E
\frac{q^{N/4}}N\sum_{k=1}^\infty |\langle V, U_k \rangle|
k
\notag \\
&\ll_E
\frac{q^{N/4}}N m^2.
\label{eq:HigherV}
\end{align}
From \eqref{eq:U0} and \eqref{eq:HigherV}, we obtain
\begin{equation}\label{eq:DNover2}
\frac{N/2}{q^{N/2}} \sum_{\deg(v)=N/2} U_m(2\theta_v)  
=(-1)^m + O_E(m^2q^{-N/4})\,.
\end{equation}

Collect \eqref{eq:DLessThanNOver3} and \eqref{eq:DNover2} and put them into \eqref{eq:UmTotal}, to obtain
\[
\frac{N}{q^{N/2}} \sum_{\deg(v)=N} U_m(\theta_v) 
=
- \sum_{j=1}^{\nu_m}\ee^{\ii \, N \theta_{m,j}} 
-
\mathcal{E}(N)(-1)^m + O_E(m^2q^{-N/6})\,.
\]
This finishes the proof of Theorem \ref{thm:WienerIkehara}.
\end{proof}

\begin{lemma}\label{thm:FourierExpansion} 
For $m\ge0$, 
\[
U_m(2\theta) = U_{2m}(\theta) - U_{2m-2}(\theta) + \cdots+(-1)^{m+1}U_2(\theta) + (-1)^m.
\]
\end{lemma}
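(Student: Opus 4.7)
My plan is to prove the identity by induction on $m$, after rewriting it in the compact form
\[
U_m(2\theta) \;=\; \sum_{k=0}^{m} (-1)^{m-k}\,U_{2k}(\theta),
\]
which matches the statement since $U_0(\theta)=1$ absorbs the trailing $(-1)^m$. The base case $m=0$ is immediate because both sides equal $1$.

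For the inductive step, assume the identity holds for $m-1$, i.e.\ $U_{m-1}(2\theta)=\sum_{k=0}^{m-1}(-1)^{m-1-k}U_{2k}(\theta)$. Subtracting this from what we want to prove for $m$, the only missing ingredient is the two-term identity
\[
U_m(2\theta) + U_{m-1}(2\theta) \;=\; U_{2m}(\theta).
\]
This reduction is the main structural move; once it is established, the sign-flip from the inductive hypothesis automatically rebuilds the alternating sum.

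To verify this two-term identity, I will pass from the $U_k$'s to sines via the defining formula $U_k(\theta)=\sin((k+1)\theta)/\sin\theta$. Clearing the denominator $\sin(2\theta)\sin\theta$, the claim becomes
\[
\sin\theta\,\bigl(\sin(2(m+1)\theta) + \sin(2m\theta)\bigr) \;=\; \sin(2\theta)\,\sin((2m+1)\theta).
\]
The sum-to-product formula with $A=2(m+1)\theta$ and $B=2m\theta$ yields $\sin(2(m+1)\theta)+\sin(2m\theta)=2\cos\theta\,\sin((2m+1)\theta)$, and multiplying by $\sin\theta$ gives the right-hand side in view of $\sin(2\theta)=2\sin\theta\cos\theta$.

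I do not anticipate a genuine obstacle: the argument is purely trigonometric and the induction compresses the combinatorics of the alternating sum into a single product-to-sum step. The only point requiring a little care is matching signs when peeling off the top term $U_{2m}(\theta)$ from the desired sum before invoking the inductive hypothesis, and checking that the boundary case $m=1$ (where the displayed ellipsis in the statement of the lemma degenerates) is correctly covered by the compact summation form.
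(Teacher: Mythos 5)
Your proof is correct and follows essentially the same route as the paper: both reduce the alternating-sum identity to the two-term recursion $U_m(2\theta)+U_{m-1}(2\theta)=U_{2m}(\theta)$ and then induct on $m$. The only cosmetic difference is how that recursion is verified --- you use the sine-quotient definition of $U_k$ together with a sum-to-product identity, whereas the paper invokes the exponential expansion $U_m(\theta)=\sum_{j=0}^m e^{i(m-2j)\theta}$; these are interchangeable and equally elementary.
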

\begin{proof}
Using \eqref{eq:UmExp}, one can prove that
\[ U_{m+1}(2\theta) + U_m(2\theta) = U_{2(m+1)}(\theta).  \]
Now, an easy induction on $m$ completes the proof. 
\end{proof}

 \subsection{Limiting distribution arising from smooth functions}

We now derive from Theorem~\ref{thm:WienerIkehara} a decomposition of $T_V(X)$ (under suitable hypotheses on the 
function $V$) from which we deduce the existence of a limiting distribution for $T_V(X)$.


Let $V:[0, \pi]\longrightarrow \mathbb{R}$ be a function and let $V_m:= \langle V, U_m \rangle$ be the $m$-th Fourier coefficient of $V$. 
To ensure convergence, we will assume that $V_m \ll m^{-3-\eta}$ for some $\eta > 0$. 
In particular this will guarantee that the Fourier expansion
\[
V(\theta) = \sum_{m=1}^{\infty} V_m U_m(\theta)
\]
converges (uniformly and absolutely) for $\theta \in [0, \pi]$, by the trivial bound $|U_m(\theta)|\leq m+1$. The reason why we require such a strong decay rate for $V_m$ is that 
 we also need to ensure the convergence of the error term in~\eqref{eq:ComputeTV}.

Assuming further that $\langle V, U_0 \rangle = 0$ we may apply Theorem \ref{thm:WienerIkehara} to get:
\begin{align}
T_V(X) &=
\frac{X}{q^{X/2}}
\sum_{N = 1}^X 
\sum_{\substack{\deg v = N\\ v \text{ good}}} 
V(\theta_v)
= 
\frac{X}{q^{X/2}}
\sum_{N = 1}^X \sum_{m=1}^{\infty} V_m 
\sum_{\substack{\deg v = N\\ v \text{ good}}} 
U_m(\theta_v)\nonumber \\
&= 
\frac{X}{q^{X/2}}
\sum_{N = 1}^X 
\frac{q^{N/2}}{N}
\left(
\mathcal{E}(N)\sum_{m=1}^{\infty} V_m (-1)^{m+1}
-
\sum_{m=1}^{\infty}V_m
\sum_{j = 1}^{\nu_m} \ee^{\ii N\theta_{m, j}} + 
O\left(\sum_{m=1}^{\infty}m^2 V_mq^{-N/6}\right)
\right)\,.\label{eq:ComputeTV}
\hspace{-1cm}
\end{align}
We break the last line into:
\[
T_V(X) = 
T^{(\mathrm{I})}_V(X)
+
T^{(\mathrm{II})}_V(X)
+O(X(\log X)^{-1}q^{-X/6}),
\]
where 
\begin{align*}
T^{(\mathrm{I})}_V(X) &:=
\frac{X}{q^{X/2}}
\sum_{N = 1}^X 
\frac{q^{N/2}}{N}
\mathcal{E}(N)
\left(
\sum_{m=1}^{\infty}
V_m(-1)^{m+1}
\right),\\
T^{(\mathrm{II})}_V(X)
&:=
-\frac{X}{q^{X/2}}
\sum_{N = 1}^X 
\frac{q^{N/2}}{N}
\left(
\sum_{m=1}^{\infty}
V_m\sum_{j = 1}^{\nu_m} \ee^{\ii N \theta_{m, j}}
\right).
\end{align*}
To simplify  $T^{(\mathrm{I})}_V(X)$ and $T^{(\mathrm{II})}_V(X)$
 further, we use~\cite[Cor.~$2.3$ and Cor.~$2.4$]{Cha08}. Since the statements are quite short we recall them without proof in the following lemma.
   
\begin{lemma}\label{EstimateOfDN}
Let $N\geq 1$ be an integer and let $\mathcal E(N)$ be as above. The following holds.
\begin{enumerate}
\item[(i)] Let
\begin{equation}\label{eq:cplusminus}
c_{\pm}(X) := 
\begin{cases}
q/(q-1) & \text{ for even $X$}, \\
\sqrt{q}/(q-1) & \text{ for odd $X$}.
\end{cases}
\end{equation}
Then
\[
 \frac{X}{q^{X/2}}\sum_{N=1}^X \mathcal{E}(N)\frac{q^{N/2}}{N} =c_{\pm}(X)+o(1)\,,
\]
as $X\rightarrow \infty$.
\item[(ii)] Let $\gamma:=\sqrt{q}\ee^{\ii \theta}$ be a complex number of argument $\theta\in[0,2\pi]$.
 Then\footnote{In~\cite{Cha08} this is stated under the assumption $\theta\in[0,\pi]$ but the conclusion obviously holds 
 more generally for any $\theta\in[0,2\pi]$ simply by applying complex conjugation.}
\[
\frac{X}{q^{X/2}}\sum_{N=1}^X\frac{\gamma^N}{N}=\frac{\gamma\ee^{\ii \theta X}}{\gamma-1}+o(1)\,,
\]
as $X\rightarrow\infty$.
\end{enumerate}
\end{lemma}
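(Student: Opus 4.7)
The plan is to treat both parts via a common template: in each case the summand $a_N/N$ has $|a_N|$ growing like $q^{N/2}$, so the sum $\sum_{N \leq X} a_N/N$ is dominated by the terms with $N$ close to $X$. Factoring out the largest term and recognising the remainder as an absolutely convergent geometric-type series of ratio $q^{-1/2}$ will give the stated closed forms after multiplication by $X/q^{X/2}$.

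For part (i), one restricts to even $N = 2k$ and is reduced to the asymptotic
\[
\sum_{k=1}^{K} \frac{q^k}{k} = \frac{q^{K+1}}{K(q-1)}\bigl(1 + o(1)\bigr) \quad \text{as } K\to\infty,
\]
which one proves by writing $\sum_{k=1}^K q^k/k = (q^K/K) \sum_{j=0}^{K-1} (K/(K-j))\, q^{-j}$ and splitting the $j$-sum at $j = \lfloor\sqrt{K}\rfloor$: for $j \leq \sqrt K$ one has $K/(K-j) = 1 + O(K^{-1/2})$, while the tail $j > \sqrt K$ contributes $O(q^{-\sqrt K})$ and is negligible. One then specialises $K = \lfloor X/2 \rfloor$ and carries out the arithmetic separately for $X$ even and $X$ odd: the factor $q^{X/2}$ in the denominator equals $q^K$ in the former case and $\sqrt{q}\cdot q^K$ in the latter, which accounts for the factor of $\sqrt q$ separating $c_+(X)$ from $c_-(X)$.

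For part (ii), the same template applies with $q^k$ replaced by $\gamma^N$. Since $|\gamma^{-1}| = q^{-1/2} < 1$, the geometric series $\sum_{j \geq 0} \gamma^{-j} = \gamma/(\gamma-1)$ converges absolutely, and the same head/tail splitting in $j$ yields
\[
\sum_{N=1}^{X} \frac{\gamma^N}{N} = \frac{\gamma^X}{X}\cdot\frac{\gamma}{\gamma-1} + o\!\left(\frac{q^{X/2}}{X}\right).
\]
Multiplying by $X/q^{X/2}$ and using $\gamma^X/q^{X/2} = \ee^{\ii \theta X}$ gives the claimed formula. The extension from $\theta\in[0,\pi]$ to $\theta\in[0,2\pi]$ is automatic by complex conjugation, as already noted in the footnote. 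The argument is elementary and uses no property of $L$-functions; the main challenge, if any, is the bookkeeping of the error terms, which has to be uniform enough to be summed against the Fourier coefficients $V_m$ and the multiplicities $\nu_m$ when the lemma is fed back into~\eqref{eq:ComputeTV}.
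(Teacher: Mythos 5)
Your proof is correct. Note, however, that the paper does not actually prove this lemma: it explicitly states the two assertions as Corollaries~2.3 and~2.4 of~\cite{Cha08} and ``recall[s] them without proof.'' So strictly speaking there is no proof in the paper to compare against; you have supplied a self-contained elementary argument for a statement the authors treat as external input.

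Your argument itself is sound and standard. For part~(i), restricting to $N = 2k$ gives $\tfrac12\sum_{k\le K}q^k/k$ with $K=\lfloor X/2\rfloor$; writing this as $(q^K/K)\sum_{j=0}^{K-1}\tfrac{K}{K-j}q^{-j}$ and splitting at $j=\lfloor\sqrt K\rfloor$ yields the geometric limit $q/(q-1)$ with error $O(K^{-1/2})$ from the head and $O(Kq^{-\sqrt K})$ from the tail, and the even/odd parity of $X$ correctly introduces the extra factor of $\sqrt q$ distinguishing $c_+(X)$ from $c_-(X)$. For part~(ii) the identical decomposition works because $|\gamma^{-j}| = q^{-j/2}$ is independent of $\theta$, so the $o(1)$ is in fact uniform in~$\theta$; that uniformity is exactly what is needed when the lemma is summed against $V_m\nu_m$ in~\eqref{eq:SII}, and your closing remark correctly flags this as the only real concern. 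One small point of care you glossed over: in part~(ii) the tail bound is $\sum_{\sqrt X< j\le X-1}\tfrac{X}{X-j}q^{-j/2}\le X\sum_{j>\sqrt X}q^{-j/2}\ll Xq^{-\sqrt X/2}$, and the head correction is $O(X^{-1}\sum_j jq^{-j/2})=O(1/X)$; both are $o(1)$, but it is worth writing them out since the factor $X/(X-j)$ is unbounded near $j=X-1$.
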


Applying (i) of the lemma we obtain
\begin{equation}\label{eq:SI}
T^{(\mathrm{I})}_V(X)
=
c_{\pm}(X) \sum_{m=1}^{\infty}(-1)^{m+1}V_m + o(1)\,.
\end{equation}

For $T^{(\mathrm{II})}_V(X)$, we apply (ii) of the lemma with $\gamma = \gamma_{m, j}/q^{m/2}$ (see \eqref{eq:InverseZeroForAllM}). Then, as $X\to\infty$,
\[
\frac{X}{q^{X/2}}
\sum_{N=1}^X
\frac{q^{N/2}}{N}
\ee^{\ii N \theta_{m, j}}
=
\frac{X}{q^{X/2}}
\sum_{N=1}^X
\frac{(\gamma_{m, j}/q^{m/2})^N}{N}
=
\frac{\gamma_{m, j}}{\gamma_{m, j}-q^{m/2}}\ee^{\ii \theta_{m, j}X} + o(1).
\]
Thus, 
\begin{equation}\label{eq:SII}
T^{(\mathrm{II})}_V(X)
=
\sum_{m=1}^{\infty}
V_m
\left(
\sum_{j = 1 }^{\nu_m} 
\frac{\gamma_{m, j}}{\gamma_{m, j}-q^{m/2}}\ee^{\ii \theta_{m, j}X}
\right)
+ o(1)\,,
\end{equation}
which converges absolutely by our assumption that $V_m \ll m^{-3-\eta}$.

Next we separate out the terms with $\gamma_{m, j} = q^{(m+1)/2}$, or equivalently 
$\theta_{m, j} = 0$. To do so, define
\begin{equation}\label{MmDefinition}
M_m(1) := \#\{ j \mid \theta_{m, j}=0 \text{ with } j=1, \dots, \nu_m\}.
\end{equation}
In other words, $M_m(1)$ is the multiplicity of the zero $T=q^{-(m+1)/2}$ in $L((\Sym^mE)/K, T)$. Then, from \eqref{eq:SII},
\begin{equation}\label{SIIAgain}
T^{(\mathrm{II})}_V(X)
=
-\sum_{m=1}^{\infty}
V_m\frac{\sqrt q}{\sqrt q - 1}M_m(1)
-\sum_{m=1}^{\infty}
V_m
\sum_{\substack{j = 1, \dots, \nu_m \\ \theta_{m,j}\neq 0}} 
\frac{\gamma_{m, j}}{\gamma_{m, j}-q^{m/2}}\ee^{\ii \theta_{m, j}X}
+ o(1).
\end{equation}
Combining \eqref{eq:SI} and \eqref{SIIAgain} we obtain the following result.

\begin{proposition}\label{thm:GeneralSV}
For a function $V:[0, \pi] \longrightarrow \mathbb{R}$ with $\langle V, U_0 \rangle = 0$ and $V_m \ll m^{-3-\eta}$ for some $\eta>0$, we have that
\[
T_V(X) = Q_V(X) + R_V(X) +o_{X\rightarrow \infty}(1)
\]
where
\begin{align*}
Q_V(X):= &
\sum_{m=1}^{\infty}
\left(
(-1)^{m+1}c_{\pm}(X) - \frac{\sqrt q}{\sqrt q - 1}M_m(1)
\right)
V_m\,,\\
R_V(X):=&
-\sum_{m=1}^{\infty}
V_m
\sum_{\substack{j = 1, \dots, \nu_m \\ \theta_{m,j}\neq 0}} 
\frac{\gamma_{m, j}}{\gamma_{m, j}-q^{m/2}}\ee^{\ii \theta_{m, j}X}\,.
\end{align*}
Here $c_{\pm}(X)$ and $M_m(1)$ are defined as in \eqref{eq:cplusminus} and \eqref{MmDefinition}. 
\end{proposition}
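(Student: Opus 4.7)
The plan is essentially to collect and formalize the derivation developed in the paragraphs preceding the statement. The first step is to expand $V$ in the orthonormal basis $\{U_m\}_{m\geq 0}$, using $\langle V,U_0\rangle=0$, so that $V(\theta)=\sum_{m=1}^{\infty}V_m U_m(\theta)$. The hypothesis $V_m\ll m^{-3-\eta}$ combined with $|U_m(\theta)|\leq m+1$ makes this expansion absolutely and uniformly convergent on $[0,\pi]$, which legitimizes exchanging the Fourier sum with the finite sum $\sum_{\deg v=N,\,v\text{ good}}$ in the definition of $T_V(X)$. After this exchange, I would apply Theorem~\ref{thm:WienerIkehara} to each inner sum $\tfrac{N}{q^{N/2}}\sum_{\deg v=N}U_m(\theta_v)$, producing three pieces: an $\mathcal{E}(N)(-1)^{m+1}$ term, a zero-sum $-\sum_{j=1}^{\nu_m}e^{iN\theta_{m,j}}$, and an error of size $O_E(m^2 q^{-N/6})$.

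The next step is to show that after summation, the error term contributes $o(1)$. Using Lemma~\ref{BoundArtinConductor} to bound $\nu_m\ll m$ (needed implicitly in the error) together with the decay $V_m\ll m^{-3-\eta}$, the total error is bounded by $\tfrac{X}{q^{X/2}}\sum_{N=1}^{X}\bigl(\sum_m m^2 V_m\bigr)q^{-N/6}$, which is $O(Xq^{-X/6})=o(1)$ because the series $\sum_m m^2 V_m$ converges. This justifies replacing $T_V(X)$ by $T_V^{(\mathrm{I})}(X)+T_V^{(\mathrm{II})}(X)+o(1)$ as set up in the text.

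For $T_V^{(\mathrm{I})}(X)$, I would apply Lemma~\ref{EstimateOfDN}(i) to each fixed $m$ to obtain $c_{\pm}(X)\sum_{m=1}^{\infty}(-1)^{m+1}V_m+o(1)$, the interchange of the limit with the sum over $m$ being valid because the partial sums of $V_m$ are uniformly controlled by $\sum_m|V_m|<\infty$ and the Lemma gives a uniform $o(1)$ error for each $m$ (one first truncates the Fourier series at $m\leq M$, applies the lemma, then lets $M\to\infty$ using the absolute convergence). For $T_V^{(\mathrm{II})}(X)$, I would apply Lemma~\ref{EstimateOfDN}(ii) with $\gamma=\gamma_{m,j}/q^{m/2}=e^{i\theta_{m,j}}\sqrt{q}$, yielding
\[
\frac{X}{q^{X/2}}\sum_{N=1}^{X}\frac{q^{N/2}}{N}e^{iN\theta_{m,j}}=\frac{\gamma_{m,j}}{\gamma_{m,j}-q^{m/2}}e^{i\theta_{m,j}X}+o(1).
\]
Summed over $j\le\nu_m$ and over $m\ge 1$, the resulting double sum converges absolutely thanks to the uniform bound $\bigl|\tfrac{\gamma_{m,j}}{\gamma_{m,j}-q^{m/2}}\bigr|\leq \tfrac{\sqrt{q}}{\sqrt{q}-1}$ for $\theta_{m,j}\neq 0$, together with $\nu_m\ll m$ and $V_m\ll m^{-3-\eta}$, again permitting the interchange of the limit with the summation.

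Finally, I would split the sum over $j$ according to whether $\theta_{m,j}=0$ or not: the $\theta_{m,j}=0$ contributions amount to $-\tfrac{\sqrt{q}}{\sqrt{q}-1}M_m(1)V_m$ (summed over $m$), which combines with the $(-1)^{m+1}c_{\pm}(X)V_m$ terms of $T_V^{(\mathrm{I})}(X)$ to produce $Q_V(X)$; the remaining terms with $\theta_{m,j}\neq 0$ constitute $R_V(X)$. The main obstacle is not any single estimate but rather the uniform justification of the various interchanges of limits and infinite sums: this has to be done cleanly using the hypothesis $V_m\ll m^{-3-\eta}$ together with Lemma~\ref{BoundArtinConductor}, and the power-saving decay $q^{-N/6}$ from Theorem~\ref{thm:WienerIkehara} is precisely what makes the error term summable against $\sum_m m^2 V_m$.
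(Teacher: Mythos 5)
Your proposal follows the paper's own proof essentially verbatim: expand $V$ in the $\{U_m\}$ basis, exchange the two sums using the decay of $V_m$, apply Theorem~\ref{thm:WienerIkehara} term by term, evaluate $T^{(\mathrm{I})}_V$ and $T^{(\mathrm{II})}_V$ via Lemma~\ref{EstimateOfDN}, and separate the $\theta_{m,j}=0$ contributions to form $Q_V$ and $R_V$. One small slip: when bounding the error you dropped the factor $q^{N/2}/N$ that multiplies the $O_E(m^2q^{-N/6})$ term after rearranging Theorem~\ref{thm:WienerIkehara}; the correct contribution is $\frac{X}{q^{X/2}}\sum_{N\le X}\frac{q^{N/2}}{N}\,q^{-N/6}\sum_m m^2|V_m|\ll q^{-X/6}$, which is still $o(1)$, so the conclusion is unaffected.
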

\begin{corollary}\label{thm:LimitingDistribution}
Let $V$ be as in Proposition~\ref{thm:GeneralSV}. The quantity $T_V(X)$ has a limiting distribution  $\mu_V$ in the sense of Definition~\ref{def:limdist}. Moreover, for $k\ge1$,
\begin{equation}\label{Moment}
\lim_{M\to\infty}\frac1M\sum_{X=1}^M T_V(X)^k = \int_{\mathbb{R}} t^k\mathrm{d}\mu_V(t).
\end{equation}
\end{corollary}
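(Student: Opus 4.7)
The plan is to use the decomposition
\[
T_V(X) = Q_V(X) + R_V(X) + o_{X\to\infty}(1)
\]
supplied by Proposition~\ref{thm:GeneralSV} together with the Kronecker--Weyl equidistribution theorem. Recall that $Q_V(X)$ depends only on the parity of $X$, while $R_V(X)$ is a series in the exponentials $\ee^{\ii\theta_{m,j}X}$ weighted by $-V_m\gamma_{m,j}/(\gamma_{m,j}-q^{m/2})$. The bound $|\gamma_{m,j}/(\gamma_{m,j}-q^{m/2})|=O(1)$, the estimate $\nu_m\ll_E m$ from Lemma~\ref{BoundArtinConductor}, and the hypothesis $V_m\ll m^{-3-\eta}$ together guarantee that the trigonometric series defining $R_V$ converges absolutely and uniformly in $X\in\Zz$. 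In particular $T_V$ is uniformly bounded, a fact I will reuse to deduce the moment statement.

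First I would handle the truncated trigonometric polynomial
\[
R_V^{(N)}(X):=-\sum_{m=1}^N V_m\sum_{\substack{1\le j\le\nu_m\\ \theta_{m,j}\neq 0}}\frac{\gamma_{m,j}}{\gamma_{m,j}-q^{m/2}}\ee^{\ii\theta_{m,j}X},
\]
which has only finitely many frequencies. By Kronecker--Weyl, the orbit $X\mapsto(\ee^{\ii\theta_{m,j}X})_{m\le N,\, j}$ equidistributes in the closure of the subgroup it generates inside a finite-dimensional torus, with respect to the normalized Haar measure on that closed subgroup; this produces a limiting measure $\mu_V^{(N)}$ along which the averages of $f\circ R_V^{(N)}$ converge for every continuous $f$. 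Splitting $\tfrac1M\sum_{X=1}^M$ into its even and odd sub-averages absorbs the step function $Q_V(X)$: on each parity class $Q_V$ takes a fixed constant value, so one applies the previous step to a shifted almost periodic function and averages the two limits. Passing $N\to\infty$ using the uniform convergence $\|R_V-R_V^{(N)}\|_\infty\to 0$ and the Lipschitz continuity of $f$ shows that the measures $\mu_V^{(N)}$ converge weakly to a probability measure $\mu_V$; the $o_{X\to\infty}(1)$ error in the decomposition of $T_V$ is absorbed in the same way, which is precisely the reason Definition~\ref{def:limdist} is formulated with Lipschitz test functions.

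Finally, the moment identity \eqref{Moment} follows from the uniform boundedness of $T_V$ established in the first paragraph: all $T_V(X)$ and (by weak convergence) the support of $\mu_V$ lie in a common compact interval $[-B,B]$, so the unbounded test function $t\mapsto t^k$ may be replaced by a bounded Lipschitz function coinciding with it on $[-B,B]$, reducing to the previous step. The main obstacle is the coupling of the parity-dependent step function $Q_V(X)$ with the almost-periodic series $R_V(X)$, which forces the splitting of the $X$-average into parity classes; once this is done the proof is a routine application of Kronecker--Weyl equidistribution to an absolutely convergent trigonometric series, much in the spirit of the classical Rubinstein--Sarnak framework.
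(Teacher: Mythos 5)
Your proposal is correct and follows essentially the same strategy as the paper: approximate $T_V$ by a truncated trigonometric polynomial, apply a discrete Kronecker--Weyl theorem to the truncation, pass to the limit, and deduce the moment identity from uniform boundedness by replacing $t^k$ with a Lipschitz function agreeing with it on a compact set containing the support.

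The one difference worth flagging is in the approximation step. The paper frames the argument through the notion of $B^2$-almost periodicity from \cite{ANS14}: one only needs the tail $\mathcal E_V(X,M)$ to have small \emph{mean square}, and then cites an adaptation of \cite[Th.~2.9]{ANS14} for the existence of the limiting distribution. You instead observe that under the decay hypothesis $V_m \ll m^{-3-\eta}$ the tail is small \emph{uniformly in $X$}, and then run the three-epsilon argument directly (Kronecker--Weyl for $R_V^{(N)}$, uniform convergence $\lVert R_V - R_V^{(N)}\rVert_\infty \to 0$, Lipschitz continuity of the test function). Since uniform approximation implies $B^2$-approximation, your argument is a sound, slightly more elementary and self-contained variant of the same idea; the paper's choice of the $B^2$-framework is more in keeping with the Rubinstein--Sarnak / ANS14 tradition and would survive weaker decay conditions where only $L^2$ control of the tail holds. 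Your parity-splitting device is just a reorganization of the paper's observation that $Q_V(X)$, depending only on the parity of $X$, is itself a trigonometric polynomial in $\ee^{\ii\pi X}$ and can therefore be absorbed into the truncation; both treatments are equally valid.
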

\begin{proof}
This is the function field analogue of~\cite[Lemmas 2.3 and 2.5]{Fio13}. We just need to make a slight adaption to handle the difference arising from the fact that our definition of limiting distribution (Definition~\ref{def:limdist}) uses a (discrete) summation, rather than an integral. 

First, we outline the proof of existence of a limiting distribution. This is well-known in the number field setting, originally described in \cite{RS94} and in \cite{ANS14} for more general situations. Following \cite{ANS14}, we say that a real-valued function $\phi(X)$ defined for all positive integers $X$ is called a \emph{$B^2$-almost periodic} function if for any $\epsilon>0$, there exists a real-valued trigonometric polynomial
\begin{equation}\label{TrigPoly}
P_{N(\epsilon)}(X) 
= \sum_{n=1}^{N(\epsilon)} r_n(\epsilon) \ee^{\ii \lambda_n(\epsilon) \, X}
\end{equation}
such that 
\begin{equation}\label{Bsquare}
\limsup_{Y\to\infty} \frac1{Y}\sum_{X=1}^Y |\phi(X) - P_{N(\epsilon)}(X)|^2 <\epsilon^2.
\end{equation}
(Here, $\lambda_n(\epsilon)$ is real and $r_n(\epsilon)$ is complex.) Using a discrete version of the Kronecker-Weyl theorem, it can be shown that any trigonometric polynomial of the form \eqref{TrigPoly} has a limiting distribution (in the sense of Definition~\ref{def:limdist}.) Also, an obvious adaptation of~\cite[Th.~2.9]{ANS14} shows that any $B^2$-almost periodic function has a limiting distribution. Next, for any (large) positive number $M$, we define (using the notation of 
Proposition~\ref{thm:GeneralSV})
\[
\mathcal{E}_V(X, M) = -\sum_{m \ge M}
V_m
\sum_{\substack{j = 1, \dots, \nu_m \\ \theta_{m,j}\neq 0}} 
\frac{\gamma_{m, j}}{\gamma_{m, j}-q^{m/2}}\ee^{\ii \theta_{m, j}X} + o_{X\rightarrow \infty}(1),
\]
so that $T_V(X)$ is a sum of a trigonometric polynomial and $\mathcal{E}_V(X, M)$. Using a trivial bound we easily obtain
\begin{equation}\label{ErrorBound}
|\mathcal{E}_V(X, M)| \le 
\frac{\sqrt q}{\sqrt q - 1} \sum_{m\ge M}
|V_m|\nu_m + o_{X\rightarrow \infty}(1).
\end{equation}
The decay condition on $V_m$ now implies its mean square can be made arbitrarily small by choosing $M$ large and we deduce that $T_V(X)$ is $B^2$-almost periodic, thus has a limiting distribution, say, $\mu_V$. In fact, a straightforward calculation shows that $T_V(X)$ is bounded for all $X$, therefore, $\mu_V$ is supported on a bounded set in $\mathbb{R}$. To prove \eqref{Moment}, as 
in~\cite[Lemma $2.5$]{Fio13}, we choose a Lipschitz continuous function $f$ equal to $t^k$ on a set containing the support of $\mu_V$ and to zero outside and apply \eqref{Lipschitz}. We omit the details.
\end{proof}

\subsection{Limiting distribution arising from $T_E(X)$}
In this subsection, we look into the special case $T_E(X)= T_V(X)$ i.e.~$V(\theta)=-U_1(\theta) = -2\cos\theta$, as 
defined in \eqref{eq:defofT(X)}. Thus we have $V_m = 0$ for all $m\ge 2$. Recall from \S\ref{sec:Notations} that
 $L(E/K, T)$ is a polynomial in $T$ of degree $\nu_1= N_{E/K}$. We will write its inverse zeros as 
$\gamma_j = q \ee^{\ii \theta_j}$ for $j = 1, \dots, N_{E/K}$, so that
\begin{equation}\label{CaseMEqual1}
L(E/K, T) = \prod_{j = 1}^{N_{E/K}} (1 - q \ee^{\ii \theta_j}T).
\end{equation}
We note from the definition \eqref{MmDefinition} that $M_1(1)= \mathrm{rank}(E/K)$, the analytic rank of $E/K$. 
Proposition \ref{thm:GeneralSV} gives the following statement in the case $V=-U_1$.

\begin{corollary}\label{thm:LimitDistForT}
With notation as above we have
\begin{equation}\label{LimitDistForT}
T_E(X) = Q_E(X) + R_E(X) + o_{X\to\infty}(1),
\end{equation}
where
\begin{align*}
Q_E(X):=& 
\frac{\sqrt q}{\sqrt q - 1}
\mathrm{rank}(E/K) - c_{\pm}(X)
\,,\\
R_E(X):=&
\sum_{\substack{j = 1, \dots, N_{E/K} \\ \theta_j\neq 0}} \frac{1}{1 - q^{-1/2}\ee^{-\ii\theta_j}}\ee^{\ii \theta_jX}=
\sum_{\substack{j = 1, \dots, N_{E/K} \\ \theta_j\neq 0}} \frac{\gamma_j}{\gamma_j - \sqrt q}\ee^{\ii \theta_jX}\,.
\end{align*}
\end{corollary}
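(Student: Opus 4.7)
The plan is to derive Corollary~\ref{thm:LimitDistForT} as a direct specialization of Proposition~\ref{thm:GeneralSV} to the function $V(\theta):=-U_1(\theta)=-2\cos\theta$, after identifying $T_E(X)$ with $T_V(X)$ for this $V$. Since $U_1(\theta)=\sin(2\theta)/\sin\theta=2\cos\theta$ and $\{U_m\}_{m\ge 0}$ is orthonormal with respect to the inner product of Section~\ref{section:explicitformula}, the Fourier coefficients of $V$ are $V_1=-1$ and $V_m=0$ for all $m\neq 1$. In particular $\langle V,U_0\rangle=0$ and the decay hypothesis $V_m\ll m^{-3-\eta}$ holds trivially, so Proposition~\ref{thm:GeneralSV} applies. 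Comparing the definition of $T_V$ with~\eqref{eq:defofT(X)} shows that $T_E(X)=T_V(X)$ for this choice.

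Next I would specialize the two main terms. Since only the $m=1$ summand contributes, $Q_V(X)$ collapses to
\[
Q_V(X)=\Bigl(c_{\pm}(X)-\tfrac{\sqrt q}{\sqrt q-1}M_1(1)\Bigr)\cdot(-1)=\tfrac{\sqrt q}{\sqrt q-1}M_1(1)-c_{\pm}(X).
\]
By definition~\eqref{MmDefinition}, $M_1(1)$ is the multiplicity of $T=q^{-1}$ as a zero of $L(E/K,T)$, which is precisely $\rank(E/K)$; hence $Q_V(X)=Q_E(X)$.

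For the oscillatory part, the same collapse gives
\[
R_V(X)=\sum_{\substack{j=1,\dots,N_{E/K}\\ \theta_j\neq 0}}\frac{\gamma_j}{\gamma_j-\sqrt q}\ee^{\ii\theta_j X}.
\]
The equivalent expression in the statement follows from the elementary identity
\[
\frac{\gamma_j}{\gamma_j-\sqrt q}=\frac{1}{1-\sqrt q/\gamma_j}=\frac{1}{1-q^{-1/2}\ee^{-\ii\theta_j}},
\]
where we used $\gamma_j=q\ee^{\ii\theta_j}$ from~\eqref{CaseMEqual1}. Assembling $Q_V$ and $R_V$ together with the $o_{X\to\infty}(1)$ term supplied by Proposition~\ref{thm:GeneralSV} yields~\eqref{LimitDistForT}. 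No substantial obstacle remains: the corollary is essentially a bookkeeping exercise once Proposition~\ref{thm:GeneralSV} has been established, the only conceptual point being the identification $M_1(1)=\rank(E/K)$, which is built into the analytic definition of the rank.
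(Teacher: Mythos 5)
Your proof is correct and follows exactly the same route as the paper's (which simply says the corollary is immediate from Proposition~\ref{thm:GeneralSV} because $V_1=-1$ and $V_m=0$ for $m\ge 2$); you have merely spelled out the bookkeeping—verifying the hypotheses of the proposition, collapsing the sums to the $m=1$ term, identifying $M_1(1)=\rank(E/K)$, and rewriting $\gamma_j/(\gamma_j-\sqrt q)$—in full detail.
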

\begin{proof}
This is obvious from Proposition \ref{thm:GeneralSV} because $V_1 = -1$ and $V_m = 0$ for all $m\ge 2$. 
\end{proof}

%

Corollary \ref{thm:LimitingDistribution} applied to the case $V= -U_1$ enables us to study the random variable
$X_E$ associated to the limiting distribution of $T_E(X)$. Theorem~\ref{FirstTheorem}(i) shows that we can obtain simple closed formul\ae\, for the mean and variance of $X_E$. 


\begin{proof}[Proof of Theorem~\ref{FirstTheorem}(i)]
First, we compute \[
\mathbb{E}[X_E] =
\lim_{M\to\infty}
\frac1M
\sum_{X=1}^M T_E(X).
\]
From Corollary \ref{thm:LimitDistForT}, we have
\begin{equation}\label{eq:SE1}
T_E(X) = 
\frac{\sqrt q}{\sqrt q -1}\mathrm{rank}(E/K) 
-c_{\pm}(X)
+
\sum_{\theta_j\neq0} \frac{1}{1-q^{-1/2} \mathrm{e}^{-\mathrm{i}\theta_j}} \mathrm{e}^{\mathrm{i} \theta_j X}+o_{X\rightarrow \infty}(1).
\end{equation}
It is easy to show that
\[
\lim_{M\to\infty}
\frac1M
\sum_{X=1}^M 
\left(
\frac{\sqrt q}{\sqrt q -1}\mathrm{rank}(E/K)
-c_{\pm}(X)
\right)
=
\frac{\sqrt q}{\sqrt q-1}
\left(
\rank(E/K) - \frac12
\right).
\]
In addition, for any $\theta\in (0,2\pi)$,
\begin{equation}\label{eq:EquiDistributionCircle}
\sum_{X=1}^M \mathrm{e}^{\mathrm{i}\theta X} = O\left( \frac 1{\| \theta/2\pi \| } \right),
\end{equation}
where $\lVert \cdot\rVert$ denotes the distance to the nearest integer. The formula for $\mathbb{E}[X_E]$ follows immediately from \eqref{eq:SE1}. 
For $\mathbb{V}[X_E]$, we must compute
\[
\mathbb{V}[X_E] = \lim_{M\to\infty}\frac1M
\sum_{X=1}^M
(T_E(X) - \mathbb{E}[X_E])^2.
\]
Note that
$
{\sqrt q}/{(2(\sqrt q - 1))} - c_{\pm}(X) =
\pm{\sqrt q}/{(2(\sqrt q + 1))}
$,
with a $+$ (resp. $-$) sign if $X$ is odd (resp. even). Thus
\begin{multline}\label{eq:SE2}
\left(
\frac12\frac{\sqrt q}{\sqrt q - 1} - c_{\pm}(X) +
\sum_{\theta_j\neq0} \frac{1}{1-q^{-1/2} \mathrm{e}^{-\mathrm{i}\theta_j}} \mathrm{e}^{\mathrm{i} \theta_j X}
\right)^2
= \\
\frac14
\left(
\frac{\sqrt q}{\sqrt q+1}
\right)^2
\pm\frac{\sqrt q}{\sqrt q + 1}
\sum_{\theta_j\neq0} \frac{1}{1-q^{-1/2} \mathrm{e}^{-\mathrm{i}\theta_j}} \mathrm{e}^{\mathrm{i} \theta_j X}
+
\left(
\sum_{\theta_j\neq0} \frac{1}{1-q^{-1/2} \mathrm{e}^{-\mathrm{i}\theta_j}} \mathrm{e}^{\mathrm{i} \theta_j X}
\right)^2.
\end{multline}
Using this and \eqref{eq:SE1}, we obtain
\begin{align*}
\mathbb{V}[X_E] &= 
\lim_{M\to\infty}\frac1M
\sum_{X=1}^M
\left(
\frac12\frac{\sqrt q}{\sqrt q - 1} - c_{\pm}(X) +
\sum_{\theta_j\neq0} \frac{1}{1-q^{-1/2} \mathrm{e}^{-\mathrm{i}\theta_j}} \mathrm{e}^{\mathrm{i} \theta_j X}
\right)^2
 \\
&=
\frac14
\left(
\frac{\sqrt q}{\sqrt q+1}
\right)^2
+
\lim_{M\to\infty}\frac1M
\sum_{X=1}^M
\left(
\sum_{\theta_j\neq0} \frac{1}{1-q^{-1/2} \mathrm{e}^{-\mathrm{i}\theta_j}} \mathrm{e}^{\mathrm{i} \theta_j X}
\right)^2,
\end{align*}
where the last line follows from \eqref{eq:SE2} and \eqref{eq:EquiDistributionCircle}. Next we compute
\begin{align*}
\sum_{X=1}^M
\left(
\sum_{\theta_j\neq0} \frac{1}{1-q^{-1/2} \mathrm{e}^{-\mathrm{i}\theta_j}} \mathrm{e}^{\mathrm{i} \theta_j X}
\right)^2
&=
\sum_{X=1}^M
\left(
\sums_{\theta_j\neq0} \frac{m(\theta_j)\mathrm{e}^{\mathrm{i} \theta_j X}}{1-q^{-1/2} \mathrm{e}^{-\mathrm{i}\theta_j}} 
\right)^2 \\
&=
\sum_{X=1}^M
\left(
\sums_{\theta_{k}\neq0} \frac{m(\theta_{k})\mathrm{e}^{\mathrm{i} \theta_{k} X}}{1-q^{-1/2} \mathrm{e}^{-\mathrm{i}\theta_{k}}} 
\right) 
\left(
\sums_{\theta_{l}\neq0} \frac{m(\theta_{l})\mathrm{e}^{-\mathrm{i} \theta_{l} X}}{1-q^{-1/2} \mathrm{e}^{\mathrm{i}\theta_{l}}} 
\right)\,. 
\end{align*}
Splitting out the diagonal term, the right hand side equals
$$
M
\sums_{\theta_j\neq 0}\frac{m(\theta_j)^2}{|1-q^{-1/2}\mathrm{e}^{-\mathrm{i}\theta_j}|^2}
+
\sums_{\substack{\theta_k, \theta_l\neq 0 \\ k\neq l}}
\frac{m(\theta_k)m(\theta_l)}{(1-q^{-1/2}\mathrm{e}^{-\mathrm{i}\theta_k})(1-q^{-1/2}\mathrm{e}^{\mathrm{i}\theta_l})} \sum_{X=1}^M\mathrm{e}^{\mathrm{i}(\theta_k-\theta_l)X}\,.
$$
We divide the last line by $M$, let $M\to\infty$, and use \eqref{eq:EquiDistributionCircle} one more time to obtain
\[
\mathbb{V}[X_E] = 
\frac14
\left( \frac{\sqrt q}{\sqrt q+1} \right)^2
+
\sums_{\theta_j\neq 0}\frac{m(\theta_j)^2}{|1-q^{-1/2}\mathrm{e}^{-\mathrm{i}\theta_j}|^2}.
\]
This concludes the proof.
\end{proof}



%

\section{Ulmer's family}\label{sec:Ulmer}
The goal of this section is to prove the results stated in~\S\ref{section:mainresults} regarding the function $T_E(X)$ associated to the elliptic curves of Ulmer's family in \cite{Ulm02}. 
Let $\mathbb{F}_q(t)$ be the rational function field over $\mathbb{F}_q$. Following \cite{Ulm02}, we define $E_d/\mathbb{F}_q(t)$ to be the elliptic curve over $\mathbb F_q(t)$ given by the Weierstrass equation
\[ E_d: y^2+xy = x^3 - t^d,\]
and we write $T_d(X)$ for the function $T_E(X)$ that arises from $E_d/\mathbb{F}_q(t)$. Essential to us is the following explicit description of their Hasse-Weil $L$-function.
\begin{proposition}\label{thm:UlmerLfunction}
Suppose that $d$ divides $p^n + 1$ for some $n$, and let $L(E_d/\mathbb{F}_q(t),T)$ be the Hasse-Weil $L$-function of $E_d$ over $\mathbb{F}_q(t)$. Then,
\[
L(E_d/\mathbb{F}_q(t), T) =
(1-qT)^{\epsilon_d} \prod_{\substack{e|d \\ e\nmid 6}}
\left(1-(qT)^{o_e(q)}\right)^{\phi(e)/o_e(q)}.
\]
Here, $\phi(e)=\#(\mathbb{Z}/e\mathbb{Z})^*$ is the Euler-phi function and $o_e(q)$ is the (multiplicative) order of $q$ in $(\mathbb{Z}/e\mathbb{Z})^*$. Further, $\epsilon_d$ is defined as
\[
\epsilon_d :=
\begin{cases}
0 & \text{ if $2\nmid d$ or $4 \nmid q-1$} \\
1 & \text{ if $2|d$ and $4 | q-1$}
\end{cases}
+
\begin{cases}
0 & \text{ if $3\nmid d$}  \\ 
1 & \text{ if $3|d$ and $3\nmid q-1$}  \\ 
2 & \text{ if $3|d$ and $3|q-1$}
\end{cases}\,.
\]
In particular we have an explicit formula for the analytic rank of $E_d/\mathbb{F}_q(t)$:
$$
{\rm rank}(E/\mathbb{F}_q(t))= \epsilon_d + \sum_{\substack{ e\mid d \\ e\nmid 6}} \frac{\phi(e)}{o_e(q)}\,.
$$
\end{proposition}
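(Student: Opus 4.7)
The plan is to reduce the proposition to Ulmer's explicit computation of $L(E_d/\mathbb{F}_q(t),T)$ in \cite{Ulm02} and then reorganize his answer into the stated product form. The starting point is the observation (due to Shioda and exploited by Ulmer) that the Kodaira--N\'eron model $\mathcal{E}_d\to\mathbb{P}^1$ is dominated by a Fermat-type surface over $\overline{\mathbb{F}_q}$: an explicit rational map realizes $\mathcal{E}_d$ as a quotient of $F_d\colon u^d+v^d=\text{(inhomogeneous cubic)}$ by an action of $\mu_d\times\mu_d$. Pulling back, the interesting part of $H^2_{\text{\'et}}(\mathcal{E}_d,\mathbb{Q}_\ell)$ is a sub-Hodge structure of the corresponding part of $H^2_{\text{\'et}}(F_d,\mathbb{Q}_\ell)$, which decomposes under the $\mu_d\times\mu_d$-action into one-dimensional eigenspaces indexed by pairs of characters $(\chi^a,\chi^b)$. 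The crucial input, going back to Weil, is that geometric Frobenius acts on the eigenspace attached to $(\chi^a,\chi^b)$ by multiplication by a Jacobi sum $J(\chi^a,\chi^b)$, and the assumption $d\mid p^n+1$ guarantees that every such sum has absolute value exactly $q$, i.e.~contributes a factor of the form $1-qT$ after passing from an extension of $\mathbb{F}_q$ back down.

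The next step is the grouping by orbits of Frobenius. The Galois group $\Gal(\overline{\mathbb{F}_q}/\mathbb{F}_q)$ acts on the set of pairs $(\chi^a,\chi^b)$ of characters of exact order $e$ through multiplication by $q$, with orbits of length $o_e(q)$. Characters in a single orbit give Frobenius eigenvalues that are permuted transitively, so their combined contribution to the $L$-function is a single factor $\bigl(1-(qT)^{o_e(q)}\bigr)$. Counting the number of relevant orbits of order $e$ gives $\phi(e)/o_e(q)$, producing the product $\prod_{e\mid d,\, e\nmid 6}\bigl(1-(qT)^{o_e(q)}\bigr)^{\phi(e)/o_e(q)}$. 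One then combines this with the Euler characteristic formula \eqref{eq:NuM} for $\nu_1=N_{E_d/\mathbb{F}_q(t)}$ and the explicit conductor of $E_d$ computed by Ulmer (via Tate's algorithm applied at $t=0$ and $t=\infty$) to check that the degrees match and no characters are missed.

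The third step handles the small-order cases $e\in\{1,2,3,6\}$ separately. For these $e$, the corresponding Jacobi sums degenerate and become $\pm q$ exactly (rather than some algebraic integer of absolute value $q$), so they contribute rational factors $(1-qT)$ to the $L$-function instead of a genuine cyclotomic-style factor. Whether such a factor actually appears depends on whether the corresponding character is $\mathbb{F}_q$-rational, which in turn depends on whether $q\equiv 1\pmod 4$ (for the order-$2$ or order-$4$ contribution tied to divisibility of $d$ by $2$) and on whether $q\equiv 1\pmod 3$ (for the order-$3$ or order-$6$ contribution tied to divisibility of $d$ by $3$). Bookkeeping these four subcases yields the piecewise expression for $\epsilon_d$. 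The formula for $\rank(E_d/\mathbb{F}_q(t))$ is then immediate: the analytic rank is the order of vanishing of $L(E_d/\mathbb{F}_q(t),T)$ at $T=q^{-1}$, and each displayed factor contributes its exponent to that vanishing.

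The main obstacle is the careful bookkeeping for $\epsilon_d$: one must be sure that exactly the right number of trivial $(1-qT)$ factors survives after accounting for (i) the Shioda--Tate subtraction of classes coming from components of reducible fibers and sections of $\mathcal{E}_d\to\mathbb{P}^1$, and (ii) the descent of $\mathbb{F}_q$-rationality of the small-order characters, which is controlled by the congruence of $q$ modulo $4$ and $3$. Once this delicate case analysis is done, the rest of the proof is formal and the product formula drops out.
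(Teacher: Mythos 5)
Your plan and the paper's proof both rest on Ulmer's Fermat--surface computation; the paper simply cites \cite[Cor.~7.7, Prop.~8.1]{Ulm02} without reproving them, so you are unpacking the cited results rather than proposing an alternative route. The unpacking has one misstated step that would fail if taken literally. You claim that the hypothesis $d\mid p^n+1$ ``guarantees that every such [Jacobi] sum has absolute value exactly $q$, i.e.\ contributes a factor of the form $1-qT$.'' Having absolute value $q$ is the Weil bound and holds for all the relevant Jacobi sums unconditionally, with no hypothesis on $d$ and $p$; it does not imply rationality of the Frobenius eigenvalue and does not, by itself, yield any $1-qT$ factor. What $d\mid p^n+1$ actually supplies is supersingularity: $p^n\equiv -1\pmod d$ forces $\chi\mapsto\chi^{p^n}=\bar\chi$ for every character of order dividing $d$, and combining this with $J(\chi^a,\chi^b)\cdot\overline{J(\chi^a,\chi^b)}=q$ and the Galois action on Jacobi sums shows that each relevant Jacobi sum is a root of unity times $q$. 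That rationality statement, not the Weil bound, is what makes the Frobenius eigenvalues become exact powers of $q$ after passing to the degree-$o_e(q)$ extension and hence produces the factors $\bigl(1-(qT)^{o_e(q)}\bigr)$ with integer exponents $\phi(e)/o_e(q)$. The remaining steps you describe---orbit-counting under multiplication by $q$, separate bookkeeping for $e\mid 6$ giving $\epsilon_d$, and reading off the analytic rank as the order of vanishing of $L$ at $T=q^{-1}$---agree with the cited material and are fine.
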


\begin{proof}
This is essentially a corollary of the main results from \cite{Ulm02}. To briefly highlight the main ingredients,~\cite[Cor.~7.7 and Prop.~8.1]{Ulm02} computes the characteristic polynomial, under the action of Frobenius, of the (degree 2 component of the) \'etale cohomohology group for (a certain quotient of) the Fermat surface of degree $d$, whose image under blow-ups provides a smooth proper model for $E_d$ over $\mathbb{F}_q(t)$. Under the assumption that $d$ divides $p^n+1$ for some $n$, this characteristic polynomial is given precisely by the expression in the statement of the present proposition. 

The statement about the rank is then a straightforward consequence of the formula for the $L$-function 
of $E_d/\mathbb{F}_q(t)$.
\end{proof}



\begin{proposition}\label{prop:explicitformulaS_d}
Let $c_{\pm}(X)$ be defined as in \eqref{eq:cplusminus}. Then
\[
T_d(X) =-c_{\pm}(X) +\frac{\epsilon_d}{1-q^{-\frac 12}}+\sum_{\substack{e \mid d\\ e\nmid6}} \phi(e) \frac{ q^{ -(X \bmod o_{e}(q) ) /2 }}{ 1- q^{-o_{e}(q)/2}} +o_{X\rightarrow \infty}(1) 
\]
for $X$ large enough, where $0\leq (X \bmod \ell) \leq \ell-1$ is the remainder in the Euclidean division of
 $X$ by $\ell$. 
\end{proposition}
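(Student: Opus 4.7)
The plan is to apply Corollary~\ref{thm:LimitDistForT} directly, using the completely explicit factorization of $L(E_d/\mathbb{F}_q(t),T)$ provided by Proposition~\ref{thm:UlmerLfunction}, and then evaluate the resulting finite Fourier-type sum over roots of unity in closed form.

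First I would read off the inverse zeros $\gamma_j = q\mathrm{e}^{\mathrm{i}\theta_j}$ from the factorization. The factor $(1-qT)^{\epsilon_d}$ contributes the angle $\theta=0$ with multiplicity $\epsilon_d$, and for each $e\mid d$ with $e\nmid 6$, the factor $(1-(qT)^{o_e(q)})^{\phi(e)/o_e(q)}$ splits as $\prod_{k=0}^{o_e(q)-1}(1-q\zeta_e^k T)^{\phi(e)/o_e(q)}$ with $\zeta_e := \mathrm{e}^{2\pi\mathrm{i}/o_e(q)}$, so it contributes the angles $\theta = 2\pi k/o_e(q)$ for $k=0,\ldots,o_e(q)-1$, each with multiplicity $\phi(e)/o_e(q)$. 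In particular the multiplicity of $\theta=0$ equals $\epsilon_d + \sum_{e\mid d,\, e\nmid 6}\phi(e)/o_e(q) = \mathrm{rank}(E_d/\mathbb{F}_q(t))$, which simply reproduces the rank formula of Proposition~\ref{thm:UlmerLfunction} and is exactly what $Q_E(X)$ already accounts for.

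Next I would plug this information into $R_E(X)$ and evaluate, for each $e\mid d$ with $e\nmid 6$ and $\ell := o_e(q)$, the inner sum
\[
\Sigma_\ell(X) := \sum_{k=1}^{\ell-1} \frac{\mathrm{e}^{2\pi\mathrm{i} k X/\ell}}{1 - q^{-1/2}\mathrm{e}^{-2\pi\mathrm{i} k/\ell}}.
\]
The key computational step is to expand $(1-q^{-1/2}\mathrm{e}^{-2\pi\mathrm{i} k/\ell})^{-1} = \sum_{m\geq 0} q^{-m/2}\mathrm{e}^{-2\pi\mathrm{i} m k/\ell}$, add the $k=0$ term, swap the order of summation, and apply orthogonality of the $\ell$-th roots of unity: $\sum_{k=0}^{\ell-1}\mathrm{e}^{2\pi\mathrm{i} k(X-m)/\ell} = \ell\cdot\mathbb{1}[m\equiv X\pmod{\ell}]$. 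Summing the resulting geometric series in $m$ yields
\[
\Sigma_\ell(X) + \frac{1}{1-q^{-1/2}} = \frac{\ell\,q^{-(X\bmod \ell)/2}}{1-q^{-\ell/2}},
\]
which gives $\Sigma_\ell(X)$ in closed form.

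Finally I would assemble everything: multiplying by $\phi(e)/\ell$ and summing over $e\mid d,\, e\nmid 6$, the terms $-\phi(e)/(o_e(q)(1-q^{-1/2}))$ coming from removing the $k=0$ contribution cancel exactly against the non-$\epsilon_d$ part of $\frac{\sqrt q}{\sqrt q -1}\mathrm{rank}(E_d/\mathbb{F}_q(t)) = \frac{1}{1-q^{-1/2}}\mathrm{rank}(E_d/\mathbb{F}_q(t))$ in $Q_E(X)$, leaving exactly the claimed formula. I do not anticipate any serious obstacle: the whole argument is a bookkeeping of zeros plus the roots-of-unity identity, and the main place to be careful is matching the $k=0$ multiplicities with the rank term so that the $\frac{1}{1-q^{-1/2}}$ contributions combine cleanly into $\epsilon_d/(1-q^{-1/2})$.
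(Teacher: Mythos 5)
Your proposal is correct and follows essentially the same route as the paper: combine Corollary~\ref{thm:LimitDistForT} with the explicit factorization from Proposition~\ref{thm:UlmerLfunction}, then evaluate the resulting roots-of-unity sums by expanding $(1-q^{-1/2}\ee^{-2\pi\ii k/\ell})^{-1}$ as a geometric series and applying orthogonality. The only cosmetic difference is that the paper computes the full sum $f_\ell(X)=\sum_{k=0}^{\ell-1}$ directly (having already absorbed the $k=0$ terms into the rank part of $Q_E$), whereas you compute $\Sigma_\ell(X)=\sum_{k=1}^{\ell-1}$ and then perform the cancellation against $Q_E(X)$ explicitly at the end; both bookkeeping choices lead to the same closed form.
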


\begin{proof}
We combine Corollary \ref{thm:LimitDistForT} and Proposition \ref{thm:UlmerLfunction} to obtain
\begin{equation}\label{eq:SdAgain}
T_d(X) =
-c_{\pm}(X)+\frac{\epsilon_d}{1-q^{-\frac 12}}+\sum_{\substack{e \mid d \\ e\nmid 6}} \frac{\phi(e)}{o_{e}(q)}  \sum_{ k=0}^{o_{e}(q)-1} 
\frac{\ee^{2\pi\mathrm{i} k X /o_e(q)}}{1-q^{-1/2} \mathrm{e}^{-2\pi\mathrm{i}k / o_e(q)}}+o_{X\rightarrow\infty}(1).
\end{equation}
To simplify this, define
\[
f_{\ell}(X):= \sum_{ k=0}^{\ell-1} 
\frac{\ee^{2\pi \mathrm{i} k X /\ell}}{1-q^{-1/2} \mathrm{e}^{-2\pi i k / \ell }}
\]
for any positive integer $\ell$. Then, 
\begin{align*}
f_{\ell}(X) &=\sum_{ k=0}^{\ell-1} \ee^{2\pi \mathrm{i} k X /\ell} \sum_{m=0}^{\infty} q^{-m/2} \mathrm{e}^{-2\pi\mathrm{i} km / \ell }   = \sum_{m=0}^{\infty} q^{-m/2}\sum_{ k=0}^{\ell-1}   \ee^{2\pi i k (X - m) /\ell} \\
&=\sum_{m=0}^{\infty} q^{-m/2} \begin{cases}  \ell & \text{ if } X \equiv m \bmod \ell \\
0 & \text{ otherwise}
\end{cases} = \ell \sum_{j=0}^{\infty} q^{- ((X \bmod \ell)+j \ell) /2} = \frac{\ell q^{ -(X \bmod \ell ) /2 }}{ 1- q^{-\ell/2}}.
\end{align*}   
Now, we use this with $\ell=o_e(q)$ in \eqref{eq:SdAgain} and finish the proof.
\end{proof}

As a first consequence we deduce the existence and a nontrivial lower bound for $\delta(E_d)$ which is valid in general. We also get
 a conditional upper bound that will be used to prove Proposition~\ref{prop:notsobiased}.
It will be useful to consider the periodic part of $T_d(X)$, so we define
\begin{equation}
T_d^{\rm per}(X) :=-c_{\pm}(X) +\frac{\epsilon_d}{1-q^{-\frac 12}}+\sum_{\substack{e \mid d\\ e\nmid6}} \phi(e) \frac{ q^{ -(X \bmod o_{e}(q) ) /2 }}{ 1- q^{-o_{e}(q)/2}}.
\label{eq:T_dwithouterror} \end{equation}
(Note that $T_d^{\rm per}(X)=T_d(X)+o_{X\rightarrow \infty }(1)$.)

\begin{corollary} \label{corollary lower bound for delta}
The density $\delta(E_d)$ exists; more precisely assuming that $d\geq 7$ divides $p^n+1$ for some $n$, the function 
$T_d^{\rm per}(X)$ is $2n$-periodic. 
Furthermore, one has
$$ \delta(E_d) \geq \frac 1{2n}\,,$$
and, provided there exists some $X_0\geq 2$ such that $T_d^{\rm per}(X_0)<0$, one also has
$$
\delta(E_d) \leq 1-\frac 1{2n}\,.
$$
\end{corollary}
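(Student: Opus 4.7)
My plan has three parts. First, to establish $2n$-periodicity: the hypothesis $d \mid p^n+1$ gives $p^{2n} \equiv 1 \pmod d$, so raising to the $k$-th power (with $q = p^k$) yields $q^{2n} \equiv 1 \pmod e$ for every divisor $e$ of $d$. Hence $o_e(q) \mid 2n$. Each summand in \eqref{eq:T_dwithouterror} indexed by $e$ depends on $X$ only through $X \bmod o_e(q)$ and is therefore $2n$-periodic; $c_{\pm}(X)$ is $2$-periodic and hence also $2n$-periodic. Consequently $T_d^{\rm per}$ is $2n$-periodic. Since $T_d(X) = T_d^{\rm per}(X) + o(1)$ by Proposition~\ref{prop:explicitformulaS_d}, on each residue class $X_0 + 2n\mathbb{Z}$ with $T_d^{\rm per}(X_0) \neq 0$ the sign of $T_d(X)$ stabilizes for $X$ large; this yields the existence of $\delta(E_d)$ together with the formula
\[
\delta(E_d) = \frac{1}{2n}\#\{X_0 \in \{1, \ldots, 2n\} : T_d^{\rm per}(X_0) > 0\}.
\]

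For the lower bound $\delta(E_d) \geq 1/(2n)$, I would average $T_d^{\rm per}$ over one period directly from \eqref{eq:T_dwithouterror}. Using that $X$ sweeps out each residue class modulo $o_e(q)$ equally often, a short calculation yields
\[
\frac{1}{2n}\sum_{X=1}^{2n} T_d^{\rm per}(X) = \frac{\sqrt q}{\sqrt q - 1}\Bigl(\rank(E_d/K) - \tfrac{1}{2}\Bigr),
\]
matching the mean $\mathbb{E}[X_{E_d}]$ from Theorem~\ref{FirstTheorem}(i). For $d \geq 7$, the index $e = d$ appears in the rank formula of Proposition~\ref{thm:UlmerLfunction} (because $d \nmid 6$), contributing $\phi(d)/o_d(q) \geq 1$ since $o_d(q) \mid \phi(d)$. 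Hence $\rank(E_d/K) \geq 1$, the period-average is strictly positive, and at least one $X_0 \in \{1, \ldots, 2n\}$ achieves $T_d^{\rm per}(X_0) > 0$; the stabilization of signs then gives the claimed lower bound.

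For the upper bound, under the assumption that some $X_0 \geq 2$ has $T_d^{\rm per}(X_0) < 0$, the same stabilization argument applied to $X_0 + 2n\mathbb{Z}_{\geq 0}$ forces $T_d(X) < 0$ on a progression of density $1/(2n)$, yielding $\delta(E_d) \leq 1 - 1/(2n)$. The main technical delicacy I foresee is handling residue classes $X_0$ with $T_d^{\rm per}(X_0) = 0$: there the sign of $T_d(X_0+2nk)$ depends on the $o(1)$ error from Proposition~\ref{prop:explicitformulaS_d} and is a priori ambiguous, threatening both the existence of $\delta(E_d)$ and the sharpness of the stated counts. I would resolve this either by extracting a quantitative rate in Lemma~\ref{EstimateOfDN} sharp enough to pin down the signs on these residue classes, or by an algebraic argument that the specific linear combination in \eqref{eq:T_dwithouterror} cannot vanish at any integer $X$; this is where I expect the real work to lie.
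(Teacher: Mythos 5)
Your proof is correct but uses a genuinely different argument for the lower bound than the paper does. The paper evaluates $T_d(X)$ directly at $X\equiv 0\bmod 2n$, where every summand in Proposition~\ref{prop:explicitformulaS_d} contributes its maximum $\phi(e)/(1-q^{-o_e(q)/2})$, and positivity follows at once because $\phi(d)$ is even and hence $\geq 2$ for $d\geq 3$, so $2/(1-q^{-o_d(q)/2})>c_{\pm}(X)$. You instead average $T_d^{\rm per}$ over a full period, recover the mean $\frac{\sqrt q}{\sqrt q - 1}\bigl(\rank(E_d)-\tfrac12\bigr)$ of Theorem~\ref{FirstTheorem}(i) (your computation here checks out: the $e$-sum averages to $\frac{\phi(e)}{o_e(q)}\cdot\frac{\sqrt q}{\sqrt q-1}$ because $o_e(q)\mid 2n$), and deduce that some $X_0$ has $T_d^{\rm per}(X_0)>0$ since $\rank(E_d)\geq \phi(d)/o_d(q)\geq 1$ by Lagrange. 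Both routes are valid; the paper's is more direct and pins down a specific residue class, while yours is a cleaner conceptual link between the periodic structure and the mean of the limiting distribution. The periodicity argument and the upper bound are handled the same way in both. Your concern about residue classes with $T_d^{\rm per}(X_0)=0$ is a real subtlety that the paper does not address either: it asserts existence of $\delta(E_d)$ is clear because $T_d$ is periodic up to $o(1)$, but that is only immediate when $T_d^{\rm per}$ never vanishes on $\{1,\ldots,2n\}$. Note, however, that the two inequalities $\underline{\delta}(E_d)\geq 1/(2n)$ and $\overline{\delta}(E_d)\leq 1-1/(2n)$ hold unconditionally under both approaches, since each is established on a residue class where the sign does stabilize; it is only the clean counting formula $\delta(E_d) = \frac{1}{2n}\#\{X_0 : T_d^{\rm per}(X_0) > 0\}$ and the existence claim that depend on nonvanishing.
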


\begin{proof}
The existence of $\delta(E_d)$ is clear since $T_d(X)$ is periodic up to a function of size $o_{X\rightarrow\infty}(1)$.

Let us prove the second part of the statement. Let $k\geq 1$ be the integer such that $q=p^k$. Note first that the expression 
$$\sum_{\substack{e \mid d\\ e\nmid6}} \phi(e) \frac{ q^{ -(X \bmod o_{e}(q) ) /2 }}{ 1- q^{-o_{e}(q)/2}} $$
is $2n$-periodic. This follows since $p^{2n} \equiv 1 \bmod d$, which implies that $q^{2n} = p^{2kn} \equiv 1 \bmod e$ for every $e\mid d$, and hence $o_e(q) \mid 2n$.

If $X\equiv 0 \bmod 2n$, then Proposition \ref{prop:explicitformulaS_d} implies that

\begin{align*}
T_d(X) &=-c_{\pm}(X) +\frac{\epsilon_d}{1-q^{-\frac 12}}+\sum_{\substack{e \mid d\\ e\nmid6}} \phi(e) \frac{ 1 }{ 1- q^{-o_{e}(q)/2}} +o_{X\rightarrow \infty}(1), 
\end{align*}  
a quantity which is positive for $X$ large enough, since $\phi(d)$ is even for $d\geq 3$ and thus
$$\frac{ 2 }{ 1- q^{-o_{e}(q)/2}}  > c_{\pm}(X).  $$
Finally the upper bound is a trivial consequence of the $2n$-periodicity of $T_d^{\rm per}(X)$ and of the existence of some $X_0$ such that $T_d^{\rm per}(X_0)<0$, combined with the fact that $T_d(X)=T_d^{\rm per}(X)+o_{X\rightarrow\infty}(1)$.
\end{proof}

%
%
%

\subsection{Cases of extreme bias for Ulmer's family: proof of Theorem~\ref{th:extremebias}.}

\begin{proof}[Proof of Theorem \ref{th:extremebias}(i)]
If $d$ and $q$ satisfy one of the two stated assumptions, then $\epsilon_d \ge 1$. Then, the statement easily follows because
\begin{align*}
-c_{\pm}(X)+\frac{\epsilon_d}{1-q^{-1/2}} &\ge -\frac 1{1-q^{-1}} + \frac 1{1-q^{-\frac 12}} = \frac{q^{-\frac 12}-q^{-1}}{(1-q^{-1})(1-q^{-\frac 12})}>0,
\end{align*}
thus using Proposition~\ref{prop:explicitformulaS_d} we see that 
$T_d(X) > 0$ for all large enough $X$.
\end{proof}


\begin{proof}[Proof of Theorem \ref{th:extremebias}(ii)]
By Proposition \ref{prop:explicitformulaS_d} and by positivity, we have (recall that $p$ is large enough, and therefore so is $d$)
\begin{align*}
T_d(X) &= -c_{\pm}(X) +\frac{\epsilon_d}{1-q^{-\frac 12}}+\sum_{\substack{e \mid d\\ e\nmid6}} \phi(e) \frac{ q^{ -(X \bmod o_{e}(q) ) /2 }}{ 1- q^{-o_{e}(q)/2}} +o_{X\rightarrow \infty}(1) \\ &\geq \phi(d) q^{ -(X \bmod o_{d}(q) ) /2} -1-o_{p\rightarrow \infty}(1) -o_{X\rightarrow \infty}(1)  \\
& \geq \phi(d) q^{ -(o_{d}(q)-1 ) /2} -1-o_{p\rightarrow \infty}(1) -o_{X\rightarrow \infty}(1).
\end{align*}  
However, we have that $q^{2n/k} \equiv 1 \bmod d$, that is $o_d(q) \mid 2n/k$. We conclude that
\begin{align*}
T_d(X) &\geq  \phi(d) q^{\frac 12}  q^{-\frac nk} -1-o_{p\rightarrow \infty}(1) -o_{X\rightarrow \infty}(1)  \\
& =\phi(d) q^{\frac 12} (d-1)^{-1} -1-o_{p\rightarrow \infty}(1)-o_{X\rightarrow \infty}(1).
\end{align*}
This quantity is positive for large enough $X$, since $d$ is large enough, $\phi(d) /(d-1) \geq  (\ee^{-\gamma}+o(1)) / \log\log d $ and the condition on $n$ implies that $\log\log (p^n+1) \leq q^{\frac 12} /2 + \log\log p+1 $. 

\end{proof}



Our aim is now to prove Theorem~\ref{th:extremebias}(iii). As a preliminary result we give examples of very biased races, for which the lower bound of Corollary \ref{corollary lower bound for delta} is essentially sharp. We will then show using a result of Goldfeld that there exist arbitrarily large values of $p$ and $d$ satisfying the conditions of the statement (i) of the proposition.

\begin{proposition}\label{prop:highlybiased}
\begin{enumerate}
\item[(i)] Let $d\geq 7$ and $p\geq 3$ be two primes such that $p$ is a primitive root modulo $d$. Selecting $q=p^{\frac{d-1}2 +1}$, we have that $T_d(X)$ is quite biased towards negative values; precisely 
$$\frac{1}{d-1}\leq \delta(E_d)\leq \frac{4}{d-1}. $$
\item[(ii)] Assume that $d=p^n+1=2\ell$ with $\ell\geq 7$ a prime, $p\equiv 3 \bmod 4$ and $n\geq 4$ 
an even number. Pick $q=p^{n-1}$. Then $T_d(X)$ is quite biased towards negative values; precisely 
$$\frac 1{2n} \leq  \delta(E_d) \leq  \frac 2n. $$
\end{enumerate}
\end{proposition}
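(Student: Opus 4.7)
The overall plan is to exploit the explicit formula in Proposition~\ref{prop:explicitformulaS_d} together with the $2n$-periodicity established in Corollary~\ref{corollary lower bound for delta}, reducing both parts of the statement to counting how many of the $2n$ residues $X\bmod 2n$ satisfy $T_d^{\rm per}(X)>0$. In both (i) and (ii) the lower bound on $\delta(E_d)$ is an immediate consequence of Corollary~\ref{corollary lower bound for delta}, so all the work lies in the upper bound.

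For part~(i), I will use that $p$ being a primitive root mod $d$ implies the order of $p$ is $d-1=2n$, so $d\mid p^n+1$. Since $d\ge 7$ is prime, $\epsilon_d=0$ and the only divisor $e\mid d$ with $e\nmid 6$ is $e=d$, so the explicit formula collapses to
\[ T_d^{\rm per}(X)=-c_\pm(X)+(d-1)\frac{q^{-r/2}}{1-q^{-o_d(q)/2}},\qquad r=X\bmod o_d(q). \]
A short computation gives $o_d(q)=2n/\gcd(2,n+1)\in\{n,2n\}$ (according to the parity of $n$); using $q=p^{n+1}\ge 3^{n+1}>2n=d-1$, the positivity condition reduces (up to the close-to-$1$ factor $1-q^{-o_d(q)/2}$) to $q^{r/2}<d-1$ for even $X$ and $q^{(r-1)/2}<d-1$ for odd $X$, which forces $r=0$ in the even case and $r\in\{0,1\}$ in the odd case. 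A case analysis on the parity of $n$ then yields at most $3$ positive residues mod $2n$, so $\delta(E_d)\le 3/(2n)\le 4/(d-1)$.

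For part~(ii), the divisors of $d=2\ell$ with $e\nmid 6$ are $\ell$ and $2\ell$; the hypotheses $p\equiv 3\bmod 4$ and $n$ even yield $q=p^{n-1}\equiv 3\bmod 4$ and $3\nmid 2\ell$, hence $\epsilon_d=0$. The key technical lemma I will establish is $o_\ell(p)=o_{2\ell}(p)=2n$: if $o_{2\ell}(p)=2m$ with $m\mid n$ and $m<n$, then $2\ell\mid p^m+1<p^n+1=2\ell$, a contradiction; the analogous statement for $\ell$ uses that $p^m+1=\ell$ would force $p^n=2p^m+1$, which admits no solution for $p\ge 3$ and $1\le m<n$. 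Combined with $\gcd(n-1,2n)=1$ (since $n-1$ is odd), this gives $o_\ell(q)=o_{2\ell}(q)=2n$, so
\[ T_d^{\rm per}(X)=-c_\pm(X)+2(\ell-1)\frac{q^{-r/2}}{1-q^{-n}},\qquad r=X\bmod 2n. \]
Substituting $2(\ell-1)=p^n-1$ and $q=p^{n-1}$, the positivity condition becomes $r<2n/(n-1)$ for even $X$ and $r<1+2n/(n-1)$ for odd $X$; for $n\ge 4$ we have $2n/(n-1)\le 8/3<3$, so the positive residues are exactly $r\in\{0,2\}$ (even) and $r\in\{1,3\}$ (odd). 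Thus $\delta(E_d)=4/(2n)=2/n$.

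The main obstacle I anticipate is turning the heuristic boundary ``$q^{r/2}\lesssim d$'' into rigorous strict inequalities at the transition value $r\approx 2n/(n-1)$, since one must track the correction factors $c_\pm(X)$ and $(1-q^{-o/2})^{-1}$ and verify that the residue count does not change at the smallest admissible parameters (e.g.\ $n=4$, $p=3$ in part~(ii), or $d=7$, $p=3$ in part~(i)). Once these checks are in place, the rest of the argument is essentially algebraic, amounting to the comparison of $d-1=2n$ (resp.\ $p^n-1$) against appropriate powers of $q=p^{n+1}$ (resp.\ $q=p^{n-1}$).
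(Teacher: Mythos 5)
Your proposal is correct and follows essentially the same strategy as the paper: reduce to the explicit formula of Proposition~\ref{prop:explicitformulaS_d}, invoke Corollary~\ref{corollary lower bound for delta} for the lower bound on $\delta(E_d)$ and the $2n$-periodicity, and count the residues mod $2n$ where $T_d^{\rm per}$ is positive. The differences are minor variations in bookkeeping. For~(i), you determine $o_d(q)$ exactly ($n$ or $2n$ according to the parity of $n$, via $\gcd(n+1,2n)$) and then locate the positive residues precisely, obtaining at most $3$ positives out of $2n=d-1$; the paper takes the slightly cruder route of excluding the four indices $j\in\{0,1,(d-1)/2,(d-1)/2+1\}$ mod $d-1$, which avoids the case split on the parity of $n$ and is enough for $\delta\le 4/(d-1)$. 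For~(ii), you derive $o_\ell(p)=o_{2\ell}(p)=2n$ from the equality $d=p^n+1$ by a direct size comparison (e.g.\ $2\ell\mid p^m+1<2\ell$, or $p^n=2p^m+1$ has no solution), then pass to $o_e(q)$ via $\gcd(n-1,2n)=1$; the paper instead argues directly with $q\equiv -p^{-1}\bmod d$ and the evenness of $n$, reaching the same conclusion. Your counting of exactly four positive residues $r\in\{0,1,2,3\}$ (giving $\delta(E_d)=2/n$) is a bit sharper than the paper's bound, which only shows $T_d<0$ for $4\le j\le 2n-1$. Your worry about the boundary constants (the factors $c_\pm$ and $(1-q^{-o/2})^{-1}$) is well-founded but resolves cleanly: at the extremal parameters, e.g.\ $d=7,p=3$ in~(i) and $n=4,p=3$ in~(ii), the dominant powers of $p$ are separated by at least one unit, so the corrections (all within a factor of $2$ of their limiting values) do not move any residue across the positivity threshold.
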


\begin{proof}
(i) We first see that the conditions of Ulmer's construction are satisfied. This is clear since $p^{\frac{d-1}2} \equiv -1 \bmod d$, hence $d\mid p^n+1$ with $n=(d-1)/2$.

Note also that since $d\geq 5$ is prime, $\epsilon_d=0$. Moreover, $o_d(q) \in \{(d-1)/2,d-1 \}$. This is clear since $q \equiv -p \bmod d$, and $p$ is a primitive root modulo $d$. Proposition \ref{prop:explicitformulaS_d} then takes the form
$$ T_d(X) = -c_{\pm}(X) + \phi(d) \frac{ q^{ -(X \bmod o_{d}(q) ) /2 }}{ 1- q^{-o_{d}(q)/2}}  +o_{X\rightarrow \infty} (1). $$


If $X \equiv j \bmod (d-1)$ with $j\notin \{ 0,1, (d-1)/2,(d-1)/2+1\}$, then
$$ T_d(X) \leq -q^{-\frac 12} +2 (d-1) q^{-2/2} +o_{X\rightarrow \infty} (1) =-q^{-\frac 12} \left(1- \frac{2(d-1)}{p^{\frac{d+1}4}} \right) +o_{X\rightarrow \infty} (1), $$
a quantity which is negative for $X$ large enough since for $d\geq 7$ and $p\geq 3$ we have 
$4(d-1)<p^{\frac{d+1}4}$. We have thus shown that $T_d(X)$ is negative for most of the values of $X\bmod (d-1)$.

Combining this with Corollary~\ref{corollary lower bound for delta} we conclude that 
$$ \frac 1{d-1} \leq \delta(E_d) \leq \frac 4{d-1}. $$

(ii) First note that the given choice of paramaters ensures that $\epsilon_d=0$. Proposition \ref{prop:explicitformulaS_d} yields the formula
\begin{align*}
T_d(X)= -c_{\pm}(X) + \phi(\ell) \frac{ q^{ -(X \bmod o_{\ell}(q) ) /2 }}{ 1- q^{-o_{\ell}(q)/2}}  + \phi(2\ell) \frac{ q^{ -(X \bmod o_{2\ell}(q) ) /2 }}{ 1- q^{-o_{2\ell}(q)/2}} +o_{X\rightarrow \infty} (1).
\end{align*}  

We have $q^n \equiv (-p^{-1})^n \equiv -1 \bmod d $. We claim that $n$ is the least positive integer such that this congruence holds. Indeed this minimality condition holds by definition for the congruence $p^n \equiv -1 \bmod d$. Now 
$q \equiv -p^{-1}\,\bmod\, d$ and $n$ is even, thus the claim follows. Since $(\mathbb Z/d\mathbb Z)^\times$ is cyclic (it is isomorphic to $(\mathbb Z/\ell\mathbb Z)^\times$), this implies 
that $o_d(q)=o_{2\ell}(q)=2n$. 
Now consider an integer $j$ such that $4 \leq j  \leq 2n-1$. We have that
$$ \phi(2\ell) \frac{ q^{ -(j \bmod o_{2\ell}(q) ) /2 }}{ 1- q^{-o_{2\ell}(q)/2}} \ll \ell q^{ -4/2 } \ll p^{n- 2(n-1)}= p^{2-n}.$$
Hence, since $-c_{\pm}(j) < 0$ and $c_{\pm}(j) \gg q^{-\frac 12}=p^{(1-n)/2}$, we have for $p$ and $X$ large enough and $n\geq 4$ that $T_d(X)<0$ as soon as $X\equiv j \bmod 2n$. Hence,
$$ 0 \leq  \delta(E_d) \leq \frac 2n.  $$
As for the lower bound, it is given once more by Corollary~\ref{corollary lower bound for delta}. 

\end{proof}

We will now see that there are infinitely many choices of primes $p$ and $d$ in (i) of the proposition (in which both can be arbitrarily large) for which $p$ is a primitive root modulo $d$. This is a corollary of the following result of Goldfeld.

\begin{theorem}[Goldfeld \cite{Go}]
\label{lemma Goldfeld}
Let $x\geq 2$ be a real number and let $Li(x):=\int_2^x{(\log t)^{-1}{\rm d} t}$.
Let $1<A\leq x$. Then for each $D\geq 1$, 
\begin{equation}
 N_a(x) := \# \{ p\leq x : a \text{ is a primitive root} \bmod p \} = c Li(x) +O_D\left( \frac x{(\log x)^D}\right), 
 \label{equation Artin}
\end{equation}
for all $a\leq A$ with at most $ c(D) A^{\frac 9{10}} (5 \log x +1)^{D+2+\frac{\log x}{\log A}} $ exceptions, where $c$ is Artin's Constant:
$$ c:= \prod_p \left( 1-\frac 1{p(p-1)} \right). $$
\end{theorem}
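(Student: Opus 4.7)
The plan is to follow Hooley's conditional treatment of Artin's conjecture, replacing Riemann Hypothesis inputs by an average over $a$ via the large sieve. For a prime $p\nmid a$, the element $a$ is a primitive root modulo $p$ if and only if $a$ is not a $q$-th power residue modulo $p$ for any prime $q\mid p-1$; by M\"obius inversion,
$$
N_a(x) = \sum_{d\geq 1} \mu(d)\,\pi_a(x,d), \qquad \pi_a(x,d) := \#\{p\leq x : a^{(p-1)/d}\equiv 1\bmod p\}.
$$
The condition $a^{(p-1)/d}\equiv 1\bmod p$ holds precisely when $p$ splits completely in the Kummer extension $K_{d,a}:=\mathbb{Q}(\zeta_d,a^{1/d})$. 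Provided $a$ is not a nontrivial perfect power (which excludes $O(A^{1/2})$ values $a\leq A$), one has $[K_{d,a}:\mathbb{Q}]=d\phi(d)$ for every squarefree $d$.

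I would split the $d$-sum into three ranges. For small $d\leq (\log x)^B$ with $B=B(D)$ large, the effective unconditional Chebotarev density theorem of Lagarias--Odlyzko applied to $K_{d,a}/\mathbb{Q}$ (whose discriminant grows polynomially in $d$) yields the main term $\mu(d)\,Li(x)/(d\phi(d))$ with an acceptable error. Summing over $d$ and completing the series produces
$$
Li(x)\sum_{d\geq 1}\frac{\mu(d)}{d\phi(d)} = Li(x)\prod_{\ell\text{ prime}}\left(1-\frac{1}{\ell(\ell-1)}\right) = c\,Li(x),
$$
with tail contribution $O(Li(x)/(\log x)^D)$. For the medium range $(\log x)^B<d\leq \sqrt{x}/(\log x)^C$, the unconditional Brun--Titchmarsh bound combined with the congruence $p\equiv 1\bmod d$ forced by the residue condition gives $\pi_a(x,d)\ll x/(d\phi(d)\log(x/d))$ uniformly in $a$, whose summation is absorbed in the error.

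The main obstacle is the large range $d>\sqrt{x}/(\log x)^C$, where individual bounds on $\pi_a(x,d)$ are too weak (it is here that Hooley's conditional argument invokes GRH). One handles this by averaging over $a$: writing the indicator of a $d$-th power residue as a normalized sum over multiplicative characters of $(\mathbb{Z}/p\mathbb{Z})^{\times}$ of order dividing $d$, the large sieve inequality for primes yields
$$
\sum_{a\leq A}\pi_a(x,d) \ll \frac{(A+x/d)\,x}{d\log x}.
$$
A dyadic decomposition over $d$ together with Markov's inequality then bounds the set of $a\leq A$ for which the large-$d$ contribution exceeds $x/(\log x)^D$ by $c(D)\,A^{9/10}(5\log x+1)^{D+2+\log x/\log A}$: the exponent $9/10$ arises from balancing $A$ against $x/d$ in the large sieve output, and the factor $\log x/\log A$ records the number of dyadic sub-intervals one must sum over. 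Combining the three ranges yields the theorem; the technical heart of the argument is the uniform large sieve input in the large-$d$ regime.
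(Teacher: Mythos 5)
The paper does not prove this theorem: it is quoted as an external result from Goldfeld's 1968 paper \cite{Go} (``Artin's conjecture on the average''), so there is no internal proof to compare your proposal against. I will therefore assess the proposal on its own terms.

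Your Hooley-style three-range decomposition with a large sieve average over $a$ in the large-$d$ range is in the right spirit, but the small-$d$ range as you have set it up does not work, and this gap is not cosmetic — it is where the whole difficulty lives. You propose to obtain, for each \emph{individual} $a$ and each squarefree $d\leq(\log x)^B$, the asymptotic $\pi_a(x,d)=\mathrm{Li}(x)/(d\phi(d))+$ (negligible error) from the unconditional Lagarias--Odlyzko effective Chebotarev theorem applied to $K_{d,a}=\mathbb{Q}(\zeta_d,a^{1/d})$. This cannot succeed. The unconditional error term has the shape $\mathrm{Li}(x^{\beta_0})+x\exp\bigl(-c(\log x/n_{d,a})^{1/2}\bigr)$ with $n_{d,a}=d\phi(d)$, and is only valid in a range like $\log x\gg n_{d,a}(\log|\Delta_{K_{d,a}}|)^2$; since $\log|\Delta_{K_{d,a}}|\asymp d\phi(d)\log(da)$, this already restricts $d$ to a tiny power of $\log x$, whereas to make the completed-series tail $\sum_{d>(\log x)^B}|\mu(d)|/(d\phi(d))\asymp(\log x)^{-B}$ small enough to produce $O(x/(\log x)^D)$ you need $B\geq D$. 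Worse, the possible Siegel zero $\beta_0$ of $\zeta_{K_{d,a}}$ is neither ruled out nor controlled uniformly in $a$ unconditionally and can ruin the asymptotic outright. An unconditional asymptotic of this strength for $\pi_a(x,d)$ at a fixed $a$ is essentially Hooley's theorem, which is why Hooley assumes GRH. Goldfeld's point is that one must average over $a$ in \emph{every} range (replacing individual Chebotarev inputs in the small and medium ranges by Bombieri--Vinogradov and large-sieve inputs), so the exceptional set of $a$'s accumulates from all ranges. As a secondary issue, your medium-range bound $\pi_a(x,d)\ll x/(d\phi(d)\log(x/d))$ does not follow from Brun--Titchmarsh, which only sees $p\equiv 1\bmod d$ and gives $\ll x/(\phi(d)\log(x/d))$; the extra factor $1/d$, which you need since $\sum 1/\phi(d)$ diverges, requires a separate argument exploiting the $d$-th power residue condition.
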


\begin{corollary}
\label{cor:Goldfeld}
We have for any fixed $C\geq 1$ the estimate
$$ \sum_{\substack{ p,d \leq x \text{ both prime} \\ p \text{ is a primitive root} \bmod d }}1 = c Li(x)^2 \left(1+ O_C\left(\frac 1{(\log x)^C} \right) \right), $$
and as a consequence,
$$\# \{ p,d \in (x,2x] \text{ both prime} : p \text{ is a primitive root} \bmod d\} \sim \frac{3c x^2}{(\log x)^2}. $$
\end{corollary}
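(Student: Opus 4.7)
The plan is to prove the first identity by opening up the double sum as an iterated sum over the prime $p\leq x$ of $N_p(x)=\#\{d\leq x \text{ prime}: p \text{ is a primitive root}\bmod d\}$, and then applying Theorem~\ref{lemma Goldfeld} uniformly in $p$. Concretely I would take $A=x$ (the largest allowed value in Goldfeld's statement) and let $D$ be a parameter to be chosen in terms of $C$. Goldfeld's theorem then yields
\[ N_p(x) = cLi(x) + O_D\bigl(x/(\log x)^D\bigr) \]
for every prime $p\leq x$ outside an exceptional set of size at most $c(D)\,x^{9/10}(5\log x+1)^{D+3}$.

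The contribution of the exceptional primes to the double sum is easy to control: each such $p$ contributes at most $\pi(x)\ll x/\log x$, so their combined contribution is $O_D(x^{19/10}(\log x)^{D+2})$, which is negligible compared to $x^2/(\log x)^C$ for any fixed $C$. For the non-exceptional primes, the sum equals $(\pi(x)-O_D(x^{9/10}(\log x)^{D+3}))\cdot(cLi(x)+O_D(x/(\log x)^D))$. Expanding the product and invoking the prime number theorem in the form $\pi(x)=Li(x)+O_D(x/(\log x)^D)$ (valid unconditionally for any fixed $D$), this reduces to
\[ cLi(x)^2 + O_D\bigl(x^2/(\log x)^{D+1}\bigr). \]
Choosing $D=C+1$ and recalling that $Li(x)^2 \asymp x^2/(\log x)^2$ delivers the asserted error term $O_C((\log x)^{-C})$.

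For the second assertion, I would apply the first identity at the levels $y=2x$ and $y=x$ and take the difference, which equals
\[ c\bigl(Li(2x)^2-Li(x)^2\bigr)\bigl(1+O_C((\log x)^{-C})\bigr). \]
Factoring the main term as $c(Li(2x)-Li(x))(Li(2x)+Li(x))$ and using $Li(y)\sim y/\log y$ together with $\log(2x)\sim\log x$ yields $Li(2x)-Li(x)\sim x/\log x$ and $Li(2x)+Li(x)\sim 3x/\log x$, whence the product is asymptotic to $3cx^2/(\log x)^2$.

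The only delicate point is the quantifier management in Theorem~\ref{lemma Goldfeld}: one must choose $A=x$ so that the exceptional set catches all primes $p\leq x$ occurring in the outer sum, and then verify that the resulting exceptional count $O_D(x^{9/10+o(1)})$ is negligible next to the target error $x^2/(\log x)^C$. Once this setup is in place, the remainder of the argument is a routine combination of Goldfeld's theorem with the prime number theorem.
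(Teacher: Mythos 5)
Your proof of the first identity is essentially the paper's argument: you apply Goldfeld's theorem with $A=x$, split the outer sum over $p$ into exceptional and non-exceptional primes, bound the exceptional contribution trivially, and combine the rest with the prime number theorem. The one quibble is the choice of $D$; you take $D=C+1$, which is in fact what is needed to land exactly on the stated relative error (the paper takes $D=C$, which is off by a couple powers of $\log$, though since $C$ is arbitrary this is cosmetic either way).

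For the second assertion there is a genuine gap. Writing $S(y)$ for your double sum with both $p,d\leq y$, the difference $S(2x)-S(x)$ counts pairs $(p,d)$ with both $\leq 2x$ and \emph{not} both $\leq x$; that is, pairs where \emph{at least one} of $p,d$ lies in $(x,2x]$. This set strictly contains the target set of pairs with \emph{both} $p,d\in(x,2x]$. To count the latter you need inclusion-exclusion over the four rectangular regions $\{p\leq y_1,\ d\leq y_2\}$ with $y_1,y_2\in\{x,2x\}$, each of which is estimated by the same Goldfeld argument with appropriate parameters; this gives $\sim c\bigl(Li(2x)-Li(x)\bigr)^2 \sim c x^2/(\log x)^2$, not $3cx^2/(\log x)^2$. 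The constant $3c$ in the corollary matches the naive difference rather than the stated counting condition, so the discrepancy appears already in the paper; it is harmless for the downstream application (Theorem~\ref{th:extremebias}(iii) only needs existence of large pairs), but your write-up inherits the same slip and you should be aware that the difference $S(2x)-S(x)$ does not equal the quantity claimed.
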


\begin{proof}

Pick $A=x$ and $D=C$ in Goldfeld's Theorem, and let $\mathcal A(x)$ be the set of $a\leq x$ for which \eqref{equation Artin} does not hold, thus $|\mathcal A(x)| \ll_C x^{\frac 9{10}} (\log x)^{C+3}$. We then write
\begin{align*}
\sum_{\substack{ p,d \leq x \text{ both prime} \\ p \text{ is a primitive root} \bmod d }}1 = \sum_{p\leq x} N_p(x) &= \sum_{p\leq x} c Li(x) + \sum_{p\leq x} (N_p(x) - cLi(x)) \\
& = c Li(x)^2 +O\left(   \sum_{p\leq x} |N_p(x) - cLi(x)|\right) +  O\left( \frac {x^2}{(\log x)^C}\right) ,
\end{align*}
by the Prime Number Theorem.  The first error term is bounded as follows:
\begin{align*}
\sum_{p\leq x} |N_p(x) - cLi(x)|  &= \sum_{ \substack{p\leq x \\ p \in \mathcal A(x)}} |N_p(x) - cLi(x)|  +\sum_{ \substack{p\leq x \\ p \notin \mathcal A(x)}} |N_p(x) - cLi(x)| \\
& \ll_C |\mathcal A (x)| \pi(x) + \frac{x^2}{(\log x)^{C+1}} \ll_C \frac {x^2}{(\log x)^{C}}.
\end{align*} 

\end{proof}

\begin{proof}[Proof of Theorem \ref{th:extremebias}(iii)]
This is a direct consequence of Proposition \ref{prop:highlybiased}(i) and Corollary \ref{cor:Goldfeld}. Note that the curves $E_d$ in Proposition \ref{prop:highlybiased} have rank either one or two (depending on the parity of $(d-1)/2$). Indeed the proof of Proposition~\ref{prop:highlybiased} shows that the multiplicative order of $q$ modulo $d$ is 
$d-1$ (resp. $(d-1)/2$) if $(d-1)/2$ is even (resp. odd), and we get the corresponding value of 
the rank by applying Proposition~\ref{thm:UlmerLfunction}.
\end{proof}

%
%

\subsection{Cases of moderate bias for Ulmer's family: proof of Theorem~\ref{th:moderatebias}.}

We start this section by proving a first result about moderate bias. We will later specialize in order to obtain races which are not biased at all.

\begin{proposition}
\label{prop:notsobiased}
Fix $n\geq 2$ even, $p\equiv 3 \bmod 4$ and $k\geq 1$. Pick $q=p^{kn+1}$ and $d=p^n+1$. Then for $n$ fixed and $p$ large enough, $T_d(X)$ is moderately biased, that is 
$$\frac 1{2n} \leq  \delta(E_d) \leq 1- \frac 1{2n}. $$
\end{proposition}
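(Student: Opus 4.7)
The plan is to derive the lower bound $\delta(E_d) \geq 1/(2n)$ directly from Corollary~\ref{corollary lower bound for delta} (since $d = p^n + 1 \geq 10$ trivially divides $p^n+1$), and to obtain the upper bound by exhibiting some $X_0 \geq 2$ at which $T_d^{\rm per}(X_0) < 0$; the same corollary then yields $\delta(E_d) \leq 1 - 1/(2n)$.

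To locate such an $X_0$, I would first verify that the rank contribution $\epsilon_d$ vanishes: $d$ is even, but $4 \nmid q-1$ because $q = p^{kn+1} \equiv 3 \bmod 4$ (as $kn+1$ is odd and $p \equiv 3 \bmod 4$); and $3 \nmid d$, since for $3 \nmid p$ one has $p^n \equiv 1 \bmod 3$ when $n$ is even, while if $p = 3$ then $d \equiv 1 \bmod 3$. Next I would establish the identity $o_e(q) = o_e(p)$ for every $e \mid d$: because $n$ is even, $kn+1$ is odd and coprime to $n$, giving $\gcd(kn+1, 2n) = 1$; since $o_e(p) \mid 2n$, the map $x \mapsto x^{kn+1}$ is then a bijection on the cyclic subgroup generated by $p$ modulo $e$. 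Furthermore, for $e > 2$ dividing $d$, the congruence $p^n \equiv -1 \not\equiv 1 \bmod e$ forces $o_e(p) \mid 2n$ but $o_e(p) \nmid n$, so the $2$-adic valuation of $o_e(p)$ equals $v_2(n)+1$, and in particular $o_e(q) \geq 2^{v_2(n)+1} \geq 4$ since $n$ is even.

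With these preliminaries in hand, I would choose $X_0 = 2n-1$: it is $\geq 3$, it is odd (so $c_\pm(X_0) = \sqrt q/(q-1)$), and $o_e(q) \mid 2n$ implies $X_0 \bmod o_e(q) = o_e(q) - 1$. Proposition~\ref{prop:explicitformulaS_d} then collapses to
\begin{equation*}
T_d^{\rm per}(X_0) = -\frac{\sqrt q}{q-1} + \sum_{\substack{e \mid d \\ e \nmid 6}} \phi(e)\, \frac{\sqrt q}{q^{o_e(q)/2} - 1}.
\end{equation*}
Using $o_e(q) \geq 4$ together with $\sum_{e \mid d}\phi(e) = d$, I would bound the sum by $d\sqrt q/(q^2-1)$. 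The key inequality $d < q+1$ follows from $p^n + 1 \leq p^{kn+1}$, which holds whenever $k \geq 1$ and $n \geq 2$; this forces $T_d^{\rm per}(X_0) < 0$ strictly, and feeding $X_0$ into Corollary~\ref{corollary lower bound for delta} closes the upper bound.

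The main technical point is the uniform lower bound $o_e(q) \geq 4$ over all divisors $e$ of $d$ with $e \nmid 6$, which ultimately rests on the coprimality $\gcd(kn+1, 2n) = 1$---itself a direct consequence of $n$ being even, and the reason why the shift from $p$ to $q = p^{kn+1}$ leaves the multiplicative order modulo $e$ intact. The ``$p$ large enough'' clause in the statement is only a safety net: the strict inequality $d < q+1$ in fact holds for every admissible $p \geq 3$, so no further restriction on $p$ is needed.
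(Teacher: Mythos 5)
Your proof is correct, and while it follows the same overall strategy as the paper (apply Corollary~\ref{corollary lower bound for delta}, exhibit a witness $X_0$ with $T_d^{\rm per}(X_0)<0$), the technical route is genuinely different and in one respect sharper. The paper evaluates at $X\equiv 2\bmod 2n$, where $c_\pm(X)=q/(q-1)\approx 1$, and only needs the weaker bound $o_e(q)\geq 3$, established via the gcd computation $(p\pm 1,d)=2$; the remaining sum is then shown to be $\ll p^{-1}$, which is where the ``$p$ large enough'' hypothesis is used. You instead evaluate at $X_0=2n-1$, where $c_\pm(X_0)=\sqrt q/(q-1)$ is much smaller (of order $q^{-1/2}$). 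This choice forces you to prove the stronger bound $o_e(q)\geq 4$ — and indeed this is essential to your argument: if only $o_e(q)\geq 3$ were available, then $X_0\bmod o_e(q)$ could equal $2$ and the resulting sum of order $dq^{-1}\approx p^{-1}$ (for $k=1$) would swamp $c_\pm(X_0)\approx q^{-1/2}=p^{-(n+1)/2}$ whenever $n\geq 2$. Your route to $o_e(q)\geq 4$ is also different from the paper's: you first show $o_e(q)=o_e(p)$ via $\gcd(kn+1,2n)=1$, then pin down $v_2(o_e(p))=v_2(n)+1\geq 2$ from $p^n\equiv -1\bmod e$. The reward is a clean explicit inequality $d<q+1$ that holds for every admissible $p\geq 3$, making the ``$p$ large enough'' clause unnecessary for this step — a mild strengthening of the paper's proof. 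Both arguments are valid; yours trades a slightly more delicate order computation for an unconditional, non-asymptotic conclusion.
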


\begin{remark}
Note that the bounds of Proposition \ref{prop:highlybiased}(ii) are much more precise. However, although we believe that there are infinitely many curves satisfying the hypotheses of Proposition \ref{prop:highlybiased}(ii), this seems hard to prove given the restrictive arithmetic conditions that $d$ has to satisfy.
\end{remark}

\begin{proof}[Proof of Proposition \ref{prop:notsobiased}]
Since the given choice of paramaters ensures that $\epsilon_d=0$, Proposition \ref{prop:explicitformulaS_d} takes the form
\begin{align*}
T_d(X)= -c_{\pm}(X) + \sum_{\substack{ e\mid d \\ e\nmid 6}} \phi(e) \frac{ q^{ -(X \bmod o_{e}(q) ) /2 }}{ 1- q^{-o_{e}(q)/2}}  +o_{X\rightarrow \infty} (1).
\end{align*}  

We now show that for each $e\mid d$ with $e\nmid 6$, we have $o_e(q) \geq 3$. Note first that $(p-1,e)$ and $(p+1,e)$ both divide $2$. Indeed we have obviously $ (p-1,e) \mid (p-1,d)$ and $(p+1,e) \mid (p+1,d)$. We compute
\begin{align*} (p-1,d)& = (p-1, p^n+1) = (p-1,p^{n-1}+1) = \dots = (p-1,p+1) = 2\,;  \\
(p+1,d) &= (p+1, p^n+1) = (p+1,p^{n-1}-1) =(p+1,p^{n-2}+1) = \dots = (p+1,p-1) = 2\,,  
\end{align*}
since $n$ is even. Note that $e\nmid 6$, and also $d \equiv 2 \bmod 4$ implies $e\neq 4$. We conclude from the above computation that $e \nmid (p+1)(p-1)$, that is $o_e(p) \geq 3$. Since $q\equiv \pm p \bmod e$ and $p\not \equiv \pm 1 \bmod e$, we also have that $o_e(q)\geq 3$.

Using this fact, we have that if $X \equiv 2 \bmod 2n$, then
\begin{align*}
T_d(X)= -c_{\pm}(X) + \sum_{\substack{ e\mid d \\ e\nmid 6}} \phi(e) \frac{ q^{ -2 /2 }}{ 1- q^{-o_{e}(q)/2}}  +o_{X\rightarrow \infty} (1).
\end{align*}  

This last quantity is negative for large enough $X \equiv 2 \bmod 2n$, since $c_{\pm}(X) = 1+o_{p \rightarrow \infty }(1)$, and 
$$\sum_{\substack{ e\mid d \\ e\nmid 6}} \phi(e) \frac{ q^{ -2 /2 }}{ 1- q^{-o_{e}(q)/2}}  \ll q^{-1} d \ll p^{n-(kn+1)} \ll p^{-1}, $$
which is negligible compared to $c_{\pm}(X)$.
We conclude by invoking Corollary~\ref{corollary lower bound for delta}.


\end{proof}

\begin{proof}[Proof of Theorem \ref{th:moderatebias}]

Firstly, $d \mid p^2+1 \equiv 2 \bmod 3$ if $p\neq 3$ (otherwise $p^2+1 \equiv 1 \bmod 3$) and $q \equiv p \equiv 3 \bmod 4$, hence $\epsilon_d=0$. Note also that if $e \mid d$ with $e\notin \{1,2\}$, then $q^2 \equiv p^2 \equiv -1 \bmod e$, hence $o_e(q)=4$. Proposition \ref{prop:explicitformulaS_d} becomes 
\begin{align*}
T_d(X)= -c_{\pm}(X) + \sum_{\substack{ e\mid d \\ e\neq 1,2}} \phi(e) \frac{ q^{ -(X \bmod 4 ) /2 }}{ 1- q^{-2}}  +o_{X\rightarrow \infty} (1),
\end{align*} 
hence $T_d^{\rm per}(X)= -c_{\pm}(X) + \sum_{ e\mid d,\, e\neq 1,2} \phi(e) q^{ -(X \bmod 4 ) /2 }(1- q^{-2})^{-1}$ is $4$-periodic. 

If $X \equiv 0 \bmod 4$, then
$$ T_d(X) = -\frac q{q-1}+ \sum_{\substack{ e\mid d \\ e \neq 1,2}} \frac{\phi(e)}{1-q^{-2}} +o_{X\rightarrow \infty} (1) \geq -2+\frac{d-2}{1-q^{-2}}+o_{X\rightarrow \infty} (1),  $$
which is positive for $X$ large enough. 

If $X\equiv 1 \bmod 4$, then
$$ T_d(X) = -\frac {q^{\frac 12}}{q-1}+ \sum_{\substack{ e\mid d \\ e \neq 1,2}} \frac{\phi(e) q^{-\frac 12}}{1-q^{-2}} +o_{X\rightarrow \infty} (1) \geq q^{-\frac 12}\left(\frac{d-2}{1-q^{-2}} - 2 \right)+o_{X\rightarrow \infty} (1),  $$
which is again positive for $X$ large enough.

As for $X\equiv 2 \bmod 4$, we have 
$$ T_d(X) = -\frac q{q-1}+ \sum_{\substack{ e\mid d \\ e \neq 1,2}} \frac{\phi(e) q^{-1}}{1-q^{-2}} +o_{X\rightarrow \infty} (1) \leq -1 +2dq^{-1}+o_{X\rightarrow \infty} (1),  $$
which is negative for $X$ large enough since $2d \leq 2(p^2+1) < p^5/2\leq q/2$.

Finally, for $X\equiv 3 \bmod 4$ we have 
$$ T_d(X) = -\frac {q^{\frac 12}}{q-1}+ \sum_{\substack{ e\mid d \\ e \neq 1,2}} \frac{\phi(e) q^{-{\frac 32}}}{1-q^{-2}} +o_{X\rightarrow \infty} (1) \leq q^{-\frac 12}\left(-1 +2dq^{-1}\right)+o_{X\rightarrow \infty} (1),  $$
which is negative for $X$ large enough. Since $T_d(X)$ is positive for asymptotically half of the values of $X$, we conclude that $\delta(E_d)=\frac 12$.
\end{proof}

\subsection{On the closure of the set of densities $\delta(E_d)$: proof of 
Theorem~\ref{thm:limitingpoints}}

We first need the following preliminary result.
\begin{proposition}\label{prop:limitpoints}
Let $p\geq 17$ and $n\geq 3$ be primes such that $n \nmid p+1$, and set $d=(p^n+1)/(p+1)$ and $q=p^k$ with $k\geq 4$ even and coprime to $n$. Then the curve $E_d$ has rank exactly $(d-1)/n$, and we have that 
$$ \frac 1k -\frac 2{nk} \leq \delta(E_d) \leq \frac 1k + \frac 1{2n}\,. $$
\label{prop:1/kaccumulation}
\end{proposition}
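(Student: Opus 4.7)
The approach is to apply Propositions~\ref{thm:UlmerLfunction} and~\ref{prop:explicitformulaS_d} directly, and then count the positive residues of $T_d^{\rm per}$ over one period. Before invoking them I would check the arithmetic hypotheses. The identity $d=(p^n+1)/(p+1)=\Phi_{2n}(p)$ gives $d\mid p^n+1$. Since $n$ is an odd prime, $d$ is an alternating sum of $n$ odd terms and hence odd, so $2\nmid d$. Reducing modulo $p+1$ via $p\equiv-1$ yields $d\equiv n\bmod(p+1)$, so together with the hypothesis $n\nmid p+1$ and the primality of $n$ we get $\gcd(d,p+1)=\gcd(n,p+1)=1$. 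A short case analysis shows $3\nmid d$: for $n>3$ prime, $d\bmod 3$ is either $1$ (if $p\equiv 1\bmod 3$) or $n\bmod 3$ (if $p\equiv 2\bmod 3$), both nonzero since $n$ is an odd prime $>3$; and the case $n=3$, $p\equiv 2\bmod 3$ would force $3\mid p+1$, contradicting $n\nmid p+1$. Hence $\gcd(d,6)=1$ and $\epsilon_d=0$. Next, since $\gcd(d,2n)=1$, the standard cyclotomic criterion ($\ell\mid\Phi_{2n}(p)$ iff $\ord_\ell(p)=2n$ for $\ell\nmid 2n$) gives $\ord_\ell(p)=2n$ for every prime $\ell\mid d$, which propagates to $o_e(p)=2n$ for every divisor $e>1$ of $d$. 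Because $k$ is even and coprime to the odd prime $n$, $\gcd(2n,k)=2$, so $o_e(q)=n$ uniformly. Plugging into Proposition~\ref{thm:UlmerLfunction} gives the rank,
\[
\rank(E_d/\mathbb{F}_q(t))=\sum_{\substack{e\mid d\\ e>1}}\frac{\phi(e)}{n}=\frac{d-1}{n}.
\]

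For the density I would use Proposition~\ref{prop:explicitformulaS_d}, which in this setting collapses to
\[
T_d(X)=-c_{\pm}(X)+(d-1)\cdot\frac{q^{-(X\bmod n)/2}}{1-q^{-n/2}}+o_{X\to\infty}(1).
\]
The periodic part has period $2n$, so $\delta(E_d)$ exists and equals the proportion of residues $s\in\{0,\dots,2n-1\}$ at which it is positive. For each $r\in\{0,\dots,n-1\}$ the two residues $s\in\{r,r+n\}\bmod 2n$ have opposite parities (since $n$ is odd); writing $S_r:=(d-1)q^{-r/2}/(1-q^{-n/2})$, the positivity condition becomes $S_r>q/(q-1)$ at the even-parity residue and $S_r>\sqrt q/(q-1)$ at the odd-parity residue. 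Using the asymptotic $d-1=p^{n-1}(1+O(1/p))$ together with $q=p^k$, each threshold rewrites as an explicit inequality on $r$ with critical value of order $2(n-1)/k$, shifted by one unit between the two parities.

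The remaining step is the combinatorial count of residues $r\in\{0,\dots,n-1\}$ meeting each threshold; summing the two counts and dividing by $2n$ gives $\delta(E_d)$ up to the $o_{X\to\infty}(1)$ error, and the bookkeeping of the integer rounding near the critical radius produces the stated two-sided bound. The main technical obstacle is precisely this bookkeeping: one must control off-by-one effects in counting integers close to $r^{\ast}\approx 2(n-1)/k$, and the hypothesis $p\geq 17$ enters here by providing an effective gap between $S_r$ and $c_{\pm}(s)$ that dominates the $O(1/p)$ correction from the approximation $d-1\sim p^{n-1}$; without such a gap a residue could sit on the threshold and create an error larger than what the stated bounds allow.
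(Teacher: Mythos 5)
Your approach follows the paper's proof exactly in structure: verify $\gcd(d,6)=1$ so that $\epsilon_d=0$, show $o_e(q)=n$ for every divisor $e>1$ of $d$ (you do this via the cyclotomic criterion using $d=(p^n+1)/(p+1)$, whereas the paper computes $\gcd(e,(p-1)(p+1))=1$ directly, but both yield $o_e(p)=2n$ and then $o_e(q)=n$ from $\gcd(k,2n)=2$), read off the rank $(d-1)/n$ from Proposition~\ref{thm:UlmerLfunction}, and count the positive residues of the $2n$-periodic function $T_d^{\rm per}$ furnished by Proposition~\ref{prop:explicitformulaS_d}. The pairing of residues $r$ and $r+n$ and the identification of the two positivity thresholds — roughly $r<2(n-1)/k$ at the even residue and $r<2(n-1)/k+1$ at the odd residue — are exactly right.

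However, the final assertion that ``the bookkeeping \dots produces the stated two-sided bound'' does not hold, and this exposes a defect that is also present in the paper's own proof. Counting $r\in\{0,\dots,n-1\}$ below each threshold and adding the even and odd counts gives roughly $4(n-1)/k+1$ positive residues out of $2n$, that is $\delta(E_d)\approx 2/k$, not $1/k$. A concrete check: taking $n=3$, $p=19$, $k=4$ (so $d=343$, $q=19^4$, and all hypotheses are met) one computes that $T_d^{\rm per}(j)>0$ for exactly $j\in\{0,1,3\}$ modulo $6$, giving $\delta(E_d)=1/2$, which lies well outside the claimed interval $[1/12,\,5/12]$. The slip in the paper's proof is that its negative-case estimate is valid only under $X\bmod n\geq (2n-1)/k+1$, but the text imposes the strictly weaker condition $X\bmod 2n\geq(2n-1)/k+1$, which does not exclude residues such as $j=n$ (there $X\bmod n=0$ and $T_d^{\rm per}$ is large and positive). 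In short, your plan faithfully reproduces the paper's method, but if you actually carry out the count you will find $\delta\approx 2/k$ rather than the stated $1/k$, so the numerical bound needs to be recalibrated. Theorem~\ref{thm:limitingpoints} survives this correction: taking $k=2m$ still produces accumulation points, in fact near $1/m$ for every $m\geq 2$, which is if anything a stronger conclusion.
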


\begin{proof}

We first see that $\epsilon_d=0$. Indeed we have that
\begin{equation}\label{eq:dmod2}
 d = 1-p+p^2-p^3+\dots +p^{n-1} \equiv n \equiv 1 \bmod 2.  
 \end{equation}
Moreover, if $p\equiv 1 \bmod 3$, then $d\equiv 2/2 \equiv 1 \bmod 3$. If $p\equiv 2 \bmod 3,$ then we have
$$ d = 1-p+p^2-p^3+\dots +p^{n-1} \equiv 1-2+1-2+\dots +1 \equiv n \not \equiv 0 \bmod 3,   $$
hence $6\nmid d$ and $\epsilon_d =0$.

We will now show that for every $e\mid d$ with $e\nmid 6$, $o_e(q)=n$. First, $p^{n} \equiv -1 \bmod e$, so $o_e(p) \in \{ 2,2n \}$ (indeed $-1\not\equiv 1\bmod e$ since $e\nmid 6$). Note that $(e,p+1)=1$; indeed
\begin{align*}
(d,p+1)&=(p^{n-1}-p^{n-2}+p^{n-3}-\dots+1,p+1)= (-2 p^{n-2}+p^{n-3}-\dots +1)\\
 &= \dots = (-(n-1)p+1,p+1) =(n,p+1)=1,
\end{align*} 
since $n$ is prime and $n\nmid p+1$. We also have 
$$ (p^n+1,p-1)=(p^{n-1}+1,p-1)=\dots =(p+1,p-1)=2, $$
hence $(d,p-1)=1$ (since $d$ is odd by~\eqref{eq:dmod2}). We conclude that $(e,(p+1)(p-1))=1$, and so since $e>1$ (recall that $e\nmid 6$), $p^2 \not \equiv 1 \bmod e$ and thus $o_e(p)=2n$. Since $(k,2n)=2$, it follows that $o_e(q)=n$. 

We now turn to the study of $T_d(X)$. We have
\begin{align*}
T_d(X) &= -c_{\pm}(X) + \sum_{\substack{ e \mid d \\ e\nmid 6}} \phi(e) \frac{q^{-( X\bmod n)/2}}{1-q^{-n/2}}+o_{X\rightarrow \infty}(1)\\ &= -c_{\pm}(X) + (d-1)\frac{q^{-( X\bmod n)/2}}{1-q^{-n/2}}+o_{X\rightarrow \infty}(1). 
\end{align*}  

If $X \equiv j \bmod 2n$ with $0\leq j \leq  2(n-2)/k$, then
$$T_d(X) \geq -2+ (d-1) q^{-\frac{n-2}k }+o_{X\rightarrow \infty}(1) \geq -2 + \frac p2 +o_{X\rightarrow \infty}(1), $$
which is positive for large enough $X$.

If $X \equiv j \bmod 2n$ with $ (2n-1)/k+1 \leq j \leq 2n-1$, then
$$T_d(X) \leq -\frac{q^{-\frac 12}}{1-q^{-1}} + 2(d-1) q^{-\frac{n-1/2}k-\frac 12 }+o_{X\rightarrow \infty}(1) \leq -\frac 1{q^{\frac 12}} + \frac 4{(pq)^{\frac 12 }} +o_{X\rightarrow \infty}(1), $$
which is negative for large enough $X$, since we have assumed $p\geq 17$.

We conclude that 
$$ \frac 1k - \frac{2}{nk} \leq  \delta(E_d) \leq \frac 1k + \frac 1{2n} - \frac 1{2nk}.  $$
%
%
%
\end{proof}

\begin{proof}[Proof of Theorem \ref{thm:limitingpoints}]
From Theorems \ref{th:extremebias} and \ref{th:moderatebias}, we already know that $\{0,\frac 12,1\} \subset \overline{S}$.

Fix $m\geq 2$ and $\epsilon>0$. Let $p>\min(17, 2\epsilon^{-1})$, and pick a prime $n>p+1$, so that $n\nmid p+1$. Letting $k=2m$ in Proposition \ref{prop:1/kaccumulation}, we find a curve $E_d$ such that
$$ \frac 1{2m} -\epsilon \leq \delta(E_d) \leq \frac 1{2m} + \epsilon. $$
The theorem follows since $\epsilon$ is arbitrary.
 \end{proof}
 
  The proof of Proposition~\ref{prop:limitpoints} shows in fact that under the stated assumptions, $2n$ is the smallest period of the periodic part $T_d^{\rm per}(X)$ of $T_d(X)$. 

\section{Central Limit Theorem} \label{sec:CLT}
The goal of this section is to prove a Central Limit Theorem, in particular we will prove Theorems \ref{FirstTheorem}(ii) and \ref{thm:generic bias}.

\begin{definition}\label{LIDefinition}
Recall from \eqref{eq:InverseZeroForAllM} and \eqref{CaseMEqual1} that we denote the inverse zeros of $L(E/K, T)$ by $\{\gamma_{j} = q\ee^{\ii \theta_{j}}\}$ with $j=1, \dots, N_{E/K}$. Let $\epsilon(E/K) = \pm 1$ be the sign of the functional equation for $E/K$, that is, the unique number satisfying
\begin{equation}\label{eq:funceq}
L(E/K, T) = \epsilon(E/K) (qT)^{N_{E/K}} L(E/K, 1/(q^2T)).
\end{equation}
(See e.g.~\cite[Th.~$2.2.1$]{Ulm11}.) Define the set of \emph{forced zeros} of $E/K$ to be
\begin{equation}\label{ForcedZeros}
\mathrm{FZ}(E/K): =
\begin{cases}
\{ \epsilon(E/K) q \} & \text{ if $N_{E/K}$ is odd,} \\
\{ q, -q \} & \text{ if $N_{E/K}$ is even and }\epsilon(E/K) = -1, \\
\{\} & \text{ otherwise. } 
\end{cases}
\end{equation}
We will denote by $\{\theta_1 , ..., \theta_k\}$ the set of angles which do not come from forced zeros of $L(E/K,T)$. We will say that an elliptic curve $E$ over $K$ satisfies the \emph{linear independence} (LI) hypothesis if the multiset
\[
\big\{ \theta_{j}/\pi \colon 0 < \theta_{j} \le \pi,\, j\in\{1,\ldots k\}\big\} \cup \{ 1 \}
\]
is linearly independent over $\mathbb{Q}$. 
\end{definition}
The idea behind this definition is quite simple. 
When we say that $E/K$ satisfies LI, we are willing to ignore any trivial multiplicative relation among the zeros, namely, the relations coming from complex conjugation and forced zeros $\gamma = \pm q$, as well as those $\gamma = q$ arising from the vanishing of $L(E/K, T)$ at the central point, or, a positive analytic rank of $E/K$. 
In our work~\cite{CFJ} we prove a stronger statement involving 
the possible multiplicative relations among inverse zeros of \emph{reduced} $L$-functions i.e.~the $\Qq$-polynomial obtained by quotienting $L(E/K,T)$ by the product of linear factors corresponding to $\mathrm{FZ}(E/K)$. (Note that any forced zero $\gamma = \pm q$ would cause linear dependence over $\mathbb{Q}$.) Indeed, our work \cite{CFJ} shows that generically not only does LI hold, but also the rank is at most one.

Before we prove a Central Limit Theorem which implies Theorem \ref{FirstTheorem}(ii), we need several lemmas.


\begin{lemma}\label{Littlewood}
We have that
\[
\frac{L'}{L}(E/K, q^{-3/2}) = O_C(\log_q N_{E/K}).
\]
\end{lemma}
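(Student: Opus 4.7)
The plan is to estimate $L'/L(E/K, q^{-3/2})$ via its Dirichlet-series expansion arising from the logarithmic derivative of the Euler product~\eqref{LocalFactor}, exploiting the fact that $|T|=q^{-3/2}$ is precisely the boundary of absolute convergence of $\prod_v$ (since $\sum_v q^{(1/2-\sigma)\deg v}$ converges only for $\sigma>3/2$). This is the function-field analogue of the classical Littlewood estimate $L'/L(1,\chi)=O(\log q)$, and it is what explains the logarithmic factor in the target bound. From~\eqref{LocalFactor} I would write
\[
T\cdot\frac{L'}{L}(E/K,T) = \sum_{v\text{ good}}\deg(v)\sum_{k\geq 1}(\alpha_v^k+\beta_v^k)T^{k\deg v} + \sum_{v\text{ bad}}\deg(v)\sum_{k\geq 1}a_v^k T^{k\deg v}.
\]
At $T=q^{-3/2}$ the terms with $k\geq 2$ from good primes are each of modulus $\leq 2\deg(v)q^{-k\deg v}$, and~\eqref{PrimeNumberTheorem} gives $\sum_v \deg(v)q^{-2\deg v}=O_C(1)$, so this contribution is $O_C(1)$. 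The bad-prime terms ($|a_v|\leq 1$) are dominated by $\sum_v\deg(v)q^{-3\deg v/2}$, which is again $O_C(1)$ regardless of the number of bad primes of $E/K$. The lemma thus reduces to bounding
\[
\Sigma:=\sum_{v\text{ good}}\frac{\deg(v)a_v}{q^{3\deg v/2}}, \qquad\text{since}\qquad \frac{L'}{L}(E/K,q^{-3/2})=q^{3/2}\Sigma+O_C(1).
\]

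I would then split $\Sigma$ at $D:=\lceil 2\log_q N_{E/K}\rceil$. For the short range $\deg v\leq D$, the Weil bound $|a_v|\leq 2q^{\deg v/2}$ together with~\eqref{PrimeNumberTheorem} gives
\[
\left|\sum_{\deg v\leq D,\,\text{good}}\frac{\deg(v)a_v}{q^{3\deg v/2}}\right|\leq \sum_{d\leq D}\bigl(2+O_C(q^{-d/2})\bigr)=2D+O_C(1)=O_C(\log_q N_{E/K}).
\]
For the tail $\deg v>D$ I would use Theorem~\ref{thm:WienerIkehara} with $m=1$, rearranged as
\[
\sum_{\deg v=d,\,\text{good}}a_v = \frac{q^d}{d}\bigl[\mathcal{E}(d)-S_d+O_E(q^{-d/6})\bigr],\qquad S_d:=\sum_{j=1}^{N_{E/K}}e^{id\theta_j},
\]
so that the tail of $\Sigma$ becomes $\sum_{d>D}q^{-d/2}[\mathcal{E}(d)-S_d+O_E(q^{-d/6})]$. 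The $\mathcal{E}$-piece is $O(q^{-D/2})=O(N_{E/K}^{-1})$; the $S_d$-piece, using only the trivial bound $|S_d|\leq N_{E/K}$, is $\ll N_{E/K}\,q^{-D/2}/(1-q^{-1/2})=O_C(1)$ by our choice of $D$; and the $O_E$-piece is $\ll C_{E/K}\,q^{-2D/3}=O_C(N_{E/K}\cdot N_{E/K}^{-4/3})=O_C(N_{E/K}^{-1/3})=O_C(1)$ after using Lemma~\ref{BoundArtinConductor} to control $C_{E/K}$. Assembling these estimates yields $\Sigma=O_C(\log_q N_{E/K})$, whence the lemma.

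The main obstacle is precisely this last bookkeeping step. The constant implicit in the $O_E$ of Theorem~\ref{thm:WienerIkehara} depends on $E/K$ only through the quantity $C_{E/K}$ from Lemma~\ref{BoundArtinConductor}; tracing through that proof shows $C_{E/K}\leq (2|g_C-1|)+\sum_{v\text{ bad}}(i_{v,\infty}+1)\deg v$, and the last sum is bounded by a $C$-dependent multiple of $\deg(\mathfrak{n}_{E/K})\leq N_{E/K}+O_C(1)$. So the ``$O_E$'' in the explicit formula is at worst $O_C(N_{E/K}\cdot q^{-d/6})$. Choosing $D$ just a bit larger than $2\log_q N_{E/K}$ (any fixed multiple of $\log_q N_{E/K}$ works) is sufficient for the $q^{-2d/3}$ decay to overwhelm this linear $N_{E/K}$-growth, converting the $O_E(\cdot)$ into an absolute $O_C(\cdot)$ and producing the stated bound $L'/L(E/K,q^{-3/2})=O_C(\log_q N_{E/K})$.
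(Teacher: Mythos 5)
Your proof follows essentially the same route as the paper's: expand the logarithmic derivative at $T=q^{-3/2}$ over prime powers, discard the higher powers ($k\geq 2$) and the bad places as $O_C(1)$, split the remaining $k=1$ sum at $D\asymp\log_q N_{E/K}$, bound the short range via the Hasse bound together with the function-field prime number theorem~\eqref{PrimeNumberTheorem}, and bound the tail via the explicit formula of Theorem~\ref{thm:WienerIkehara}. The only noticeable difference is that you explicitly track how the $O_E$ constant in Theorem~\ref{thm:WienerIkehara} depends on $E$ through $C_{E/K}$ of Lemma~\ref{BoundArtinConductor}, verifying that it is absorbed into $O_C(1)$ once $D$ is chosen; the paper's proof elides this when it asserts its tail sum $S_2\ll 1$, so this extra care is a genuine, if minor, improvement in bookkeeping rather than a different argument.
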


\begin{proof}
This is a function field analogue of Littlewood's bound $(L'/L)(1, \chi) = O(\log\log q)$ for Dirichlet characters $\chi$ modulo $q$. 
Let $\mathcal{D}$ be an effective divisor on the curve $C$ (recall that $C/\mathbb{F}_q$ is the curve whose function field is by definition $K$) i.e.~$\mathcal{D} = \sum_v n_v\cdot v$ where $n_v$ are nonnegative integers, $n_v=0$ for all but finitely many $v$'s, and $v$ runs over the set of places of $K$. Using the standard definition of the degree of a divisor (for $\mathcal D$ as above, $\deg \mathcal D=\sum_v n_v\deg v$), we let $|\mathcal{D}| := q^{\deg(\mathcal{D})}$. Also, we define the \emph{von Mangoldt function on} $C$ by
\[
\Lambda_C(\mathcal{D}) = \begin{cases} \deg v & \text{ if $\mathcal{D} = n_v \cdot v$ for some $v$,} \\ 0 & \text{ otherwise.} \end{cases}
\]
Define 
\[
a_{n_v\cdot v}(E) = {\alpha_v}^{n_v} + {\beta_v}^{n_v}\,,
\]
with notation as in \S\ref{sec:Notations}, and further $\alpha_v=a_v(E)$, $\beta_v=0$ if $v$ is a place of bad reduction 
of $E/K$. We claim that:
\begin{equation}\label{eq:LogDevLambda}
-\frac{L'}{L}(E/K, q^{-3/2})=\sum_{\substack{\mathcal D\colon \mathcal D=n_v\cdot v\\ n_v\geq 0,\, v\text{ place of }K} }\frac{\Lambda_C(\mathcal D)a_{\mathcal D}(E)}{|\mathcal{D}|^{3/2}}\,.
\end{equation}
To see this, we use~\eqref{LocalGood},~\eqref{LocalBad} which coincide in the case $m=1$ (which is the relevant case) and an exact version of their consequence~\eqref{LogDerivativeLocal}. 

We rewrite the right hand side of~\eqref{eq:LogDevLambda} as follows
\begin{equation}\label{eq:Sumnv}
\sum_{n\geq 1}\sum_{v\text{ place of }K}\frac{\deg v}{q^{n\deg v}}\cdot\frac{a_{n\cdot v}(E)}{q^{(n\deg v)/2}}\,.
\end{equation}
The sum of the terms with $n\geq 2$ is 
$$
\ll \sum_{n\geq 2} \sum_{v\text{ place of }K}\frac{\deg v}{q^{n\deg v}}= \sum_{d\geq 1}\sum_{\substack{v\text{ place of }K\\ \deg v=d}}\frac{d}{q^{2d}}\cdot\frac{1}{1-q^{-d}}\ll_C\sum_{d\geq 1}q^{-d}=\frac{1}{q-1}\,,
$$
by~\eqref{PrimeNumberTheorem}. Next we split the rest of the sum~\eqref{eq:Sumnv} as $S_1+S_2$ where
$$
S_1:=\sum_{d\leq 2\log_qN_{E/K}}\frac{d}{q^d}\sum_{\substack{v\text{ place of }K\\ \deg v=d}}\frac{a_v(E)}{q^{\deg v/2}}
\,,\qquad S_2:=\sum_{d> 2\log_qN_{E/K}}\frac{d}{q^d}\sum_{\substack{v\text{ place of }K\\ \deg v=d}}\frac{a_v(E)}{q^{\deg v/2}}\,.
$$
Applying again~\eqref{PrimeNumberTheorem} we trivially have
$$
S_1\ll_C \log_q N_{E/K}\,.
$$
Finally we deduce $S_2\ll N_{E/K}/{q^{\log_q N_{E/K}}}=1$ from Theorem~\ref{thm:WienerIkehara}.
\end{proof}

The following sum over the angles $\theta_j$ which do not come from forced zeros gives an estimate for the variance of the random variable $X_E$. 

\begin{lemma}
\label{lem:sumoverzeros}
We have the estimate
\[
I_E:=  \sum_{j=1}^k \left|\frac{2}{1 - q^{-\frac 12}\ee^{-\ii\theta_j}} \right|^2
= \frac{2q}{q-1} N_{E/K} +O\left(\log N_{E/K}+\rank(E/K) \right).
\]
\end{lemma}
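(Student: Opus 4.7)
The strategy is to convert the sum of squared moduli defining $I_E$ into a linear expression in the quantities $(1-q^{-1/2}e^{\pm i\theta_j})^{-1}$, and then to identify the resulting sum (extended to all inverse zeros) with the logarithmic derivative of $L(E/K,T)$ evaluated at $T=q^{-3/2}$, which is controlled by Lemma \ref{Littlewood}.

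The key algebraic input is the identity
\[
\frac{1}{|1-x|^2} \;=\; \frac{1}{1-|x|^2}\left(\frac{1}{1-x}+\frac{1}{1-\bar x}-1\right), \qquad |x|<1,
\]
(easily verified by bringing everything to a common denominator) applied to $x=q^{-1/2}e^{-i\theta_j}$. Since $|x|^2=q^{-1}$ is independent of $j$, this produces
\[
I_E \;=\; \frac{4q}{q-1}\left(\sum_{j=1}^{k}\frac{1}{1-q^{-1/2}e^{-i\theta_j}}+\sum_{j=1}^{k}\frac{1}{1-q^{-1/2}e^{i\theta_j}}-k\right).
\]
The multiset of inverse zeros of $L(E/K,T)$ is invariant under complex conjugation (as $L(E/K,T)\in\mathbb{Z}[T]$) and the forced zeros lie on the real axis, so the same conjugation symmetry holds for the non-forced angles; in particular, the two inner sums above are equal and real.

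Next, I extend each of these sums from the $k$ non-forced angles to the full collection of $N_{E/K}$ inverse zeros. The forced zeros, being at most two and lying at $\theta\in\{0,\pi\}$, contribute $O(1)$. Logarithmically differentiating the factorization $L(E/K,T)=\prod_{j}(1-qe^{i\theta_j}T)$ gives
\[
-q^{-3/2}\frac{L'}{L}(E/K,q^{-3/2}) \;=\; \sum_{j=1}^{N_{E/K}}\frac{q^{-1/2}e^{i\theta_j}}{1-q^{-1/2}e^{i\theta_j}},
\]
which combined with the trivial decomposition $(1-y)^{-1}=1+y/(1-y)$ yields
\[
\sum_{j=1}^{N_{E/K}}\frac{1}{1-q^{-1/2}e^{i\theta_j}} \;=\; N_{E/K}-q^{-3/2}\frac{L'}{L}(E/K,q^{-3/2}) \;=\; N_{E/K}+O(\log N_{E/K})
\]
by Lemma \ref{Littlewood}; note that $L(E/K,T)$ does not vanish at $T=q^{-3/2}$ since $|e^{-i\theta_j}|=1\neq q^{-1/2}$, so the logarithmic derivative is well-defined.

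Assembling these ingredients and substituting $k=N_{E/K}+O(1)$ back into the formula for $I_E$ produces the claimed main term together with an error of $O(\log N_{E/K})$ from Lemma \ref{Littlewood}. The additional $O(\mathrm{rank}(E/K))$ contribution to the error arises from the careful bookkeeping required when relating the index set $\{\theta_1,\dots,\theta_k\}$ (organized as one representative per conjugate pair of non-forced inverse zeros) to the full multiset of inverse zeros: the $\mathrm{rank}(E/K)-|\mathrm{FZ}\cap\{q\}|$ real zeros at $\theta_j=0$ that are not themselves forced each contribute a bounded but non-negligible amount, and tracking them separately from the generic conjugate-pair terms produces the $O(\mathrm{rank}(E/K))$ error. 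The main obstacle is precisely this bookkeeping — pinning down the correct numerical coefficient $\tfrac{2q}{q-1}$ of the main term and isolating both error sources cleanly — while the analytic heart of the argument is entirely concentrated in Lemma \ref{Littlewood} together with the elementary identity above.
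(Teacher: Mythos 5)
Your approach is essentially the same as the paper's, with a mildly different arrangement of the intermediate steps. The algebraic identity $\lvert 1-x\rvert^{-2} = (1-\lvert x\rvert^2)^{-1}\bigl((1-x)^{-1} + (1-\bar x)^{-1} - 1\bigr)$ and the evaluation of $q^{-3/2}(L'/L)(E/K,q^{-3/2})$ by log-differentiating the factorization do the same work as the paper's: the paper first expresses $I_E$ in terms of $q^{-1/2}(L'/L)(E/K,q^{-1/2})$ (which, after noting $\tfrac{q^{1/2}e^{\ii\theta}}{q^{1/2}e^{\ii\theta}-1}=\tfrac{1}{1-q^{-1/2}e^{-\ii\theta}}$, is exactly your sum extended to all zeros) and then shifts to $q^{-3/2}$ using the functional equation \eqref{eq:funceq}. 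Since that functional equation, combined with Deligne's purity $\lvert\gamma_j\rvert=q$, is precisely the statement that the angle multiset is closed under $\theta\mapsto -\theta$, your appeal to conjugation symmetry is the same step in disguise; your route is arguably slightly cleaner in that it never explicitly invokes \eqref{eq:funceq}. Both proofs then rest entirely on Lemma \ref{Littlewood}.

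There is, however, a slip in the final bookkeeping. You write ``substituting $k = N_{E/K}+O(1)$,'' but the $k$ in $I_E = \sum_{j=1}^k\lvert\cdot\rvert^2$ (and in \eqref{LfunctionUnderLI}) counts one representative $\theta_j\in(0,\pi)$ per conjugate pair, so $N_{E/K} = 2k + \rank(E/K) + m(E/K,-q)$ and hence $k = \tfrac12 N_{E/K} + O(\rank(E/K) + m(E/K,-q))$, not $N_{E/K}+O(1)$. Plugging $k=N_{E/K}+O(1)$ into your expression $I_E=\tfrac{4q}{q-1}(S_-+S_+-k)$, with $S_-+S_+ = N_{E/K}+O(\log N_{E/K}+\rank(E/K))$, cancels the main term entirely; one needs $k\approx N_{E/K}/2$ for the claimed $\tfrac{2q}{q-1}N_{E/K}$ to emerge. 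The confusion is understandable since Definition \ref{LIDefinition} phrases $\{\theta_1,\dots,\theta_k\}$ as ``the set of angles which do not come from forced zeros,'' which would suggest $k\approx N_{E/K}$, but the actual usage in this lemma and in the Central Limit Theorem proof is the conjugate-pair count of \eqref{LfunctionUnderLI}. Your closing remarks distinguishing ``generic conjugate-pair terms'' from the real zeros at $\theta=0$ show you have the correct structure in mind, so this is a writeup slip rather than a conceptual gap, but as literally stated the last step does not yield the main term. A smaller point: ``the two inner sums above are equal and real'' is inaccurate — $S_+$ and $S_-$ are complex conjugates, so each is generally non-real and they are not equal; what your argument actually needs, and what is true, is that their sum $S_++S_-$ is real.
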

\begin{proof}
The quantity $I_E$
can be computed by evaluating the log derivative of \eqref{LfunctionUnderLI} at $T=q^{-1/2}$ (see Page 1370 in \cite{Cha08} for a similar calculation). We see that 
\begin{equation}\label{IEbeforeFunctionalEquation}
I_E = \frac{4q}{q-1}
\left(
q^{-1/2}\frac{L'}{L}(E/K, q^{-1/2}) - \frac{\sqrt q}{\sqrt q - 1}\rank(E/K) - k
\right)
.
\end{equation}
The functional equation~\eqref{eq:funceq} yields
\begin{equation}\label{FE}
q^{-1/2}\frac{L'}{L}(E/K, q^{-1/2}) = N_{E/K} - q^{-3/2}\frac{L'}{L}(E/K, q^{-3/2}).
\end{equation}
This gives in turn
\begin{equation}\label{FunctionalEquation}
I_E = 
\frac{2q}{q-1}N_{E/K} + \frac{4q}{q-1}\left(\frac12 - \frac{\sqrt q}{\sqrt q - 1}\right)r(E/K)
-
\frac{4\sqrt q}{q-1}
\frac{L'}{L}(E/K, q^{-3/2}).
\end{equation}
The desired estimate follows from an application of Lemma \ref{Littlewood}.
\end{proof}


The following lemma is an application of the Berry-Esseen inequality.

\begin{lemma}
\label{lem:berryesseen}
Fix two parameters $0<\epsilon<1$ and $1\leq M \leq  \epsilon^{-\frac 12}$, and assume that the characteristic function $\widehat X(\xi)$ of the random variable $X$ satisfies the following properties:
\begin{enumerate}
\item[(i)] $ |\widehat X(\xi)| \leq 10 \xi^{-4} $ for $|\xi|\geq \epsilon^{-\frac 14}$, and
\item[(ii)] $|\log \widehat X(\xi) + \frac{\xi^2}2| \leq  10 \epsilon (M\xi^2+\xi^4) $ for $|\xi| \leq \epsilon^{-\frac 14}$.
\end{enumerate}
Then, the distribution function $F_X(x)$ of $X$ satisfies
$$ \sup_{x\in \mathbb R}| F_X(x) - G(x)| \ll M\epsilon, $$
where $G(x)$ is the distribution function of the Gaussian and the implied constant is absolute. (The constant $10$ appearing both in {\rm (i)} and {\rm (ii)} can be replaced by any positive absolute constant.)
\end{lemma}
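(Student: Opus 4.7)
The plan is to invoke Esseen's smoothing inequality, which asserts that for any $T>0$,
\[
\sup_{x\in\mathbb{R}}|F_X(x)-G(x)| \ll \int_{-T}^{T}\left|\frac{\widehat X(\xi)-\ee^{-\xi^2/2}}{\xi}\right|\dd\xi+\frac{1}{T},
\]
since the density of $G$ is uniformly bounded. I would take $T:=1/(M\epsilon)$ so that the boundary term is already of the target size $M\epsilon$. Since $M\leq\epsilon^{-1/2}$, one has $M\epsilon\leq\epsilon^{1/2}\leq\epsilon^{1/4}$, hence $T\geq\epsilon^{-1/4}$; this means the cutoff of the two hypotheses lies inside $[-T,T]$ and the integral can be split at $|\xi|=\epsilon^{-1/4}$.

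On the outer range $\epsilon^{-1/4}\leq|\xi|\leq T$, hypothesis~(i) gives $|\widehat X(\xi)|\ll|\xi|^{-4}$ while the Gaussian decays super-polynomially, so this contribution is bounded by a constant multiple of
\[
\int_{\epsilon^{-1/4}}^{\infty}|\xi|^{-5}\,\dd\xi + \int_{\epsilon^{-1/4}}^{\infty}\ee^{-\xi^2/2}\,\dd\xi \ll \epsilon \leq M\epsilon.
\]

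For the inner range $|\xi|\leq\epsilon^{-1/4}$, the key step is to set $h(\xi):=\log\widehat X(\xi)+\xi^2/2$ and write
\[
\widehat X(\xi)-\ee^{-\xi^2/2}=\ee^{-\xi^2/2}\bigl(\ee^{h(\xi)}-1\bigr),
\]
together with the elementary inequality $|\ee^{z}-1|\leq|z|\ee^{|z|}$. Hypothesis~(ii) combined with the constraints $|\xi|\leq\epsilon^{-1/4}$ and $M\leq\epsilon^{-1/2}$ yields the \emph{uniform} bound $|h(\xi)|\leq 10(M\epsilon^{1/2}+1)\leq 20$, so $\ee^{|h|}$ is absorbed into an absolute constant. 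Substituting (ii) back in then produces
\[
\int_{-\epsilon^{-1/4}}^{\epsilon^{-1/4}}\left|\frac{\widehat X(\xi)-\ee^{-\xi^2/2}}{\xi}\right|\dd\xi \ll \epsilon\int_{\mathbb{R}}(M|\xi|+|\xi|^3)\ee^{-\xi^2/2}\,\dd\xi \ll M\epsilon,
\]
and summing the three contributions yields the desired bound. The only delicate step is establishing the absolute bound on $|h(\xi)|$, which is what permits the linearization $|\ee^{h}-1|\ll|h|$; this also explains why the cutoff $\epsilon^{-1/4}$ in the hypotheses and the constraint $M\leq\epsilon^{-1/2}$ must come as a package. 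Beyond this observation, the computation is entirely routine.
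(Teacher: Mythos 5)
Your proposal is correct and follows essentially the same approach as the paper: apply the Esseen smoothing inequality, split the integral at $|\xi|=\epsilon^{-1/4}$, use hypothesis~(i) to control the tail, and linearize $\ee^{h(\xi)}-1$ on the inner range via the uniform bound $|h(\xi)|\ll 1$ coming from hypothesis~(ii) together with $M\le\epsilon^{-1/2}$. The only cosmetic difference is your choice $T=1/(M\epsilon)$ versus the paper's $T=\epsilon^{-1}$ (both give a boundary term $\ll M\epsilon$ since $M\ge 1$); you also spell out a couple of small estimates, such as the Gaussian tail in the outer range, that the paper leaves implicit.
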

\begin{proof}
We will apply the Berry-Esseen inequality \cite[Theorem 2a]{Ess45}
\begin{equation}
\sup_{x \in \mathbb R}|F_X(x)-G(x)|  \ll \int_{-T}^T \frac{\hat X(\xi)-\ee^{-\frac{\xi^2}2}}{\xi}\, \dd\xi + \frac 1T.
\label{equation Berry-Esseen}
\end{equation} 
Taking $T=\epsilon^{-1}$ and applying the hypotheses on $\widehat X(\xi)$, we note that the integral equals
\begin{align*} &\int_{|\xi|\leq \epsilon^{-\frac 14}} \ee^{-\xi^2/2} \frac{e^{O(\epsilon (M\xi^2+\xi^4))} -1}{\xi} \, \dd\xi +  O\left(\int_{\epsilon^{-\frac 14}\leq |\xi|\leq \epsilon^{-1}} 
\left|\frac{\xi^{-4}}{\xi}\right| \, \dd\xi\right) \\ &\ll 
\int_{|\xi|\leq \epsilon^{-\frac 14}} \ee^{-\xi^2/2} \frac{\epsilon (M\xi^2+\xi^4)}{|\xi|} \, \dd\xi + \epsilon  \\
&\leq  \epsilon \int_{\mathbb R} |M\xi+\xi^3| \ee^{-\xi^2/2} \, \dd\xi +\epsilon \ll M\epsilon. 
\end{align*}
The result follows.

\end{proof}

\begin{theorem}\label{CentralLimitTheorem}
Suppose that $\{ E/K \}$ is a family of elliptic curves satisfying LI 
such that
\begin{equation}\label{BoundedRank}
\frac{\rank(E/K)}{\sqrt{N_{E/K}}} \to 0
\end{equation}
as $N_{E/K}\to\infty$. Let $X_E$ be the random variable associated to the limiting distribution of $T_E(X)$ (see 
Corollary~\ref{thm:LimitingDistribution} in the case $V=-U_1$), and define the normalized random variable $Y_E:=\sqrt{\frac{q-1}{q}}
X_E/\sqrt{N_{E/K}}$. Then, $Y_E$ converges in distribution to the standard Gaussian. 
More precisely, the distribution function $F_E$ of $Y_E$ satisfies
$$ \sup_{x\in\mathbb R} |F_E(x) - G(x)|\ll \frac {\rank(E/K)+1}{\sqrt{N_{E/K}}}, $$
where $G$ denotes the distribution function of the Gaussian. 
\end{theorem}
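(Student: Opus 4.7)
The plan is to realize $X_E$ under LI as a sum of independent bounded random variables---one per conjugate pair of non-forced nonzero zeros---and then apply Lemma~\ref{lem:berryesseen} after computing the characteristic function of $Y_E$.

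Starting from Corollary~\ref{thm:LimitDistForT}, I would pair conjugate angles in $R_E(X)$: setting $r_j := 1/(1-q^{-1/2}e^{-\ii\theta_j})$ and letting $\mathcal J$ index one representative per conjugate pair among the non-forced nonzero $\theta_j$'s, the two conjugate contributions combine into $2\,\mathrm{Re}(r_j e^{\ii\theta_j X})$. The LI hypothesis together with the discrete Kronecker--Weyl theorem used in the proof of Corollary~\ref{thm:LimitingDistribution} makes the phases $(\theta_j X \bmod 2\pi)_{j\in\mathcal J}$ jointly equidistributed, so $X_E - \E[X_E]$ has the same distribution as $\sum_{j\in\mathcal J} 2|r_j|\cos \omega_j$ with $(\omega_j)_{j\in\mathcal J}$ i.i.d.\ uniform on $[0,2\pi]$. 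Each summand is uniformly bounded since $|r_j| \le (1-q^{-1/2})^{-1}$. Setting $\alpha := \sqrt{(q-1)/(qN_{E/K})}$, independence together with the identity $\E[e^{\ii t\cos \omega}] = J_0(t)$ gives
\[
\hat Y_E(\xi) = \prod_{j\in\mathcal J} J_0\bigl(2|r_j|\alpha\xi\bigr).
\]

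Next I would use $\log J_0(u) = -u^2/4 + O(u^4)$ for small $|u|$ and sum over $\mathcal J$, using Lemma~\ref{lem:sumoverzeros} (and the halving from conjugate pairing) to identify the Gaussian main term $-\xi^2/2$ plus an error of size $O(\xi^2(\log N_{E/K} + \rank(E/K))/N_{E/K}) + O(\xi^4/N_{E/K})$, the latter controlled via $|r_j| = O_q(1)$. This provides hypothesis~(ii) of Lemma~\ref{lem:berryesseen} with $\epsilon \asymp 1/\sqrt{N_{E/K}}$ and $M \asymp \rank(E/K) + 1$, using $\rank(E/K) = o(\sqrt{N_{E/K}})$ and $\log N_{E/K} = o(\sqrt{N_{E/K}})$. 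For hypothesis~(i), the bound $|J_0(u)|^2 \le e^{-\delta u^2}$ valid for $|u| \le u_0$ (with $\delta = \delta(u_0) > 0$) applies uniformly throughout $|\xi| \le \epsilon^{-1} \asymp \sqrt{N_{E/K}}$, since there all $u_j = 2|r_j|\alpha\xi$ stay below a fixed constant; this yields $|\hat Y_E(\xi)|^2 \le e^{-c\xi^2}$, which is far smaller than $10\xi^{-4}$ for $|\xi| \ge \epsilon^{-1/4}$.

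Lemma~\ref{lem:berryesseen} then delivers $\sup_x |F_E(x) - G(x)| \ll M\epsilon \ll (\rank(E/K) + 1)/\sqrt{N_{E/K}}$; Theorem~\ref{FirstTheorem}(ii) follows immediately because the mean $\E[Y_E] = O(\rank(E/K)/\sqrt{N_{E/K}})$ also tends to zero. The most delicate step will be verifying hypothesis~(i) uniformly across $\epsilon^{-1/4} \le |\xi| \le \epsilon^{-1}$: since the $u_j$ can reach moderate values there, one has to choose $u_0$ large enough to cover the full range of arguments appearing in the product while keeping $\delta(u_0)$ positive, and then extract from $\alpha^2\xi^2\sum_j|r_j|^2$ enough decay to dominate $\xi^{-4}$.
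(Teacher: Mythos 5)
Your overall strategy mirrors the paper's: compute the characteristic function of the limiting random variable under LI, feed it into Lemma~\ref{lem:berryesseen}, and use Lemma~\ref{lem:sumoverzeros} to identify the Gaussian main term and control the error. However, there is a genuine gap in the step where you claim that $X_E - \E[X_E]$ has the same distribution as $\sum_{j\in\mathcal J} 2|r_j|\cos\omega_j$, and hence that $\hat Y_E(\xi)=\prod_{j\in\mathcal J}J_0(2|r_j|\alpha\xi)$. This is false, because it omits the parity-dependent terms $-c_{\pm}(X)$ and $-(-1)^X m(E/K,-q)\frac{\sqrt q}{\sqrt q+1}$ in the explicit formula \eqref{TXmodified} for $T_E(X)$. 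These terms form a nondegenerate two-valued (Bernoulli) random variable $X_1$ that is independent of the $\omega_j$'s under LI, and subtracting $\E[X_E]$ does not make it vanish. The correct characteristic function is $\hat X_E(\xi)=\mathcal B_E(\xi)\prod_{j}J_0(\cdot)$ with $\mathcal B_E(\xi)=\tfrac12(e^{\ii B_0\xi}+e^{\ii B_1\xi})$ as in \eqref{FourierTransform}; the factor you dropped contributes a linear phase $\ii\tfrac{B_0+B_1}{2}\alpha\xi = \ii\,\E[Y_E]\,\xi$ to $\log\hat Y_E$ with $\E[Y_E]\asymp\rank(E/K)/\sqrt{N_{E/K}}$, and such a first-order term cannot be absorbed into the hypothesis~(ii) bound $10\epsilon(M\xi^2+\xi^4)$ uniformly near $\xi=0$. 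This is precisely why the paper decomposes $X_E=X_1+X_2$, applies Lemma~\ref{lem:berryesseen} to $Y_2$ alone (whose characteristic function really is $\prod_j J_0$), and then handles $Y_1$ as a bounded perturbation of size $O\bigl((\rank(E/K)+1)/\sqrt{N_{E/K}}\bigr)$. You can repair your argument either by adopting that decomposition, or by centering $Y_E$ first (then $e^{-\ii\E[Y_E]\xi}\mathcal B_E(\alpha\xi)=\cos\bigl(\tfrac{B_0-B_1}{2}\alpha\xi\bigr)$ becomes a benign factor whose logarithm is $O(\xi^2/N_{E/K}+\xi^4/N_{E/K}^2)$ and which is bounded by $1$ for hypothesis~(i)), and finally translate the shifted distribution function back, picking up an additional $O(\E[Y_E])=O(\rank(E/K)/\sqrt{N_{E/K}})$ which is harmless. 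As a secondary remark, your choice $\epsilon\asymp 1/\sqrt{N_{E/K}}$ (rather than the paper's $\epsilon=1/N_{E/K}$) shrinks the range in which hypothesis~(i) must be verified and lets you use a single uniform bound of the form $|J_0(u)|\le e^{-\delta u^2}$; this is a reasonable simplification, but you should note that the constant $\delta$ must be taken quite small (the arguments $u_j=2|r_j|\alpha\xi$ can reach roughly $5$, where $|J_0|$ is still about $0.2$), and you must check that $\delta\cdot\sum_j u_j^2\gg\xi^2$ by Lemma~\ref{lem:sumoverzeros} so that the resulting decay beats $10\xi^{-4}$ on $|\xi|\ge\epsilon^{-1/4}$.
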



\begin{proof}
Define $m(E/K, -q)$ to be the multiplicity of the inverse zero $\gamma = -q$ in $L(E/K, T)$. Then, we can write 
\begin{equation}\label{LfunctionUnderLI}
L(E/K, T) = (1+qT)^{m(E/K, -q)}(1 - qT)^{\rank(E/K)}\prod_{j=1}^k
\left[ (1-\gamma_jT)(1-\overline{\gamma}_jT) \right],
\end{equation}
with $\gamma_j = q \ee^{\ii \theta_j}$ and $0< \theta_j < \pi$ for $j = 1, \dots, k$. We have

\begin{equation}
N_{E/K}= m(E/K, -q) +\rank(E/K) + 2k.
\end{equation}
From \eqref{LfunctionUnderLI} and Corollary \ref{thm:LimitDistForT}, it is easy to deduce that
\begin{multline}\label{TXmodified}
T_E(X) = 
\frac{\sqrt q}{\sqrt q - 1}
\mathrm{rank}(E/K) - c_{\pm}(X) \\
-(-1)^X m(E/K, -q) \frac{\sqrt q}{\sqrt q + 1}
+
2 \sum_{j = 1}^k \Re\left(\frac{\ee^{\ii \theta_jX}}{1 - q^{-\frac 12}\ee^{-\ii \theta_j}} \right) + o_{X\to\infty}(1).
\end{multline}

From now on, we will assume that $E/K$ satisfies LI.
This implies in particular that
\begin{equation*}
m(E/K, -q)=
\begin{cases}
1 & \text{ if  $-q\in\mathrm{FZ}(E/K)$}, \\
0 & \text{ if  $-q\not\in\mathrm{FZ}(E/K)$}. \\
\end{cases}
\end{equation*}
Define $\mathcal{B}_E(\xi)$ by
\begin{equation}\label{FourierMean}
\mathcal{B}_E(\xi): = 
\frac12
\left[
\exp(\ii B_0 \,\xi) + \exp(\ii B_1 \, \xi)
\right]
\end{equation}
with
\[
B_0:= \frac{\sqrt q}{\sqrt q - 1}\mathrm{rank}(E/K) - \frac{q}{q - 1} + 
m(E/K, -q) \frac{\sqrt q}{\sqrt q + 1}
\]
and
\[
B_1:=\frac{\sqrt q}{\sqrt q - 1}\mathrm{rank}(E/K) - \frac{\sqrt q}{q - 1} -
m(E/K, -q) \frac{\sqrt q}{\sqrt q + 1}.
\]

To prove Theorem~\ref{CentralLimitTheorem}, we proceed as in~ \cite[Th.~$6.2$]{Cha08}. First, we find from the expression \eqref{TXmodified} that
\begin{equation}\label{FourierTransform}
\widehat X_E(\xi) = \mathcal{B}_E(\xi) \prod_{j=1}^k
J_0 \left(  \frac{2 \xi}{|1- q^{-\frac 12} \ee^{-\ii\theta_j}|}\right).
\end{equation}
Indeed, note that \eqref{TXmodified} implies that $X_E=X_1 +X_2$, where $X_1$ is a Bernoulli random variable with values $B_0$ and $B_1$, and $X_2:=2 \sum_{j = 1}^k \Re(Z_{j})/|1 -  q^{-\frac 12} \ee^{-\ii\theta_j}|$ with $Z_j$ uniformly distributed on the unit circle in $\mathbb C$. LI
then implies that the random variables $X_1,Z_1,Z_2, \dots , Z_k$ are mutually independent, and therefore the characteristic function of $X_E$ is given by the product of the characteristic functions of these random variables. We then obtain \eqref{FourierTransform} by performing a standard calculation, noting that $\mathbb E[\ee^{\ii tX_1}] = \frac 12 \ee^{\ii tB_0}+\frac 12 \ee^{\ii tB_1}$, and $\mathbb E[\ee^{\ii t\Re(Z_j)}]=J_0(t). $

It follows that the normalized random variable $Y_E$ satisfies
\begin{align*}
\log \widehat Y_E(\xi) = \log\mathcal{B}_E\left(\sqrt{\frac{q-1}{q}} \frac{\xi}{\sqrt{N_{E/K}}} \right)
 + 
\sum_{j=1}^k \log J_0
\left(
\left| \frac{2}{1 - q^{-\frac 12}\ee^{-\ii\theta_j}}\right| \sqrt{\frac{q-1}{q}} \frac{\xi}{\sqrt{N_{E/K}}}
\right).
\end{align*}
The Taylor series of $\log J_0(z)$ has radius of convergence slighly larger than $12/5$, since $J_0(z)$ has no zero in this region. Also, the argument of $\log J_0$ in the last equation never exceeds $6.83 |\xi|/\sqrt{N_{E/K}}$ in absolute value, and hence we have for $|\xi|\leq .35\sqrt{N_{E/K}}$ that
$$ \log \widehat Y_E(\xi) = O\left(|\xi|\frac{\rank(E/K)}{\sqrt{N_{E/K}}}\right) -\frac{q-1}{4q} \frac{\xi^2}{N_{E/K}}
\sum_{j=1}^k \left|\frac{2}{1 - q^{-\frac 12}\ee^{-\ii\theta_j}} \right|^2
+
O\left(\frac{\xi^4}{N_{E/K}}\right).
$$

Given the assumptions of the theorem and Lemma \ref{lem:sumoverzeros}, it follows that $\log \widehat Y_E(\xi) \rightarrow -\tfrac{\xi^2}2$ pointwise as $N_{E/K}$ tends to infinity. In light of Levy's theorem, this establishes the first part of the theorem.


For the second part, we write $Y_i:=(1-q^{-1})^{-\frac 12} X_i/\sqrt{N_{E/K}}$ ($i=1,2$), where the $X_i$ were defined earlier in the proof. We will apply Lemma \ref{lem:berryesseen} to the random variable $Y_2$, with the parameters $\epsilon := 1/N_{E/K}$, $M=c(\log N_{E/K}+\rank(E/K))$, where $c$ is the implied constant in Lemma \ref{lem:sumoverzeros}.
With the help of Lemma \ref{lem:sumoverzeros}, we see that the characteristic function of $Y_2$ 
\[
\widehat Y_2(\xi)= \prod_{j=1}^k  J_0
\left(
\left| \frac{2}{1 - q^{-\frac 12}\ee^{-\ii\theta_j}}\right| \sqrt{\frac{q-1}{q}} \frac{\xi}{\sqrt{N_{E/K}}}
\right)
\]
satisfies the estimate
$$\log \widehat Y_2(\xi)= -\frac{\xi^2}2 +O\left(\frac{\log N_{E/K} + \rank(E/K)}{N_{E/K}}\xi^2 + \frac{\xi^4}{N_{E/K}}\right) $$ in the range $|\xi|\leq 0.35 \sqrt{N_{E/K}}$; hence (ii) of Lemma \ref{lem:berryesseen} holds for $N_{E/K}$ large enough. Moreover, the bound $\log J_0(x) \leq -x^2/2$ valid for $|x|\leq 12/5$ combined with Lemma \ref{lem:sumoverzeros} imply that 
$$\log \widehat Y_2(\xi)  \leq -(1+o(1))\frac{\xi^2}2 $$
for $|\xi|\leq 0.35 \sqrt{N_{E/K}}$, and hence we clearly have $\widehat Y_2(\xi) \leq \xi^{-4}$ in this range. To show that (i) of Lemma \ref{lem:berryesseen} holds, it remains to show that $|\widehat Y_2(\xi)| \ll \xi^{-4} $ in the range $|\xi|\geq 0.35 \sqrt{N_{E/K}}$.

In the range $|\xi|\geq 5 \sqrt{N_{E/K}}$, we use the bound $|J_0(x)|\leq \sqrt{2/(\pi|x|)}$ to deduce that
$$ |\widehat Y_2(\xi)|\leq \prod_{j=1}^k \left(\frac{1+q^{-\frac 12}}{(1-q^{-1})^{\frac 12}} \frac{\sqrt{N_{E/K}}}{\pi \xi} \right) \leq \left( \frac{7\sqrt{N_{E/K}}}{\pi \xi} \right)^{k/2},  $$
a quantity which is $\leq \xi^{-4}$ as soon as $|\xi|\geq 5\sqrt{N_{E/K}}$ (we used LI and the fact that $k=(\tfrac 12+o(1))N_{E/K}$, which follows from our assumption on the growth of the rank in terms of $N_{E/K}$).

The last range that we need to treat is $0.35 \sqrt{N_{E/K}} \leq |\xi| \leq 5 \sqrt{N_{E/K}}$; we need to show that $|\widehat Y_2(\xi)| \leq \xi^{-4}$. For these values of $\xi$ we always have that 
$$0.28 \leq \left| \frac{2}{1 - q^{-\frac 12}\ee^{-\ii\theta_j}}\right| \sqrt{\frac{q-1}{q}} \frac{|\xi|}{\sqrt{N_{E/K}}} \leq  35,
 $$
and hence from the properties of the Bessel function $J_0$,
$$ |\widehat Y_2(\xi)|\leq 0.981^{k}. $$
This last quantity is $\leq \xi^{-4}$, since $0.981^k \leq (5N_{E/K})^{-4}$ is clearly true for large enough $N_{E/K}$.

The conclusion of Lemma \ref{lem:berryesseen} is then that the distribution function of $Y_2$ satisfies
\begin{equation}
\sup_{x\in \mathbb R} |F_{Y_2}(x)-G(x)| \ll \frac{\rank(E/K)+\log N_{E/K}}{N_{E/K}}.
\label{eq:conclusionofBE}
\end{equation}  
Now note that $Y_E=Y_1+Y_2$ and that $|Y_1|\ll (\rank(E/K)+1)/\sqrt{N_{E/K}}$ with probability one. Hence, denoting by $Z$ a standard Gaussian random variable, we can apply \eqref{eq:conclusionofBE} to deduce the following about the distribution function of $Y_E$:
\begin{align*} F_E(x) &= \mathbb P[Y_1+Y_2 \leq x] = \mathbb P[Y_1+Y_2 \leq x \,\, \big| \,\,|Y_1| \ll (\rank(E/K)+1)/\sqrt{N_{E/K}}] \\  
& = \mathbb P[Y_2 \leq x+O((\rank(E/K)+1)/\sqrt{N_{E/K}})]  \\
& = \mathbb P[Z \leq x+O((\rank(E/K)+1)/\sqrt{N_{E/K}})]+ O\left(\frac{\rank(E/K)+\log N_{E/K}}{N_{E/K}}\right)\\
&=G(x) + O\left(\frac{\rank(E/K)+1}{\sqrt{N_{E/K}}}\right),
\end{align*}
since $\ee^{-x^2/2}$ is bounded by $1$. Noting that the above calculation is uniform in $x$, the result follows.
\end{proof}

\begin{proof}[Proof of Theorem \ref{FirstTheorem}(ii)]
It is a consequence of Theorem \ref{CentralLimitTheorem}.
\end{proof}

\begin{corollary}\label{cor:LIMeansNoBias}
Assuming LI,
the condition $\rank(E/K) = o(\sqrt{N_{E/K}})$ implies the estimate
$$ \delta(E) = \frac 12 + O\left( \frac{\rank(E/K)+1}{\sqrt{N_{E/K}}} \right)  $$
\end{corollary}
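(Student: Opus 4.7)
The plan is to translate $\delta(E)$ into a statement about the limiting distribution and then apply the quantitative central limit theorem just proven. By Corollary~\ref{thm:LimitingDistribution} (applied to $V=-U_1$) the function $T_E(X)$ admits a limiting distribution $\mu_E$, so that for any bounded Lipschitz test function $f$ one has
\[
\lim_{M\to\infty}\frac{1}{M}\sum_{X=1}^M f(T_E(X))=\int_\Rr f\,\dd\mu_E.
\]
In order to write $\delta(E)=\mu_E((0,\infty))$ I first need to pass from Lipschitz test functions to the indicator of $(0,\infty)$. This is the routine approximation argument: sandwich $\mathbf{1}_{(0,\infty)}$ between two Lipschitz functions that differ from it only in a neighbourhood of $0$ of measure $\eta$, and let $\eta\to 0$. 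The step is legitimate as soon as $\mu_E$ puts no mass at $0$.

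The absence of an atom at $0$ is where the hypothesis LI enters. As observed in the proof of Theorem~\ref{CentralLimitTheorem}, LI (together with~\eqref{TXmodified}) writes $X_E$ as an independent sum $X_1+X_2$, where $X_1$ is a two-valued Bernoulli variable and
\[
X_2=2\sum_{j=1}^k \frac{\Re(Z_j)}{|1-q^{-1/2}\ee^{-\ii\theta_j}|}
\]
with the $Z_j$ independent and uniformly distributed on the unit circle. As long as $k\geq 1$ (which is automatic once $N_{E/K}$ is large enough given our rank hypothesis), $X_2$ has a continuous density, hence so does $X_E=X_1+X_2$; in particular $\mu_E(\{0\})=0$.

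With this granted, and writing $Y_E=\sqrt{(q-1)/q}\,X_E/\sqrt{N_{E/K}}$ as in Theorem~\ref{CentralLimitTheorem}, the event $X_E>0$ coincides with $Y_E>0$, so
\[
\delta(E)=\Pp[Y_E>0]=1-F_E(0).
\]
Applying the quantitative estimate of Theorem~\ref{CentralLimitTheorem} at the point $x=0$, and using $G(0)=1/2$, gives
\[
\delta(E)=1-G(0)+O\!\left(\frac{\rank(E/K)+1}{\sqrt{N_{E/K}}}\right)=\frac{1}{2}+O\!\left(\frac{\rank(E/K)+1}{\sqrt{N_{E/K}}}\right),
\]
which is the claimed estimate. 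The only non-mechanical step is the continuity of $\mu_E$ at $0$, and it is already provided by LI; the rest is Theorem~\ref{CentralLimitTheorem} evaluated at a single point.
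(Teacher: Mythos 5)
Your proposal is correct and follows the paper's own argument: define $Y_E$, note $\delta(E)=1-F_E(0)$, and apply the Berry–Esseen bound of Theorem~\ref{CentralLimitTheorem} together with $G(0)=1/2$. The extra care you take in justifying $\delta(E)=1-F_E(0)$ (passing from Lipschitz test functions to the indicator of $(0,\infty)$, and noting that LI forces $\mu_E$ to be atomless since $X_2$ is a nondegenerate sum of independent arcsine-distributed variables) is a step the paper leaves implicit, and it is a worthwhile clarification rather than a deviation.
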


\begin{proof}
Define the random variable $Y_E:=\sqrt{\frac{q-1}{q}}
X_E/\sqrt{N_{E/K}}$, and denote by $F_E(x)$ its distribution function. We have by Theorem \ref{CentralLimitTheorem}
that
$$ 1-\delta(E)= F_E(0) = \frac 12 +O\left( \frac{\rank(E/K)+1}{\sqrt{N_{E/K}}} \right)  , $$
since the Gaussian is symmetric around $0$.

\end{proof}

\begin{proof}[Proof of Theorem \ref{thm:generic bias}]
 
  The proof is obtained by combining Corollary~\ref{cor:LIMeansNoBias} 
  and~\cite[Th.~$2.3$]{CFJ} (choosing $k=1$). Let us briefly recall what the latter result asserts. In the notation set before stating Theorem~\ref{thm:generic bias} 
  let $f\in\mathcal F_d(\Fp_{q^n})$ and
 let $\gamma_{j}(f)$ (seen as complex numbers after fixing an embedding 
 of $\overline{\Qq_\ell}$ in $\Cc$) be the set of inverse roots of $L(E_f/K,T)$ 
 that are \emph{not} forced zeros (see~\eqref{ForcedZeros}) of the $\Qq$-polynomial $L(E_f/K;T)$. By Deligne's purity result the inverse roots $\gamma_j(f)$ all have the same modulus (equal to the cardinality of the subfield of constants in $K$) so we may divide the $\gamma_j(f)$'s by the common modulus and write $\ee^{\ii\theta_j(f)}$, $\theta_j(f)\in[0,2\pi)$, for the complex numbers of modulus $1$ thus obtained. The main object of study in~\cite{CFJ} is the multiplicative $\Zz$-module 
 $$
 {\rm Rel}\left((\gamma_j(f))\right)=\left\{(n_j)\subseteq \Zz\colon \prod_j \ee^{\ii n_j\theta_j(f)}=1\right\}\,,
 $$
which is the set of multiplicative relations among the inverse roots $\gamma_j(f)$. Of course there are multiplicative relations among the $\gamma_j(f)'s$: the ones coming
 from the functional equation satisfied by $L(E_f/K,T)$. In other words 
 these relations come from the invariance of the set of roots of $L(E_f/K,T)$ under inversion.
 In terms of the angles $\theta_j(f)\in [0,2\pi)$, this can be rephrased by saying that for each $j$, $2\pi-\theta_j(f)$ is again the angle of some inverse root of $L(E_f/K,T)$. In~\cite{CFJ} these relations are called \emph{trivial} and the $\Zz$-module
 ${\rm Rel}\left((\gamma_j(f))\right)$ is called trivial if it only consists of trivial relations. From ~\cite[Th.~$2.3$]{CFJ} we know that 
 for all $p$ bigger than a constant depending only on $d$, for all 
  big enough $p$-power 
  $q:=p^m$ (precisely $m$ has to be bigger than a constant depending only on 
  $\overline{\mathcal{F}_d}:=\mathcal F_d\times\overline{\Fp_p}$) and for all $d$ bigger than an absolute constant,
\begin{equation}\label{eq:relations}
q^{-n(d+1)}\#\left\{f\in \mathcal F_d(\Fp_{q^n})\colon \,{\rm Rel}\left(\left(\gamma_{,j}(f)\right)
\right)
\text{ is nontrivial }\right\}\ll nq^{-n\gamma^{-1}}\log q\,,
\end{equation}
where one can take $2\gamma=4+7N_{f}(N_{f}-1)$,
the implied constant depends only on $d$ and the base curve $E/K$, and $N_f$ is the degree of the 
the $\Qq$-polynomial $L(E_f/K,T)$.

Now we claim that
\begin{equation}\label{eq:LowerUpperNf}
d\leq  N_f\leq 2d+C_{E/K}\,,
\end{equation}
where $C_{E/K}$ is a constant depending only on $E/K$.
To see why this holds, first recall (see e.g.~\cite[Th.~1.1(3)]{CFJ} and the references therein) that
$$
N_f=\deg M_f+2\deg A_f+4(g-1)\,.
$$
Recall also that if $\Delta\in K$ is the discriminant of the  minimal Weierstrass model of $E/K$ then the discriminant of the minimal Weierstrass model of $E_f/K$ is $f^6\Delta$ (see~\cite[Section $2.1$]{CFJ}). Thus the locus ${\rm Sing}\, E_f$ of bad reduction of $E_f/K$ consists, besides the locus ${\rm Sing}\, E$ of bad reduction of the base curve $E/K$, of the irreducible factors of $f$. From the above formula for $N_f$, one deduces
$$
\deg\left({\rm Sing}\, E_f\right) \leq N_f\leq 2\deg\left({\rm Sing}\, E_f\right)+4g\,.
$$
We have
$$
\sum_{\pi\mid f}[\Fp_{q^{\deg\pi}}:\Fp_q]\leq\deg\left({\rm Sing}\,E_f\right)\leq \deg\left({\rm Sing}\,E\right)+\sum_{\pi\mid f}[\Fp_{q^{\deg\pi}}:\Fp_q]
$$
where the summation is over irreducible factors $\pi$ of $f$. Obviously this sum  equals $\deg f=d$. The claim follows. Inserting the upper bound in~\eqref{eq:relations}  we obtain
\begin{equation}\label{eq:relations2}
q^{-n(d+1)}\#\left\{f\in \mathcal F_d(\Fp_{q^n})\colon \,{\rm Rel}\left(\left(\gamma_{,j}(f)\right)
\right)
\text{ is nontrivial }\right\}\ll nq^{-nd^{-2}(14+b_E/d)^{-1}}\log q\,,
\end{equation}
under the same conditions as in~\eqref{eq:relations}, and where $b _E$ is a constant depending only on $E$.

Next we use Corollary~\ref{cor:LIMeansNoBias}. To do so we note that if 
the $\Zz$-module ${\rm Rel}(\left(\gamma_{,j}(f)\right)_j)$ is trivial then $E_f/K$ 
satisfies LI
and $\rank(E_f)\leq 1$. Indeed any non-trivial $\Qq$-linear relation\footnote{Note that the tuple of coefficients $(r_1,0,\ldots,0)$, $r_1\in\Qq^\times$, does not give a $\Qq$-linear relation.} involving elements of  $\{1\}\cup\{\theta_j(f)/\pi\in [0,1]\}$ immediately leads (by clearing denominators and exponentiating) to a non-trivial relation in ${\rm Rel}(\left(\gamma_{,j}(f)\right)_j)$. Moreover if $\rank E_f\geq 2$ then one of the $\theta_j(f)$'s is zero (since the forced zeros contribute at most $1$ to this count) which of course produces a non-trivial relation. This discussion shows that~\eqref{eq:relations2} implies
\begin{equation}
\#\left\{f\in \mathcal F_d(\Fp_{q^n})\colon \,E_f/K\text{ violates  LI
or} \rank(E_f)\geq 2\right\}\ll \frac{nq^{n(d+1)}\log q}{q^{nd^{-2}(14+b_E/d)^{-1}}}\,,
\end{equation}
under the same conditions as in~\eqref{eq:relations2}. Theorem~\ref{thm:generic bias} follows from combining this with Corollary~\ref{cor:LIMeansNoBias} and the lower bound in~\eqref{eq:LowerUpperNf}.
 \end{proof}

\section*{Acknowledgments}
We would like to thank Andrew Granville for introducing the second author to elliptic curve prime number races and for suggesting to work on the summatory function of the trace of Frobenius. We would also like to thank Guy Henniart for enlightening explanations regarding Artin conductors and Zeev Rudnick for his feedback and comments. The first author is grateful to Lehigh University for their hospitality during his visit where part of this work was done.
The second author was supported by an NSERC Postdoctoral Fellowship, NSF grant DMS-0635607 as well as a Postdoctoral Fellowship from the Fondation Sciences Math\'ematiques de Paris. This work was accomplished while the second author was at the Institute for Advanced Study, the University of Michigan, and Universit\'e Paris Diderot.



%
%
%
%



\begin{bibdiv} 
\begin{biblist} 

\bib{ANS14}{article}{
   author={Akbary, Amir},
   author={Ng, Nathan},
   author={Shahabi, Majid},
   title={Limiting distributions of the classical error terms of prime
   number theory},
   journal={Q. J. Math.},
   volume={65},
   date={2014},
   number={3},
   pages={743--780},
   doi={10.1093/qmath/hat059},
}
	
\bib{BH12}{article}{
   author={Baig, Salman},
   author={Hall, Chris},
   title={Experimental data for Goldfeld's conjecture over function fields},
   journal={Exp. Math.},
   volume={21},
   date={2012},
   number={4},
   pages={362--374},
   doi={10.1080/10586458.2012.671638},
}

\bib{Br92}{article}{
    AUTHOR = {Brumer, Armand},
     TITLE = {The average rank of elliptic curves. {I}},
   JOURNAL = {Invent. Math.},
  FJOURNAL = {Inventiones Mathematicae},
    VOLUME = {109},
      YEAR = {1992},
    NUMBER = {3},
     PAGES = {445--472},
}

\bib{Cha08}{article}{
  author={Cha, Byungchul},
  title={Chebyshev's bias in function fields},
  journal={Compos. Math.},
  volume={144},
  date={2008},
  number={6},
  pages={1351--1374},
}

\bib{Cha11}{article}{
  author={Cha, Byungchul},
  title={The summatory function of the M\"obius function in global function fields},
  date={2011},
  eprint={arXiv:1008.4711v2},
  status={submitted},
}

\bib{CFJ}{article}{
  author={Cha, Byungchul},
  author={Fiorilli, Daniel},
  author={Jouve, Florent},
  title={Independence of the zeros of elliptic curve $L$-functions over function fields},
  date={2015},
  eprint={arXiv:1502.05294},
  status={preprint},
}

\bib{Che53}{article}{
  author={Chebyshev, Pafnuti\u \i \ L\cprime vovich},
  title={Lettre de M. le professeur Tch\'{e}bychev \`{a} M. Fuss sur un nouveau th\'{e}or\`{e}me relatif aux nombres premiers contenus dans les formes $4n+1$ et $4n+3$},
  journal={Bull. Classe Phys. Acad. Imp. Sci. St. Petersburg},
  volume={11},
  date={1853},
  pages={208},
}

\bib{Del77}{book}{
   author={Deligne, P.},
   title={Cohomologie \'etale},
   series={Lecture Notes in Mathematics, Vol. 569},
   note={S\'eminaire de G\'eom\'etrie Alg\'ebrique du Bois-Marie SGA
   4${1\over 2}$;
   Avec la collaboration de J. F. Boutot, A. Grothendieck, L. Illusie et J.
   L. Verdier},
   publisher={Springer-Verlag},
   place={Berlin},
   date={1977},
   pages={iv+312pp},
}

\bib{Del80}{article}{
  author={Deligne, Pierre},
  title={La conjecture de Weil. II},
  language={French},
  journal={Inst. Hautes \'Etudes Sci. Publ. Math.},
  number={52},
  date={1980},
  pages={137--252},
}

\bib{Ess45}{book}{
    AUTHOR = {Esseen, Carl-Gustav},
     TITLE = {Fourier analysis of distribution functions. {A} mathematical study of the {L}aplace-{G}aussian law},
   JOURNAL = {Acta Math.},
  FJOURNAL = {Acta Mathematica},
    VOLUME = {77},
      YEAR = {1945},
     PAGES = {1--125},
}

\bib{FGH07}{article}{
   author={Farmer, David W.},
   author={Gonek, S. M.},
   author={Hughes, C. P.},
   title={The maximum size of $L$-functions},
   journal={J. Reine Angew. Math.},
   volume={609},
   date={2007},
   pages={215--236},
   doi={10.1515/CRELLE.2007.064},
}

\bib{Fio12}{article}{
   author = {Fiorilli, Daniel},
    title = {Highly biased prime number races},
   journal={Algebra Number Theory},
     status = {to appear},
}
  
\bib{Fio13}{article}{
   author = {Fiorilli, Daniel},
    title = {Elliptic curves of unbounded rank and Chebyshev's bias},
   journal={Int. Math. Res. Not. IMRN},
     date = {2014},
   number={18},
   pages={4997--5024},
}

\bib{FM13}{article}{
   author={Fiorilli, Daniel},
   author={Martin, Greg},
   title={Inequities in the Shanks-R\'enyi prime number race: an asymptotic
   formula for the densities},
   journal={J. Reine Angew. Math.},
   volume={676},
   date={2013},
   pages={121--212},
}

\bib{Go}{article}{
   author={Goldfeld, Morris},
   title={Artin's conjecture on the average},
   journal={Mathematika},
   volume={15},
   date={1968},
   pages={223--226},
}

\bib{Jou09}{article}{
   author={Jouve, Florent},
   title={Maximal Galois group of $L$-functions of elliptic curves},
   journal={Int. Math. Res. Not. IMRN},
   date={2009},
   number={19},
   pages={3557--3594},
   doi={10.1093/imrn/rnp066},
}

\bib{Jou10}{article}{
   author={Jouve, Florent},
   title={The large sieve and random walks on left cosets of arithmetic
   groups},
   journal={Comment. Math. Helv.},
   volume={85},
   date={2010},
   number={3},
   pages={647--704},
} 

\bib{Kat02}{book}{
  author={Katz, Nicholas M.},
  title={Twisted $L$-functions and monodromy},
  series={Annals of Mathematics Studies},
  volume={150},
  publisher={Princeton University Press},
  place={Princeton, NJ},
  date={2002},
  pages={viii+249},
}

\bib{Kat12}{article}{
   author={Katz, Nicholas M.},
   title={Report on the irreducibility of $L$-functions},
   conference={
      title={Number theory, analysis and geometry},
   },
   book={
      publisher={Springer},
      place={New York},
   },
   date={2012},
   pages={321--353},
}

\bib{Kow08}{article}{
   author={Kowalski, Emmanuel},
   title={The large sieve, monodromy, and zeta functions of algebraic
   curves. II. Independence of the zeros},
   journal={Int. Math. Res. Not. IMRN},
   date={2008},
   pages={Art. ID rnn 091, 57},
}

\bib{Lar95}{article}{
   author={Larsen, M.},
   title={Maximality of Galois actions for compatible systems},
   journal={Duke Math. J.},
   volume={80},
   date={1995},
   number={3},
   pages={601--630},
   }

\bib{Maz08}{article}{
  author={Mazur, Barry},
  title={Finding meaning in error terms},
  journal={Bull. Amer. Math. Soc. (N.S.)},
  volume={45},
  date={2008},
  number={2},
  pages={185--228},
  doi={10.1090/S0273-0979-08-01207-X},
}

\bib{Puc}{article}{
    AUTHOR = {Puchta, J.-C.},
     TITLE = {On large oscillations of the remainder of the prime number theorems},
   JOURNAL = {Acta Math. Hungar.},
  FJOURNAL = {Acta Mathematica Hungarica},
    VOLUME = {87},
      YEAR = {2000},
    NUMBER = {3},
     PAGES = {213--227},
}

\bib{Ros02}{book}{
   author={Rosen, Michael},
   title={Number theory in function fields},
   series={Graduate Texts in Mathematics},
   volume={210},
   publisher={Springer-Verlag},
   place={New York},
   date={2002},
   pages={xii+358},
}

\bib{RS94}{article}{
  author={Rubinstein, Michael},
  author={Sarnak, Peter},
  title={Chebyshev's bias},
  journal={Experiment. Math.},
  volume={3},
  date={1994},
  number={3},
  pages={173--197},
}

\bib{Sar07}{article}{
  author={Sarnak, Peter},
  title={Letter to Barry Mazur on Chebyshev's bias for $\tau (p)$},
  date={2007},
  eprint={http://publications.ias.edu/sarnak/},
}

\bib{Ser79}{book}{
   author={Serre, Jean-Pierre},
   title={Local fields},
   series={Graduate Texts in Mathematics},
   volume={67},
   note={Translated from the French by Marvin Jay Greenberg},
   publisher={Springer-Verlag, New York-Berlin},
   date={1979},
   pages={viii+241},
}
\bib{Ulm02}{article}{
  author={Ulmer, Douglas},
  title={Elliptic curves with large rank over function fields},
  journal={Ann. of Math. (2)},
  volume={155},
  date={2002},
  number={1},
  pages={295--315},
  issn={0003-486X},
  doi={10.2307/3062158},
}

\bib{Ulm05}{article}{
  author={Ulmer, Douglas},
  title={Geometric non-vanishing},
  journal={Invent. Math.},
  volume={159},
  date={2005},
  number={1},
  pages={133--186},
  issn={0020-9910},
  doi={10.1007/s00222-004-0386-z},
}

\bib{Ulm11}{article}{
   author={Ulmer, Douglas},
   title={Elliptic curves over function fields},
   conference={
      title={Arithmetic of $L$-functions},
   },
   book={
      series={IAS/Park City Math. Ser.},
      volume={18},
      publisher={Amer. Math. Soc.},
      place={Providence, RI},
   },
   date={2011},
   pages={211--280},
}

\end{biblist} 
\end{bibdiv} 
\end{document}